\newtheorem{theorem}{Theorem}
\newtheorem{proposition}[theorem]{Proposition}
\newtheorem{corollary}[theorem]{Corollary}
\newtheorem{definition}[theorem]{Definition}
\newtheorem{lemma}[theorem]{Lemma}
\newtheorem{remark}[theorem]{Remark}
\newcommand{\Z}{\mathbb{Z}}
\newcommand{\Q}{\mathbb{Q}}
\newcommand{\R}{\mathbb{R}}
\newcommand{\C}{\mathbb{C}}
\newcommand{\N}{\mathbb{N}}
\newcommand{\A}{\mathcal{A}}
\newcommand{\s}{\mathcal{S}}
\newcommand{\m}{\mathrm{m}}
\newcommand{\e}{\mathrm{exp}}
\newcommand{\re}{\mathrm{Re}}
\newcommand{\im}{\mathrm{Im}}
\newcommand{\ie}{{\it{i.$\,$e.\ }}}
\newcommand{\Ok}{E(k)_+}
\newcommand{\Log}{\mathrm{Log}}
\newcommand{\LOG}{\mathrm{LOG}}
\newcommand{\mo}{\mathrm{mod} \ }
\newcommand{\mi}{\C^*\times\R_+^r}
\newcommand{\mip}{\C^*\times\R_+^{r-1}}
\newcommand{\ldeg}{\mathrm{locdeg}}
\begin{document}

\title{Signed Shintani cones for number fields with one complex place}
\author{Milton Espinoza}
\email{milton.espinoza@uv.cl}
\address{Instituto de Matem\'aticas,
Facultad de Ciencias,
   Universidad de Valpara\'iso,
   Gran Breta\~na 1091, 3er piso, Valpara\'iso,
   Chile}
\subjclass[2010]{Primary 11R27, 11R42, 11Y40}
\keywords{Shintani cones, fundamental domain, units}
\thanks{This work was partially supported by the Chilean FONDECYT grants 1085153 and 1110277}
\thanks{I would like to address special thanks to my advisor, Eduardo Friedman}

\begin{abstract}
We give a signed fundamental domain for the action on $\C^*\times \R_+^{n-2}$ of the totally positive units $\Ok$ of a number field $k$ of degree $n$ and having exactly one pair of complex embeddings. This signed fundamental domain, built of $k$-rational simplicial cones, is as convenient as a true fundamental domain
for the purpose of studying Dedekind zeta functions. However, while there is no general construction of a true fundamental domain, we construct a
signed fundamental domain from any set of fundamental units of $k$.
\end{abstract}

\maketitle

\tableofcontents

\section{Introduction}
Motivated by the study of special values of $L$-functions over totally real number fields, Shintani introduced in 1976 \cite{Sh1} a geometric method that allowed him to write any partial zeta function of a totally real number field as a finite sum of certain Dirichlet series, which can be considered as a natural generalization of the Hurwitz zeta function.
Later \cite{Sh2} Shintani extended these results to general number fields. In order to enunciate Shintani's geometric method, fix a number field $k$ with $r$
real embeddings and $s$ pairs of complex embeddings (\ie $[k:\Q]=2s+r$), and let $E(k)$ be its group of units. Given a complete set $\tau_i:k\rightarrow\C$ $(1\leq i\leq s+r)$ of embeddings of $k$,
\begin{equation}\label{incrustaciones}
\underbrace{\tau_1,\overline{\tau}_1,\tau_2,\overline{\tau}_2,\dots,\tau_s,\overline{\tau}_s}_{\mathrm{complex \ embeddings}},\underbrace{\tau_{s+1},\tau_{s+2},\dots,\tau_{s+r}}_{\mathrm{real \ embeddings}},
\end{equation}
we can consider $k\subset\C^{s}\times\R^{r}$ by identifying $x\in k$ with
$$\big(x^{(1)},x^{(2)},\dots,x^{(s+r)}\big)\in\C^s\times\R^r,$$
where $x^{(i)}:=\tau_i(x)$. Put
$$\Ok:=E(k)\cap \big(\C^s\times \R_+^{r}\big) \qquad \mathrm{and} \qquad k_+:=k\cap\big((\C^*)^s\times \R_+^{r}\big),$$
where $\R_+^{r}:=(0,\infty)^r$. Then the group $\Ok$ of totally positive units of $k$ acts on $(\C^*)^s\times \R_+^{r}$ by component-wise multiplication, where $(\C^*)^s:=(\C\smallsetminus\{0\})^s$. On the other hand, if $v_1,v_2,\dots,v_d\in \C^s\times\R^r$ ($1\leq d\leq 2s+r$) is a set of
 $\R$-linearly independent vectors, we shall call
$$
C(v_1,v_2,\dots,v_d):=\{t_1v_1+t_2v_2+\dots+t_dv_d \ | \ t_i>0\}
$$
the \emph{$d$-dimensional simplicial cone generated by} $v_1,v_2,\dots,v_d$.

Shintani proved \cite[Proposition 2]{Sh2} that there exists a finite set $\{C_j \ | \ j\in J\}$ of simplicial cones, all with generators in $k_+$ (\ie $k$-rational),
such that
$$ (\C^*)^s\times \R_+^{r} = \bigcup_{j\in J} \ \bigcup_{\varepsilon\in\Ok}\varepsilon C_j \qquad (\mathrm{disjoint \ union}).$$
Equivalently, the finite disjoint union $\bigcup_{j\in J}C_j$ is a fundamental domain for the action of $\Ok$ on
$(\C^*)^s\times \R_+^{r}$. Note that this result does not provide any description of the cones involved.

When $k$ is a totally real number field, Colmez proved \cite{Co1}\cite{Co2} the existence of special units $\eta_1,\eta_2,\dots,\eta_{r-1}\in\Ok$ such that if we put
$$f_{1,\sigma}:=1 \qquad \mathrm{and} \qquad f_{j,\sigma}:=\eta_{\sigma(1)}\eta_{\sigma(2)}\dots\eta_{\sigma(j-1)} \quad (2\leq j\leq r),$$
for $\sigma$ in the symmetric group $S_{r-1}$, then the finite disjoint union
\begin{equation}\label{Colmezcones}
\big\{C_\sigma:=C(f_{1,\sigma},...,f_{r,\sigma}) \ | \ \sigma\in S_{r-1}\big\}
\end{equation}
(together with some boundary faces of the $C_\sigma$) is a fundamental domain of $\R_+^r$ under the action of the group $U$ generated by the $\eta_i$. Unfortunately, we do not know of any practical algorithm for finding these special units when $r\geq 4$.\footnote{See \cite{DF2} for the cubic case $r=3$.}

In 2012, D\'iaz y D\'iaz and Friedman \cite{DF1} removed this obstruction by considering signed fundamental domains. More precisely, if $\eta_1,...,\eta_{r-1}$ is any set of independent units in $\Ok$, then the Colmez cones $C_\sigma$, together with some boundary faces, form a signed fundamental domain for the action on $\R_+^r$ of the group $U$ generated by the $\eta_i$, \ie
\begin{equation}\label{ecuacionvirtual}
\sum_{\substack{w_\sigma=+1\\ \sigma\in S_{r-1}}} \, \sum_{z\in
C_\sigma\cap U \cdot x } w_\sigma\ +\ \sum_{\substack{w_\sigma=-1\\
\sigma\in S_{r-1}}} \,
 \sum_{z\in C_\sigma\cap U \cdot x } w_\sigma \ = \ 1\qquad\big( x\in \R_+^r\big),
\end{equation}
where all sums are over finite sets of cardinality bounded independently of $x$, and $w_\sigma=\pm 1$ is a sign associated to the cone $C_\sigma$.\footnote{In fact, in \cite{Co1}, the special units $\eta_i$
are characterized by the condition $w_\sigma=+1$ for all $\sigma\in S_{r-1}$.} These signed fundamental domains are as convenient as
true fundamental domains for computing partial zeta functions, but they have the advantage of being explicitly constructed from any set of independent units $\eta_1,...,\eta_{r-1}\in\Ok$. To prove their result, D\'iaz y D\'iaz and Friedman used topological degree theory on the quotient manifold $\R_+^r/\Ok$. In the following points we give an overview of such proof, as it helps to understand the present work.

\newcounter{itemcounter0}
\begin{list}
{\textbf{\arabic{itemcounter0}.}}
{\usecounter{itemcounter0}\leftmargin=1.4em}
\item Consider the multiplicative action of $U$ on half-lines $L\subset \R_+^r\cup\{0\}$ with initial point at the origin. Parameterize each $L$ by $y_L\in\R_+^{r-1}$, where $$\{(y_L,1)\}=L\cap\{x\in\R_+^r\,|\,x^{(r)}=1\}.$$
    Then the group $\widetilde{U}:=\langle\widetilde{\eta}_1,\dots,\widetilde{\eta}_{r-1}\rangle$ acts on $\R_+^{r-1}$ by multiplication, where
    $$\widetilde{\eta}_i\in\R_+^{r-1} \quad (1\le i\le r-1), \qquad\qquad \widetilde{\eta}_i^{(j)}:=\eta_i^{(j)}/\eta_i^{(r)} \quad (1\le j\le r-1).$$

\item For each $\sigma\in S_{r-1}$, let $c_\sigma\subset\R_+^{r-1}$ be the set of parameters of half-lines going through the Colmez cone $C_\sigma$ \big(see \eqref{Colmezcones}\big), \ie, $c_\sigma$ is the intersection of $C_\sigma$ with the hyperplane $\{x\in\R_+^r\,|\,x^{(r)}=1\}$. If $\{(c_\sigma,w_\sigma)\}_{\sigma\in S_{r-1}}$ is a signed fundamental domain for the action of $\widetilde{U}$ on $\R_+^{r-1}$, then $\{(C_\sigma,w_\sigma)\}_{S_{r-1}}$ is a signed fundamental domain for the action of $U$ on $\R_+^r$.

\item Let $I^{r-1}:=[0,1]^{r-1}$ be the unit hypercube of $r-1$ dimensions, and consider the usual simplicial decomposition of $I^{r-1}$ into $(r-1)!$ simplices,
    $$I^{r-1}=\bigcup_{\sigma\in S_{r-1}}D_\sigma, \qquad\qquad D_\sigma:=\{y\in I^{r-1} | y^{(\sigma(r-1))}\le\dots\le y^{(\sigma(1))}\}.$$
    There exist two continuous functions $f,f_0:I^{r-1}\to\R_+^{r-1}$ such that
    \begin{enumerate}[(a)]
    \item $f$ is a piecewise affine map that maps $D_\sigma$ onto the closure of $c_\sigma$ for each $\sigma\in S_{r-1}$. The function $f_0$ maps $I^{r-1}$ onto the closure of a fundamental domain for the action of $\widetilde{U}$ on $\R_+^{r-1}$; this fundamental domain is easy to describe but it is not of the form we want.
    \item $f$ and $f_0$ induce homotopic functions $F,F_0:\widehat{T}\to T$ between two tori; $\widehat{T}=I^{r-1}/\!\sim$, with $y\sim y+e_i$ whenever $y,y+e_i\in I^{r-1}$, where $e_i$ is the $i^{\mathrm{th}}$ standard basis vector of $\R^{r-1}$; and $T=\R_+^{r-1}/\widetilde{U}$. Moreover, $F_0$ is a homeomorphism of (global) topological degree $\deg(F_0)=\deg(F)=\pm1$.
    \end{enumerate}

\item Equation \eqref{ecuacionvirtual}, with $C_\sigma$, $U$ and $\R_+^r$ replaced respectively by $c_\sigma$, $\widetilde{U}$ and $\R_+^{r-1}$,
    follows from interpreting the left hand side as a sum of local degrees of $F$ divided by $\deg(F)$ (local-global principle of topological degree theory). Hence $\{(c_\sigma,w_\sigma)\}_{\sigma\in S_{r-1}}$ is a signed fundamental domain for the action of $\widetilde{U}$ on $\R_+^{r-1}$, and the main result of \cite{DF1} follows from point 2.
\end{list}

When $k$ is not totally real, our knowledge of explicit fundamental domains is very limited. There are some examples in a paper \cite{RS} of Sczech and Ren, who found explicit cones
to give numerical evidence of their refinement of Stark's conjecture over complex cubic number fields. A more general approach can be found in
\cite{Ok}, where explicit cones are presented for the field given by the polynomial $X^3+kX-1$. We know of no results for non totally real fields of degree four or more.

The aim of this work is extend the results of \cite{DF1} to number fields $k$ having exactly one complex place. In extending the topological approach of \cite{DF1}, we find two obstructions. The first one is that we have to choose some elements in $k_+$ to generate $[k:\Q]$-dimensional cones together with the given units, unlike the totally real case where the given units provide all the generators for the $r$-dimensional Colmez cones since the rank of the unit group is $r-1$. The other obstruction is that $\mi$ is a non-convex set; this restricts our choice of generators for the cones, which are convex subsets, and also adds considerable technical difficulty to the use of topological degree theory, because there is no obvious way to construct homotopies having the properties we need in a non-convex set. After overcoming these obstructions, our proof will follow the same lines of \cite{DF1} described in the above overview.

To get an idea of our construction, suppose that $k$ is a complex cubic number field, and that $\varepsilon=(\varepsilon^{(1)},\varepsilon^{(2)})\in\C^*\times\R_+$ is a totally positive unit of $k$ of infinite order. Put $\widetilde{\varepsilon}:=\varepsilon^{(1)}/\varepsilon^{(2)}\in\C^*$ and assume $|\widetilde{\varepsilon}|>1$; as in \cite{DF1}, in order to get a signed fundamental domain (built of simplicial cones) for the action of $\langle\varepsilon\rangle$ on $\C^*\times\R_+$, it is sufficient to find a signed fundamental domain (built of triangles) for the action of $\langle\widetilde{\varepsilon}\rangle$ on $\C^*$. For each $\ell=0,1,2$, choose $\alpha_\ell\in\C^*$ such that $\alpha_\ell/|\alpha_\ell|=\e(2\pi i\ell/3)$, and let $\Delta$ be the triangle with vertices $\alpha_0$, $\alpha_1$, $\alpha_2$. We can order the vertices of $\Delta$ and $\widetilde{\varepsilon}\Delta$ by ordering their arguments in $[0,2\pi)$ counterclockwise; of course this depends on $\widetilde{\varepsilon}$. Suppose we get $\alpha_0 < \widetilde{\varepsilon}\alpha_2 < \alpha_1 < \widetilde{\varepsilon}\alpha_0 < \alpha_2 < \widetilde{\varepsilon}\alpha_1;$
if we put
\begin{align*}
&V_1=\{\widetilde{\varepsilon}\alpha_0 , \alpha_2 , \widetilde{\varepsilon}\alpha_1\}, \qquad
&&V_2=\{\alpha_2 , \widetilde{\varepsilon}\alpha_1 , \alpha_0\}, \qquad
&&V_3=\{\widetilde{\varepsilon}\alpha_1 , \alpha_0 , \widetilde{\varepsilon}\alpha_2\}, \\
&V_4=\{\alpha_0 , \widetilde{\varepsilon}\alpha_2 , \alpha_1\}, \qquad
&&V_5=\{\widetilde{\varepsilon}\alpha_2 , \alpha_1 , \widetilde{\varepsilon}\alpha_0\}, \qquad
&&V_6=\{\alpha_1 , \widetilde{\varepsilon}\alpha_0 , \alpha_2\},
\end{align*}
then the triangle $\Delta_\ell$ with vertices $V_\ell$ does not contain the origin for each $\ell=1,\dots,6$, since its vertices lie in a convex subset of $\C^*$. Looking at $\Delta_1$, we deduce that there is a unique $d\in\Z$ such that \ $\arg(\widetilde{\varepsilon})$, \ $2\pi d-2\pi/3$, \ and \ $\arg(\widetilde{\varepsilon})+2\pi/3$ \ lie in an interval of length less than $\pi$, where $\arg(z)$ represents the argument of $z\in\C^*$ in the range $[-\pi,\pi)$. Consider the following elements of $\R^2$:
\begin{align*}
&\phi_{\alpha_2}=(0,d-1/3),  &&\phi_{\alpha_0}=(0,d),  &&\phi_{\alpha_1}=(0,d+1/3), &&\overline{\phi}_{\alpha_2}=(0,d+2/3),\\
&\phi_{\widetilde{\varepsilon}\alpha_0}=(1,0),  &&\phi_{\widetilde{\varepsilon}\alpha_1}=(1,1/3),  &&\phi_{\widetilde{\varepsilon}\alpha_0}=(1,2/3), &&\overline{\phi}_{\widetilde{\varepsilon}\alpha_0}=(1,1);
\end{align*}
also put
\begin{align*}
&V'_1=\{\phi_{\widetilde{\varepsilon}\alpha_0},\phi_{\alpha_2},\phi_{\widetilde{\varepsilon}\alpha_1}\}, \qquad
&&V'_2=\{\phi_{\alpha_2},\phi_{\widetilde{\varepsilon}\alpha_1},\phi_{\alpha_0}\}, \qquad
&&V'_3=\{\phi_{\widetilde{\varepsilon}\alpha_1},\phi_{\alpha_0},\phi_{\widetilde{\varepsilon}\alpha_0}\}, \\
&V'_4=\{\phi_{\alpha_0},\phi_{\widetilde{\varepsilon}\alpha_0},\phi_{\alpha_1}\}, \qquad
&&V'_5=\{\phi_{\widetilde{\varepsilon}\alpha_0},\phi_{\alpha_1},\overline{\phi}_{\widetilde{\varepsilon}\alpha_0}\}, \qquad
&&V'_6=\{\phi_{\alpha_1},\overline{\phi}_{\widetilde{\varepsilon}\alpha_0},\overline{\phi}_{\alpha_2}\};
\end{align*}
and let $\Delta'_\ell$ be the triangle with vertices $V'_\ell$ for each $\ell=1,\dots,6$. If $D$ is the union of all the $\Delta'_\ell$, then $D$ is the closure of a fundamental domain for $\R^2$ under the translation action of its subgroup $\Z^2$, and the $\Delta'_\ell$ form a simplicial decomposition of $D$. Thus we can define a piecewise affine map $f:D\to\C^*$ by
$$f(\overline{\phi}_{\widetilde{\varepsilon}\alpha_0})=\widetilde{\varepsilon}\alpha_0, \qquad f(\overline{\phi}_{\alpha_2})=\alpha_2, \qquad f(\phi_v)=v \quad (\text{for each vertex $v$ of $\Delta$ and $\widetilde{\varepsilon}\Delta$}).$$
Now, the set $\mathcal{F}=\{z\in\C^*\,|\,1\le|z|<|\widetilde{\varepsilon}|\}$ is an obvious fundamental domain for the action of $\langle\widetilde{\varepsilon}\rangle$ on $\C^*$, and the function $f_0:D\to \C^*$ defined by
$$f_0(t,\theta)=\widetilde{\varepsilon}^t\e(2\pi i\theta) \qquad\qquad \big((t,\theta)\in D\big),$$
has image the closure of $\mathcal{F}$; here complex powers are defined by the principal branch of the logarithm. One verifies that $f$ and $f_0$ are homotopic through the homotopy
$$g_\lambda(t,\theta)=\lambda f(t,\theta)+(1-\lambda)f_0(t,\theta) \qquad\qquad (\lambda\in I, \ (t,\theta)\in D);$$
this homotopy is well defined because $f(\Delta'_\ell)$ and $f_0(\Delta'_\ell)$ are contained in a (same) convex subset of $\C^*$ for each $\ell=1,\dots,6$. Furthermore, $f$, $f_0$, and $g_\lambda$ descend to continuous maps between the tori $\widehat{T}=D/\!\sim$ and $T=\C^*/\langle\widetilde{\varepsilon}\rangle$, where $\sim$ identifies points of $D$ lying in the same orbit with respect to the translation action of $\Z^2$ on $\R^2$. This means that the maps between $\widehat{T}$ and $T$ induced by $f$ and $f_0$ are homotopic. From this point forward, our proof follows the same lines of \cite{DF1}. Note that in this case $\alpha_0$, $\alpha_1$, and $\alpha_2$ are not necessarily elements coming from $k_+$; this is a minor problem which will be solved by choosing elements of $k_+$ not ``too far'' from the $\alpha_\ell$.

We are very grateful to the referees for helping us to improve the exposition of this article and for encouraging us to enhance this introduction with an overview of our construction.

\section{The signed fundamental domain}

From now on we assume $r:=\mathrm{rank}\big(\Ok\big)=[k:\Q]-2>0$. Fix a set of independent units $\varepsilon_1,\dots,\varepsilon_r\in\Ok$, and let $V\subset\Ok$ be the subgroup they generate. Following Colmez \cite{Co1}, define
\begin{equation}\label{ftsigma}
f_{t,\sigma} := \varepsilon_{\sigma(1)} \varepsilon_{\sigma(2)}\cdots\;
\varepsilon_{\sigma(t-1)}=\prod_{j=1}^{t-1}
 \varepsilon_{\sigma(j)}\qquad\ (1\le t\le r+1,
 \ \,\sigma\in S_r).
\end{equation}
For $t=1$ we mean $f_{1,\sigma}:=1=(1,1,\dots,1)\in \C^*\times\R^r_+$. Thus $f_{t,\sigma}\in\Ok\subset\C^*\times\R^r_+$. Define
\begin{equation}\label{xitt'sigma}
\xi_\sigma(t,t'):=\tau_1(f_{t,\sigma}^{-1}f_{t',\sigma})\in\C^*  \qquad\qquad (1\le t,t'\le r+1,\ \,\sigma\in S_r),
\end{equation}
where $\tau_1$ is a fixed complex embedding of $k$ \big(see \eqref{incrustaciones}\big). When $t=r+1$ in \eqref{xitt'sigma}, we will write
\begin{equation}\label{xitsigma}
\xi_\sigma(t'):=\xi_\sigma(r+1,t') \qquad\qquad (1\le t'\le r+1,\ \,\sigma\in S_r).
\end{equation}
Note that for all $1\le t,t',t''\le r+1$ and all $\sigma\in S_r$ we have
\begin{equation}\label{xitsigmaprop}
\xi_\sigma(t,t')^{-1}=\xi_\sigma(t',t), \qquad \xi_\sigma(t,t')\cdot\xi_\sigma(t'',t)=\xi_\sigma(t'',t'), \qquad \xi_\sigma(t)\cdot\xi_\sigma(t')^{-1}=\xi_\sigma(t',t).
\end{equation}

Let $\arg(z)$ be the argument in the interval $[-\pi,\pi)$ of the nonzero complex number $z$. For a fixed integer $N\ge3$, let $\m=\m_N:\C^*\rightarrow \Z$ be the function defined by
\begin{equation}\label{m}
\m(z):=\left\lceil\frac{-N\arg(z)}{2\pi} \right\rceil  \qquad (z\in\C^*), \qquad -\frac{N}{2}<\m(z)\leq \left\lceil\frac{N}{2}\right\rceil,
\end{equation}
where the ceiling function $\lceil \ \rceil:\R\to \Z$ satisfies $x\leq\lceil x\rceil<x+1$. Then, for $\sigma\in S_r$ and $t,t'\in\{1,\dots,r+1\}$, consider the next three conditions
\begin{align}
& \m(\xi_\sigma(t,t'))\equiv\m(\xi_\sigma(t'))-\m(\xi_\sigma(t)) \ (\mo N), \label{orden1}\\
& \m(\xi_\sigma(t,t'))+\m(\xi_\sigma(t',t))\equiv1 \ (\mo N), \label{orden2}\\
& t'<t. \label{orden3}
\end{align}
We shall say that $t\prec_\sigma t'$ if and only if the pair $(t,t')$ satisfies condition \eqref{orden1}, and at least one of the conditions \eqref{orden2} and \eqref{orden3}. In Proposition~\ref{ordentotal} we will prove, for $\sigma\in S_r$, that the relation $\prec_\sigma$ is a strict total order on the set $\{1,\dots,r+1\}$. Also, in Lemma~\ref{mprop} we will prove that $\m(\xi_\sigma(t,t'))$ is congruent modulo $N$ to either $\m(\xi_\sigma(t'))-\m(\xi_\sigma(t))$ or $\m(\xi_\sigma(t'))-\m(\xi_\sigma(t))+1$.

Finally, let $\widetilde{S}_r$ be the product of sets
$$S_r\times\{1,\dots,r+1\}\times\{0,\dots,N-1\}$$
with cardinality $\big([k:\Q]-1\big)!\cdot N$.

\subsection{The seven-step algorithm}

With the above conventions and definitions, the following seven steps produce a signed fundamental domain of D\'iaz y D\'iaz--Friedman type \big(see \eqref{ecuacionvirtual}\big) for the action of the group $V$ on $\C^*\times\R^r_+$.

\newcounter{itemcounter}
\begin{list}
{\textbf{\arabic{itemcounter}.}}
{\usecounter{itemcounter}\leftmargin=1.4em}
\item Fix an integer $N\geq 3$, and consider the function $\m=\m_N$ defined in \eqref{m}.

\item For each $\sigma\in S_r$, order the set $\{1,\dots,r+1\}$ using the strict total order $\prec_\sigma$ defined by conditions \eqref{orden1}, \eqref{orden2} and \eqref{orden3}.

\item For each $\sigma\in S_r$, let $\rho_\sigma\in S_{r+1}$ be the unique permutation such that
\begin{equation}\label{rosigma}
\rho_\sigma(r+1)\prec_\sigma\rho_\sigma(r)\prec_\sigma\dots\prec_\sigma\rho_\sigma(2)\prec_\sigma\rho_\sigma(1).
\end{equation}

\item For each $t\in\Z$, choose and fix an element $\alpha_t=\alpha(t)\in k_+$ such that
$$
\alpha_t=\alpha_{t'} \quad \mathrm{if} \quad t\equiv t'(\mo N), \qquad
\arg\!\left(\alpha_t^{(1)}\cdot\e\left(-2\pi i t/N\right)\right)\in\left(\frac{-\pi}{2N},\frac{\pi}{2N}\right).
$$

\item Let $\mu=(\sigma, q, n)\in \widetilde{S}_r$. For $t\in\{1,\dots,r+1\}$, write
\begin{equation}\label{ftsigmaqj}
f_{t,\mu}=f(t,\sigma,q,n):=\begin{cases}
f_{t,\sigma}\cdot\alpha\big(\m(\xi_\sigma(t))+n\big) & \mathrm{if} \ t\nprec_\sigma\rho_\sigma(q), \\
f_{t,\sigma}\cdot\alpha\big(\m(\xi_\sigma(t))+n+1\big) & \mathrm{if} \ t\prec_\sigma\rho_\sigma(q),
\end{cases}
\end{equation}
and for $t=r+2$ write
\begin{equation}\label{ftsigmaqj2}
f_{t,\mu}=f(t,\sigma,q,n):=f_{\rho_\sigma(q),\sigma}\cdot\alpha\!\Big(\m\!\Big(\xi_\sigma\big(\rho_\sigma(q)\big)\Big)+n+1\Big).
\end{equation}

\item For $\mu=(\sigma,q,n)\in \widetilde{S}_r$, define $w_{\mu}=\pm 1$ or $0$ as
\begin{equation}\label{wsigmaqn}
w_{\mu}:=\frac{\mathrm{sgn}(\sigma)
\cdot\mathrm{sign}\big(\!\det(f_{1,\mu}\,,\,f_{2,\mu}\,,\,
\dots\,,\,f_{r+2,\mu})\big)
}{\mathrm{sign}\big(\!\det(\Log\,\,\varepsilon_1,
\Log\,\,\varepsilon_2,\dots,\Log\,\,\varepsilon_r)\big)},
\end{equation}
where $\mathrm{sgn}(\sigma)$ is the usual signature (\ie$\pm1$) of the permutation $\sigma\in S_r$,
$$\Log\,\,\varepsilon_i\in \R^r\qquad \mathrm{with} \qquad\big( \Log\,\,\varepsilon_i  \big)^{(j)}:= \log \, |\varepsilon_i^{(j)}|
\quad \, (1\le j\le r),
$$
the $f_{i,\mu}$ are regarded as elements of $\R^{r+2}$ by the map
\begin{equation}\label{Psi}
(z,x^{(1)},\dots,x^{(r)})\mapsto\big(\re(z),\im(z),x^{(1)},\dots,x^{(r)}\big) \qquad (z\in\C,\,x^{(i)}\in\R),
\end{equation}
and $\,\mathrm{sign}\big(\!\det(v_1,v_2,\dots,v_\ell)\big)$ is the sign of the determinant of the $\ell\times \ell$ real matrix whose columns are the $v_i$.

\item For each $\mu\in \widetilde{S}_r$ with $w_{\mu}\not=0$, consider the real hyperplanes
\begin{equation}\label{hiperplano}
H_{i,\mu}:=\sum_{\substack{1\le t\le r+2\\ t\not=i}}\R\cdot f_{t,\mu} \qquad\qquad (1\le i\le r+2),
\end{equation}
each of which separates $\C\times\R^r$ into two disjoint half-spaces, $\C\times\R^r=H_{i,\mu}^+\cup H_{i,\mu} \cup H_{i,\mu}^-$, where $H_{i,\mu}^+$ is the half-space containing $f_{i,\mu}$. Then define $C_{\mu}=C_\mu(\varepsilon_1,\varepsilon_2,\dots,\varepsilon_r)$ by
\begin{align}\label{Csigmaqn}
\nonumber
C_{\mu}&:=\R_{1,\mu}\cdot f_{1,\mu}+\R_{2,\mu}\cdot f_{2,\mu}+ \cdots+\R_{r+2,\mu}\cdot f_{r+2,\mu},\\
\R_{i,\mu}&:=\begin{cases}
[0,\infty) & \mathrm{if} \ e_{r+2}\in H_{i,\mu}^+, \\
(0,\infty) & \mathrm{if} \ e_{r+2}\in H_{i,\mu}^-,
\end{cases} \qquad\qquad(1\le i\le r+2),
\end{align}
with $e_{r+2}:=[0,0,\dots,0,1]\in\mi$.
\end{list}

\noindent\emph{Some remarks}. The choice $N=3$ in the first step of the algorithm generates the minimum number of cones, namely $([k:\Q]-1)!\cdot3$. Also note that $N$, as well as the $\alpha_t$ chosen in the fourth step, are not included in the posterior notation since they remain fixed along the whole algorithm. In step five, we clearly have $f_{t,\mu}\in k_+\subset\C^*\times\R^r_+$ for all $t\in\{1,\dots,r+2\}$. In step six, note that the absolute value of the determinant in the denominator of \eqref{wsigmaqn} is half of the regulator of the independent units $\varepsilon_1,\varepsilon_2,\dots,\varepsilon_r$, and so is non-zero. Also, in the following when identify $\C\times \R^\ell = \R^{\ell+2}$ as an $\R$-vector space, we will be referring to the isomorphism \eqref{Psi} with $r=\ell$. Finally, the definitions given in the seventh step of the algorithm make sense since if $w_\mu\not=0$, then each closed cone $\overline{C}_{\mu}:=\sum_{t=1}^{r+2}\R_{\ge0}\cdot f_{t,\mu}$ has a non-empty interior; furthermore, in Lemma~\ref{lema49} we will prove that $e_{r+2}$ cannot lie in any of the $H_{i,\mu}$.

\noindent We will call the above algorithm the \emph{seven-step algorithm} (7SA). It produces our main result.

\begin{theorem}\label{Main} Let $k$ be a number field with $r>0$ real embeddings, and exactly one pair of conjugate complex embeddings. Suppose that
the units $\varepsilon_1,\dots,\varepsilon_r$ generate a subgroup $V$ of finite index in the group of totally positive units of $k$. Then the signed cones $\big\{(C_{\mu}, w_{\mu})\big\}_{ w_{\mu}\not=0}$ defined in \eqref{wsigmaqn} and \eqref{Csigmaqn} give a signed fundamental domain for the
action of $V$ on $\C^*\times\R_+^r:=\big(\C\smallsetminus\{0\}\big)\times (0,\infty)^r$. That is,
\begin{equation}\label{Basic}
\sum_{\substack{w_{\mu}=+1\\ \mu\in \widetilde{S}_r}} \, \sum_{z\in
C_{\mu}\cap V \cdot x } w_{\mu}\ +\ \sum_{\substack{w_{\mu}=-1\\ \mu\in \widetilde{S}_r}} \,
 \sum_{z\in C_{\mu}\cap V \cdot x } w_{\mu} \ = \ 1\qquad\big( x\in \mi),
\end{equation}
where all sums are over finite sets of cardinality bounded independently of $x$.
\end{theorem}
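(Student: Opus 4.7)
\emph{Radial reduction.} The plan is to adapt the topological-degree strategy of \cite{DF1}. First, I would reduce the problem by parametrising half-lines through $0\in\C\times\R^r$ by their intersection with the affine hyperplane $\{x : x^{(r+1)} = 1\}$, which identifies the half-lines meeting $\C^*\times\R_+^r$ with $\mip$. Under this identification the action of $V$ becomes the action on $\mip$ of the group $\widetilde V := \langle \widetilde\varepsilon_1,\dots,\widetilde\varepsilon_r\rangle$, where $\widetilde\varepsilon_i^{(j)} := \varepsilon_i^{(j)}/\varepsilon_i^{(r+1)}$, and each full-dimensional cone $C_\mu$ corresponds to a simplex $c_\mu$ in $\mip$. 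As in point~2 of the overview of \cite{DF1}, a signed-fundamental-domain statement for the $c_\mu$ inside $\mip$ implies \eqref{Basic} for the $C_\mu$ inside $\C^*\times\R_+^r$.

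\emph{Two model maps on a combinatorial domain.} To prove the reduced statement I would construct a compact polytope $D \subset \R^{r+1}$ whose interior is a fundamental domain for some translation action of $\Z^{r+1}$, equipped with a simplicial decomposition $\{D_\mu\}$ indexed by $\{\mu \in \widetilde S_r : w_\mu \neq 0\}$; each $D_\mu$ is an $(r+1)$-simplex with $r+2$ vertices $\phi_{f_{t,\mu}}$ placed at prescribed lattice-rational points so that the decomposition glues coherently across the translation-identified faces of $D$. Two maps then enter: the piecewise-affine $f : D \to \mip$ defined by $\phi_{f_{t,\mu}} \mapsto$ (the cross-section projection of $f_{t,\mu}$), so that $f(D_\mu)$ is the closure of $c_\mu$; and an explicit monomial-exponential map $f_0 : D \to \mip$ generalising the formula $(t,\theta) \mapsto \widetilde\varepsilon^t \e(2\pi i \theta)$ from the cubic example in the introduction, with image the closure of an elementary fundamental domain for $\widetilde V$.

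\emph{Homotopy via convex enclosures (the main obstacle).} The central step is to descend $f$ and $f_0$ to continuous maps $F, F_0 : \widehat T \to T$ on the tori $\widehat T := D/\!\sim$ and $T := \mip/\widetilde V$, and to connect them by the straight-line homotopy $g_\lambda = \lambda f + (1-\lambda) f_0$. Unlike the totally real case of \cite{DF1}, the target $\mip$ is not convex, because $0$ is missing from the $\C$-factor, so an interpolating segment could \textit{a priori} cross the forbidden axis. The whole combinatorial apparatus preceding the 7SA is designed to prevent exactly this: the parameter $N \ge 3$, the anchors $\alpha_t \in k_+$ with $\arg(\alpha_t^{(1)})$ lying within $\pi/(2N)$ of $2\pi t/N$, the ordering $\prec_\sigma$ of Proposition~\ref{ordentotal}, and the shift by $0$ or $+1$ in the formulas \eqref{ftsigmaqj}--\eqref{ftsigmaqj2} together force the $\C^*$-projections of the $r+2$ vertices $f_{t,\mu}$ of a fixed simplex to lie in an arc of length less than $\pi$; hence $f(D_\mu)$, $f_0(D_\mu)$ and $g_\lambda(D_\mu)$ all sit inside a convex open half-plane in the $\C^*$ factor (times $\R_+^{r-1}$), so $g_\lambda$ stays in $\mip$ and descends to a well-defined homotopy $G_\lambda : \widehat T \to T$. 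A direct calculation shows $F_0$ is a homeomorphism with $\deg(F_0) = \pm 1$ (the sign matching the denominator of \eqref{wsigmaqn}), whence $\deg(F) = \pm 1$ as well.

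\emph{Conclusion via local-global degree.} With the above in place, the local-global principle of topological degree theory gives, for any regular value $[x]\in T$ of $F$,
\begin{equation*}
\deg(F) \;=\; \sum_{\substack{\mu \in \widetilde S_r \\ w_\mu \neq 0}} \;\; \sum_{y \in F^{-1}([x]) \,\cap\, \mathrm{int}(D_\mu)} \ldeg_y(F).
\end{equation*}
Since $f|_{D_\mu}$ is affine, $\ldeg_y(F)$ is the sign of the Jacobian of $f|_{D_\mu}$ combined with the orientation sign of $D_\mu$ inside $D$ (the $\mathrm{sgn}(\sigma)$ factor); rewriting via \eqref{Psi} and dividing by $\deg(F_0)$ recovers exactly $w_\mu$ as defined in \eqref{wsigmaqn}. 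Lemma~\ref{lema49} ensures $e_{r+2}\notin H_{i,\mu}$, so that preimages in $\mathrm{int}(D_\mu)$ biject with points $z \in c_\mu \cap \widetilde V \cdot x$; undoing the radial reduction then yields \eqref{Basic}, with non-regular $x$ handled by the boundary conventions of \eqref{Csigmaqn} or a standard approximation. The main technical difficulty is squarely in paragraph three: verifying simultaneously that the candidate $D$ glues to a torus, that $f$ is continuous across shared faces of the $D_\mu$, and that every $g_\lambda(D_\mu)$ avoids the missing axis in $\mip$.
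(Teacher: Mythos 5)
Your proposal follows essentially the same strategy as the paper's proof: radial reduction to $\mip$ acted on by $\widetilde V$, a simplicial torus $\widehat T = D/\!\sim$ with a piecewise-affine $f$ and an explicit monomial-exponential $f_0$, the straight-line homotopy made valid by forcing each $f(D_\mu)\cup f_0(D_\mu)$ into a convex region of the form $\s_k\times\R_+^{r-1}$, and the local-global degree identity matching $w_\mu$ with $\ldeg/\deg(F_0)$. The only small imprecision is that the simplicial decomposition $\{D_\mu\}$ of $D$ must be indexed by \emph{all} of $\widetilde S_r$, not just by $\{\mu: w_\mu\neq 0\}$; the $D_\mu$ with $w_\mu=0$ are still nondegenerate $(r+1)$-simplices (by Lemma~\ref{phiind}) that $f$ collapses onto lower-dimensional cells, and they are needed to cover $D$ for the homotopy and degree arguments even though they contribute nothing to the final signed domain.
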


\subsection{Corollaries of Theorem 1}

If $w_{\mu}\not=-1$ for all $\mu\in \widetilde{S}_r$, then each orbit $V\cdot x$ must
intersect only one of the $C_{\mu}$'s, and only once at that. Hence

\begin{corollary}
Suppose that \ $w_{\mu}\not=-1$ \ for all \ $\mu\in \widetilde{S}_r$, \ then \ $\cup_{\substack{\mu\in \widetilde{S}_r\\ w_\mu\not=0}} C_{\mu}$ \
is a true fundamental domain for the action of \ $V$ \ on \ $\C^*\times\R_+^r$.
\end{corollary}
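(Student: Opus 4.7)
The plan is to derive this corollary as a direct consequence of Theorem~\ref{Main}, without any new geometric input. The hypothesis $w_\mu \ne -1$ for all $\mu \in \widetilde{S}_r$ means, together with the definition \eqref{wsigmaqn}, that $w_\mu \in \{0, +1\}$ for every $\mu$. Under this restriction the signed sum in \eqref{Basic} becomes an ordinary sum of non-negative integers, and we can read off the fundamental-domain property from the fact that the only way to get $1$ as such a sum is to have a single summand equal to $1$ and all others equal to $0$.

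More precisely, fix $x \in \C^* \times \R_+^r$ and rewrite the left-hand side of \eqref{Basic} as
\begin{equation*}
\sum_{\substack{\mu \in \widetilde{S}_r \\ w_\mu \ne 0}} w_\mu \cdot \#\bigl(C_\mu \cap V\cdot x\bigr) \;=\; 1,
\end{equation*}
where each cardinality is finite and bounded independently of $x$ by the last clause of Theorem~\ref{Main}. Under the hypothesis, each factor $w_\mu$ is $+1$, so the identity forces
\begin{equation*}
\sum_{\substack{\mu \in \widetilde{S}_r \\ w_\mu = +1}} \#\bigl(C_\mu \cap V\cdot x\bigr) \;=\; 1.
\end{equation*}
Since each term is a non-negative integer, exactly one of the sets $C_\mu \cap V\cdot x$ (over $\mu$ with $w_\mu = +1$) is a singleton and all the others are empty. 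In particular, the orbit $V\cdot x$ meets the union $\bigcup_{\mu,\,w_\mu \ne 0} C_\mu$ in exactly one point.

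As $x$ ranges over $\C^* \times \R_+^r$, this establishes that every $V$-orbit intersects $\bigcup_{\mu,\,w_\mu \ne 0} C_\mu$ in exactly one point, which is precisely the condition for this union to be a (true) fundamental domain for the action of $V$ on $\C^* \times \R_+^r$. There is no real obstacle here: the content lies entirely in Theorem~\ref{Main}, and the corollary is just the observation that a signed count with weights in $\{0,+1\}$ that sums to $1$ must count a unique orbit representative.
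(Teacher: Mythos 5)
Your argument is correct and is exactly the paper's reasoning, just spelled out in more detail: the paper dispatches the corollary in one sentence by observing that when no $w_\mu$ equals $-1$, equation \eqref{Basic} forces each orbit $V\cdot x$ to hit exactly one $C_\mu$, exactly once. Nothing to add.
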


The next corollary shows that a signed fundamental domain is as convenient as a true one for dealing with partial zeta functions associated to $k$.\footnote{Its proof coincides with that of \cite[Corollary 6]{DF1}, so we omit it.} Fix an integral ideal $\mathfrak{f}$ of $k$, and put $\mathfrak{f}\infty$ the formal product of $\mathfrak{f}$ with all the infinite places of $k$. Let $\zeta_\mathfrak{f}(\overline{\mathfrak{a}},s):=\sum_{\mathfrak{b}\in\overline{\mathfrak{a}}}\mathrm{N}\mathfrak{b}^{-s}$ ($\re(s)>1$) be the Dedekind partial zeta function attached to a ray class $\overline{\mathfrak{a}}$ modulo $\mathfrak{f}\infty$ represented by the integral ideal $\mathfrak{a}$. Here $\mathfrak{b}$ runs over all integral ideals in $\overline{\mathfrak{a}}$, and $\mathrm{N}$ is the absolute norm.

\begin{corollary}\label{zetaparcial} Suppose $\varepsilon_1,\dots,\varepsilon_r$ generate the group $\Ok^{\mathfrak{f}}$ of totally positive units of $k$ that are congruent to $\mathrm{1}$ mod $\mathfrak{f}$, and suppose we have chosen
$\alpha_0,\dots,\alpha_{N-1}\in\mathfrak{a}^{-1}\mathfrak{f}$ in the fourth step of the 7SA. Then
$$
\zeta_\mathfrak{f}(\overline{\mathfrak{a}},s)=\mathrm{N}\mathfrak{a}^{-s}
\sum_{\substack{\mu\in \widetilde{S}_r\\w_{\mu}\not=0 }} w_{\mu}\sum_{x\in
R_\mathfrak{f}(\mathfrak{a},C_{\mu})}\zeta_\mathfrak{f}(C_{\mu},x,s)\qquad\qquad\big(\re(s)>1\big),
$$
where $\zeta_\mathfrak{f}(C_{\mu},x,s)$ is the Shintani zeta function
\begin{align*}
\zeta_\mathfrak{f}(C_{\mu},x,s)&:=\sum_{n_1,\dots,n_{r+2}=0}^\infty\,  \Big|x^{(1)}+\sum_{t=1}^{r+2} n_tf_{t,\mu}^{(1)}
\Big|^{-2s}\cdot\prod_{j=2}^{r+2} \Big(x^{(j)}+\sum_{t=1}^{r+2} n_tf_{t,\mu}^{(j)}
\Big)^{-s},
\\
R_\mathfrak{f}(\mathfrak{a},C_{\mu}) &:=\Big\{ x\in 1+\mathfrak{a}^{-1}\mathfrak{f} \ \big| \ x=\sum_{t=1}^{r+2} y_t f_{t,\mu},
\ y_t\in I_{t,\mu} \Big\},
\\
I_{t,\mu}&:= \begin{cases}[0,1)\  & \mathrm{if}\ e_{r+2}\in H_{t,\mu}^+ ,\\
  (0,1]\ & \mathrm{if}\ e_{r+2}\in H_{t,\mu}^-.
\end{cases}
\end{align*}
\end{corollary}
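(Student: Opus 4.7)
The plan is to follow the standard Shintani pattern, as in \cite[Corollary~6]{DF1}, combining Theorem~\ref{Main} with a Shintani-type parameterization of integral ideals and a lattice decomposition inside each cone. First I would invoke the classical bijection between integral ideals $\mathfrak{b}\in\overline{\mathfrak{a}}$ and $V$-orbits on
$$
\Lambda_+:=\big\{x\in 1+\mathfrak{a}^{-1}\mathfrak{f}\ \big|\ x\in\C^*\times\R_+^r\big\},
$$
sending a representative $x$ to $\mathfrak{b}=x\mathfrak{a}$. Under this correspondence $\mathrm{N}\mathfrak{b}=\mathrm{N}\mathfrak{a}\cdot|\mathrm{N}_{k/\Q}(x)|$, so
$$
\zeta_\mathfrak{f}(\overline{\mathfrak{a}},s)=\mathrm{N}\mathfrak{a}^{-s}\sum_{[x]\in\Lambda_+/V}|\mathrm{N}_{k/\Q}(x)|^{-s}\qquad(\re(s)>1).
$$
The hypothesis that $\varepsilon_1,\dots,\varepsilon_r$ generate $\Ok^\mathfrak{f}$ is precisely what makes $V$ act on $\Lambda_+$ with orbits parameterizing ideals in $\overline{\mathfrak{a}}$.

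Next I would apply Theorem~\ref{Main} orbit-by-orbit: for each $x\in\Lambda_+\subset\mi$ the weighted intersection count $\sum_\mu w_\mu |V\cdot x\cap C_\mu|$ equals $1$. Multiplying by $|\mathrm{N}_{k/\Q}(x)|^{-s}$ and summing over $\Lambda_+/V$, an interchange justified by absolute convergence of the partial zeta function for $\re(s)>1$, gives
$$
\zeta_\mathfrak{f}(\overline{\mathfrak{a}},s)=\mathrm{N}\mathfrak{a}^{-s}\sum_{\substack{\mu\in\widetilde{S}_r\\ w_\mu\neq0}}w_\mu\sum_{x\in C_\mu\cap\Lambda_+}|\mathrm{N}_{k/\Q}(x)|^{-s}.
$$

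To evaluate the inner sum I would exploit the cone structure. Since $w_\mu\neq0$, the vectors $f_{1,\mu},\dots,f_{r+2,\mu}$ are $\R$-linearly independent in $\C\times\R^r$, so each $x\in C_\mu$ has a unique decomposition
$$
x=\sum_{t=1}^{r+2}(n_t+y_t)f_{t,\mu},\qquad n_t\in\Z_{\ge0},\ \ y_t\in I_{t,\mu},
$$
where the half-open intervals $I_{t,\mu}$ are forced to match the boundary choices $\R_{t,\mu}$ in the definition of $C_\mu$. Because each $f_{t,\sigma}\in\Ok$ is an integral unit and each $\alpha_t\in\mathfrak{a}^{-1}\mathfrak{f}$, every $f_{t,\mu}$ lies in $\mathfrak{a}^{-1}\mathfrak{f}$; hence $\sum_t n_t f_{t,\mu}\in\mathfrak{a}^{-1}\mathfrak{f}$, and $x\in\Lambda_+$ if and only if the fractional part $\sum_t y_t f_{t,\mu}$ belongs to $R_\mathfrak{f}(\mathfrak{a},C_\mu)$. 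Grouping according to this fractional part and using
$$
|\mathrm{N}_{k/\Q}(y)|^{-s}=|y^{(1)}|^{-2s}\prod_{j=2}^{r+2}(y^{(j)})^{-s}\qquad(y\in\C^*\times\R_+^r),
$$
the remaining sum over $n_1,\dots,n_{r+2}\in\Z_{\ge0}$ is exactly $\zeta_\mathfrak{f}(C_\mu,x,s)$, yielding the claimed identity.

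The main bookkeeping obstacle is the boundary: one has to verify that the assignment $I_{t,\mu}=[0,1)$ or $(0,1]$ dictated by the sign of $e_{r+2}$ relative to $H_{t,\mu}$ is exactly the one making the above decomposition bijective on $C_\mu$, so that no $x$ is double-counted across shared faces. Finiteness of $R_\mathfrak{f}(\mathfrak{a},C_\mu)$ (the intersection of the discrete set $1+\mathfrak{a}^{-1}\mathfrak{f}$ with the bounded parallelepiped $\sum_t I_{t,\mu}f_{t,\mu}$), together with absolute convergence of the Shintani zeta functions for $\re(s)>1$, legitimizes all the summation interchanges. Since everything after Theorem~\ref{Main} is formally identical to \cite[Corollary~6]{DF1}, no new difficulty arises beyond that already handled by Theorem~\ref{Main}.
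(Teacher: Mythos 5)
The paper does not actually write out a proof of this corollary: a footnote states that the proof coincides with that of \cite[Corollary 6]{DF1}, and the proof is omitted. Your reconstruction is exactly that standard argument, and it is correct. You correctly invoke the bijection between ray classes and $V$-orbits on $1+\mathfrak{a}^{-1}\mathfrak{f}$ intersected with $\C^*\times\R_+^r$, apply Theorem~\ref{Main} to collapse the orbit sum onto the signed cones, observe that each $f_{t,\mu}$ lies in $\mathfrak{a}^{-1}\mathfrak{f}$ (unit times $\alpha_\ell\in\mathfrak{a}^{-1}\mathfrak{f}$), and perform the usual lattice decomposition of each cone into translated parallelepipeds, with the boundary conventions $I_{t,\mu}=[0,1)$ or $(0,1]$ matching the choice $\R_{t,\mu}=[0,\infty)$ or $(0,\infty)$ in step~7 of the 7SA so that the decomposition is a bijection. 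This matches the intended (referenced) proof.

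One small caveat, not in your proof but in the paper's statement: with $r$ real embeddings the product over real places should run over $r$ indices, so the upper limit $r+2$ in $\prod_{j=2}^{r+2}$ appears to be a typo for $r+1$ (compare the coordinate conventions $x^{(j)}=\tau_j(x)$, $1\le j\le r+1$); your reconstruction inherits this indexing but the underlying argument is unaffected.
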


\section{Examples}

In this section we show three examples of signed fundamental domains obtained by using the 7SA. Our numerical results are up to an error less than $10^{-28}$.

\subsection{Cubic case} Let $k=\Q(\gamma)$, where $\gamma^3+\gamma^2-1=0$. Then the discriminant of the complex cubic number field $k$ is $-23$. Let
$$\varepsilon_1=\gamma=\frac{1}{\gamma^2+\gamma}=\big[(-0{.}8774...)+(-0{.}7448...)i \, , \, 0{.}7548...\big] \ \in \ \Ok.$$
In the next two examples, we give signed fundamental domains for the action of $\langle \varepsilon_1\rangle$ on $\C^*\times\R_+$.

\subsubsection{Example 1} If $N=3$, $\alpha_0=1$, $\alpha_1=2\gamma^2+2\gamma+1$, and $\alpha_2=2\gamma+1$, one verifies that $\arg\!\big(\alpha_0^{(1)}\big)=0,$
\begin{align*}
\arg\!\big(\alpha_1^{(1)}\cdot\e(-2\pi i/3)\big)=-0{.}2424..., \quad \text{and} \quad
\arg\!\big(\alpha_2^{(1)}\cdot\e(-4\pi i/3)\big)=0{.}0545...
\end{align*}
lie in the interval $(-\pi/6,\pi/6)=(-0{.}5235...,0{.}5235...)$. Following steps 1, 2 and 3, we have
$$\m\!\big(\xi_{(1)}(1)\big)=-1, \qquad \m\!\big(\xi_{(1)}(2)\big)=0,\qquad \m\!\big(\xi_{(1)}(2,1)\big)=-1,\qquad \m\!\big(\xi_{(1)}(1,2)\big)=2,$$
where $(1)\in S_1$ is the identity permutation; hence $2\prec_{(1)}1$, and $\rho_{(1)}$ is the identity permutation of $S_2$. Now, using \eqref{ftsigmaqj} and \eqref{ftsigmaqj2} we compute
\begin{align*}
&f_{1,(1),2,0}=2\gamma+1, && f_{2,(1),2,0}=\gamma, &&f_{3,(1),2,0}=\gamma+2,\\
&f_{1,(1),1,0}=2\gamma+1, &&f_{2,(1),1,0}=\gamma+2, &&f_{3,(1),1,0}=1,\\
&f_{1,(1),2,1}=1, &&f_{2,(1),2,1}=\gamma+2, &&f_{3,(1),2,1}=2\gamma^2+\gamma,\\
&f_{1,(1),1,1}=1, &&f_{2,(1),1,1}=2\gamma^2+\gamma, &&f_{3,(1),1,1}=2\gamma^2+2\gamma+1,\\
&f_{1,(1),2,2}=2\gamma^2+2\gamma+1, &&f_{2,(1),2,2}=2\gamma^2+\gamma, &&f_{3,(1),2,2}=\gamma,\\
&f_{1,(1),1,2}=2\gamma^2+2\gamma+1, &&f_{2,(1),1,2}=\gamma, &&f_{3,(1),1,2}=2\gamma+1.
\end{align*}
Then we compute the $w_\mu$ using \eqref{wsigmaqn}, with $\Log \ \varepsilon_1=0{.}1405...$;
\begin{align*}
&\det\big(f_{1,(1),2,0},f_{2,(1),2,0},f_{3,(1),2,0}\big)=0, &&w_{(1),2,0}=0,\\
&\det\big(f_{1,(1),1,0},f_{2,(1),1,0},f_{3,(1),1,0}\big)=0, &&w_{(1),1,0}=0,\\
&\det\big(f_{1,(1),2,1},f_{2,(1),2,1},f_{3,(1),2,1}\big)=-4{.}7958..., &&w_{(1),2,1}=-1,\\
&\det\big(f_{1,(1),1,1},f_{2,(1),1,1},f_{3,(1),1,1}\big)=4{.}7958..., &&w_{(1),1,1}=+1,\\
&\det\big(f_{1,(1),2,2},f_{2,(1),2,2},f_{3,(1),2,2}\big)=4{.}7958..., &&w_{(1),2,2}=+1,\\
&\det\big(f_{1,(1),1,2},f_{2,(1),1,2},f_{3,(1),1,2}\big)=4{.}7958...., &&w_{(1),1,2}=+1.
\end{align*}
Finally, the following equations allow us to determine the $\R_{i,\mu}$ defined in step 7.
\begin{align*}
e_3&=(-0{.}3681...)f_{1,(1),2,1}+(0{.}3898...)f_{2,(1),2,1}+(0{.}0155...)f_{3,(1),2,1}\\
&=(0{.}0216...)f_{1,(1),1,1}+(-0{.}2344...)f_{2,(1),1,1}+(0{.}3898...)f_{3,(1),1,1}\\
&=(0{.}4114...)f_{1,(1),2,2}+(-0{.}2561...)f_{2,(1),2,2}+(-0{.}0216...)f_{3,(1),2,2}\\
&=(0{.}1553...)f_{1,(1),1,2}+(-0{.}2778...)f_{2,(1),1,2}+(0{.}2561...)f_{3,(1),1,2}.
\end{align*}
Therefore, the cones of the signed fundamental domain are
\begin{align*}
&C_{(1),2,1}=\{t_1+t_2(\gamma+2)+t_3(2\gamma^2+\gamma) \ | \ t_1>0, \ t_2\geq 0, \ t_3\geq 0\},\\
&C_{(1),1,1}=\{t_1+t_2(2\gamma^2+\gamma)+t_3(2\gamma^2+2\gamma+1) \ | \ t_1\geq 0, \ t_2> 0, \ t_3\geq 0\},\\
&C_{(1),2,2}=\{t_1(2\gamma^2+2\gamma+1)+t_2(2\gamma^2+\gamma)+t_3\gamma \ | \ \ t_1\geq 0, \ t_2>0, \ t_3>0\},\\
&C_{(1),1,2}=\{t_1(2\gamma^2+2\gamma+1)+t_2\gamma+t_3(2\gamma+1) \ | \ \ t_1\geq 0, \ t_2> 0, \ t_3\geq 0\}.
\end{align*}
Figure~\ref{Figure1} represents the intersection of the plane $\{(z,1) \ | \ z\in\C \}\subset \C\times\R$ with the signed fundamental domain. The blue region indicates the cones with $w_\mu$ positive, the red region indicates the cone with $w_\mu$ negative, and the purple region represents the intersection of two cones with opposite signs.

\begin{figure}
\includegraphics[scale=0.75]{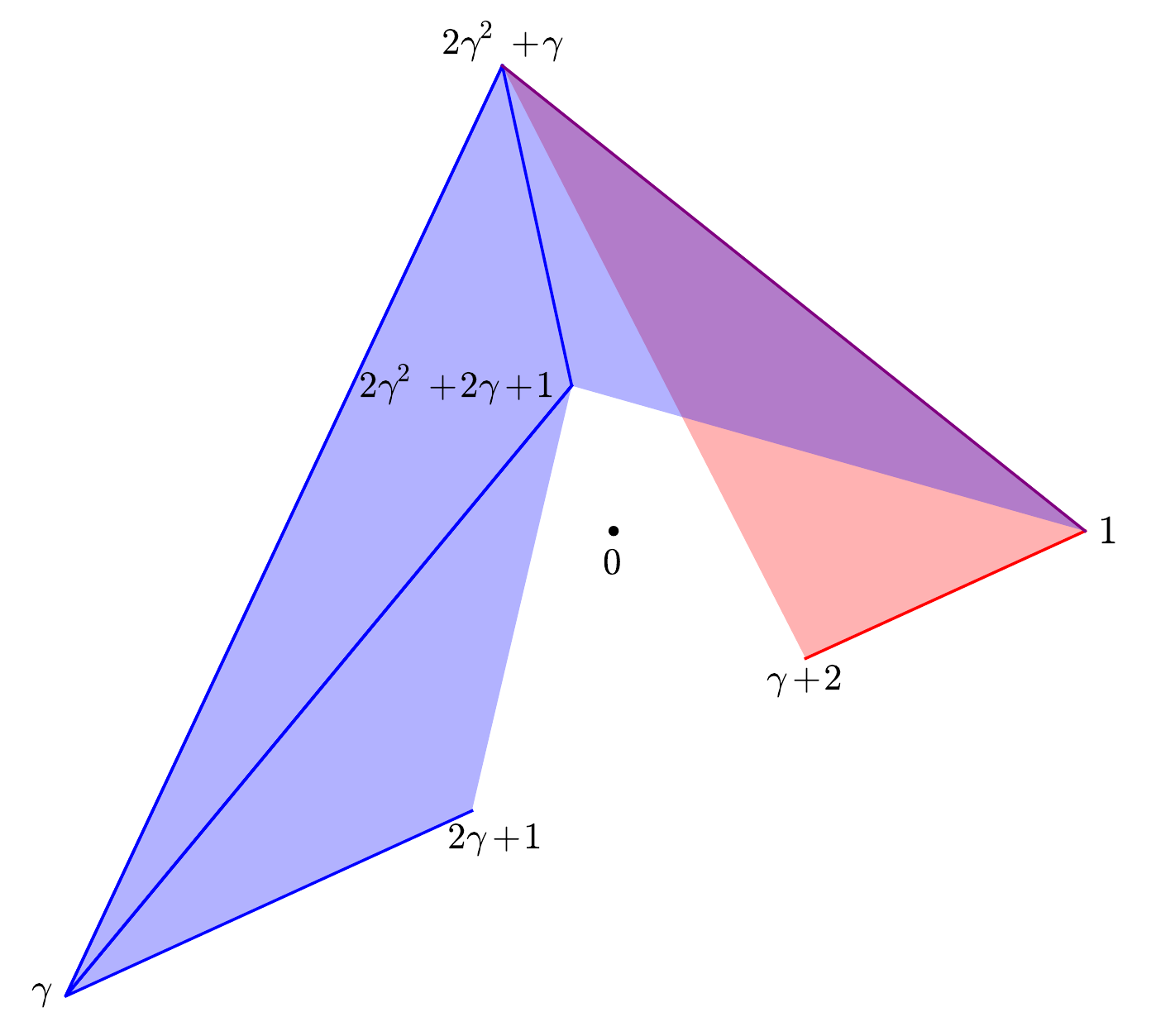}
\caption{Signed fundamental domain for the action of $\langle\gamma\rangle$ on $\C^*\times\R_+$, where $\gamma^3+\gamma^2-1=0$, $\alpha_0=1$, $\alpha_1=2\gamma^2+2\gamma+1$, and
  $\alpha_2=2\gamma+1$.}
\label{Figure1}
\end{figure}

\subsubsection{Example 2} If $N=3$, $\alpha_0=1$, $\alpha_1=\gamma^2+\gamma$, and $\alpha_2=\gamma$, then the 7SA gives
$$C_{(1),1,1}=\{t_1+t_2\gamma^2+t_3(\gamma^2+\gamma) \ | \ \ t_1\geq 0, \ t_2> 0, \ t_3\geq 0\},$$
$w_{(1),1,1}=+1$, and $w_\mu=0$ for all $\mu\in \widetilde{S}_r$ with $\mu\not=\big((1),1,1\big)$. Therefore, in this case the 7SA gives a true fundamental domain for the action of $\langle\gamma\rangle$ on $\C^*\times\R_+$.

\subsection{Quartic case} Let $k=\Q(\gamma)$, where $\gamma^4+\gamma-1=0$. Then the discriminant of $k$ is $-283$. Let
$$\varepsilon_1=\gamma^2=\frac{1}{\gamma^3+\gamma^2+1} \qquad \mathrm{and} \qquad \varepsilon_2=\gamma^2+1=\frac{1}{\gamma^3-\gamma+1}$$
be two independent totally positive units of $k$, with
$$\gamma=\big[(0{.}2481...)+(-1{.}0339...)i \, , \, -1{.}2207... \, , \, 0{.}7244...\big] \ \in \ \C\times\R^2.$$

\subsubsection{Example 3} If $N=3$, $\alpha_0=1$, $\alpha_1=\gamma^2-\gamma+1$, and $\alpha_2=\gamma^2+\gamma$, then the 7SA gives the signed fundamental domain for the action of $\langle\varepsilon_1,\varepsilon_2\rangle$ on $\C^*\times\R_+^2$ with
\begin{align*}
C_{(1),2,0}=\{&t_1(\gamma^2-\gamma+1)+t_2\gamma^2+t_3(-2\gamma^3+3\gamma^2-3\gamma+2)+t_4(\gamma^2+\gamma) \ |\\
&t_1\ge0, \ t_2>0, \ t_3>0, \ t_4\ge0\},\\
C_{(1),3,1}=\{&t_1(\gamma^2+\gamma)+t_2(-\gamma^3+\gamma^2-\gamma+1)+t_3(-2\gamma^3+3\gamma^2-3\gamma+2)+t_4 \ |\\
&t_1\ge0, \ t_2>0, \ t_3\ge0, \ t_4\ge0\},\\
C_{(12),1,0}=\{&t_1(\gamma^2+\gamma)+t_2(\gamma^3+\gamma^2+1)+t_3(-2\gamma^3+3\gamma^2-3\gamma+2)+t_4(\gamma^2+1) \ |\\
&t_1>0, \ t_2\ge0, \ t_3>0, \ t_4\ge0\},\\
C_{(12),3,1}=\{&t_1(\gamma^2+\gamma)+t_2(\gamma^2+1)+t_3(-2\gamma^3+3\gamma^2-3\gamma+2)+t_4 \ |\\
&t_1>0, \ t_2\ge0, \ t_3>0, \ t_4>0\},
\end{align*}
and $w_{(1),2,0}=w_{(1),3,1}=w_{(12),1,0}=w_{(12),3,1}=+1$. The rest of the $w_\mu$ are 0. So, as in the previous example, this signed fundamental domain is actually a true one.

\section{Construction of $f$} \label{constructionoff}

As in \cite{DF1}, we will prove Theorem~\ref{Main} by interpreting the left-hand side of \eqref{Basic} as a sum of local degrees of a certain continuous map $F:\widehat{T}\to T$ between a standard $(r+1)$-torus $\widehat{T}$ and an $(r+1)$-torus $T$. Using a basic result in algebraic topology, this sum of local degrees equals the global degree of $F$. We will compute this global degree by proving that $F$ is homotopic to an explicit homeomorphism $F_0$, whose degree can be easily computed. Our contribution lies in the construction of a piecewise affine map $f$, which we will use to define $F$.

\subsection{The argument at the complex embedding}

As we said in the Introduction, the non-convexity of $\mi$ is an obstruction to deal with. To bypass this obstruction, we will divide $\mi$ into certain convex regions using the argument at the complex place. For $N\in\N$ (with
$N\geq 3$ and fixed), we define the regions
\begin{equation}\label{semiplanos}
\s_t=\s_{t,N}:=\e\left(2\pi it /N\right)\cdot\s_0\subset\C^* \qquad\qquad (t\in\Z),
\end{equation}
where
$$
\s_0=\s_{0,N}:=\left\{z\in\C^* \ \left| \ \arg(z)\in [-\pi/2N,5\pi/2N) \right.\right\}.
$$
Since $N\ge3$ the $\s_t$ are convex, and their union for $t\in\Z$ is $\C^*$. Also $\s_t=\s_{t'}$ if and only if $t\equiv t'(\mo N)$.

Now we define the ``windmill arms'' $\A_t$ by
\begin{equation}\label{aspas}
\A_t=\A_{t,N}:=\e\left(2\pi it/N\right)\cdot\A_0 \qquad\qquad (t\in\Z),
\end{equation}
where
$$\A_0=\A_{0,N}:=\left\{z\in\C^* \ \left| \ \arg(z)\in [-\pi/2N,\pi/2N] \right.\right\}.$$
Since $\A_0$ and the interior $\stackrel{\circ}{\A}_1$ of $\A_1$ are contained in $\s_0$, we have
\begin{equation}\label{aspasemiplano}
\A_t\subset\s_t, \qquad\qquad \stackrel{\circ}{\A}_{1+t}\subset\s_t, \qquad\qquad \A_{1+t}\not\subset\s_t\qquad\qquad (t\in\Z).
\end{equation}
Figure~\ref{Figure2} shows the windmill arms $\mathcal{A}_t$ in the case $N=3$.

\begin{figure}
\includegraphics[scale=0.5]{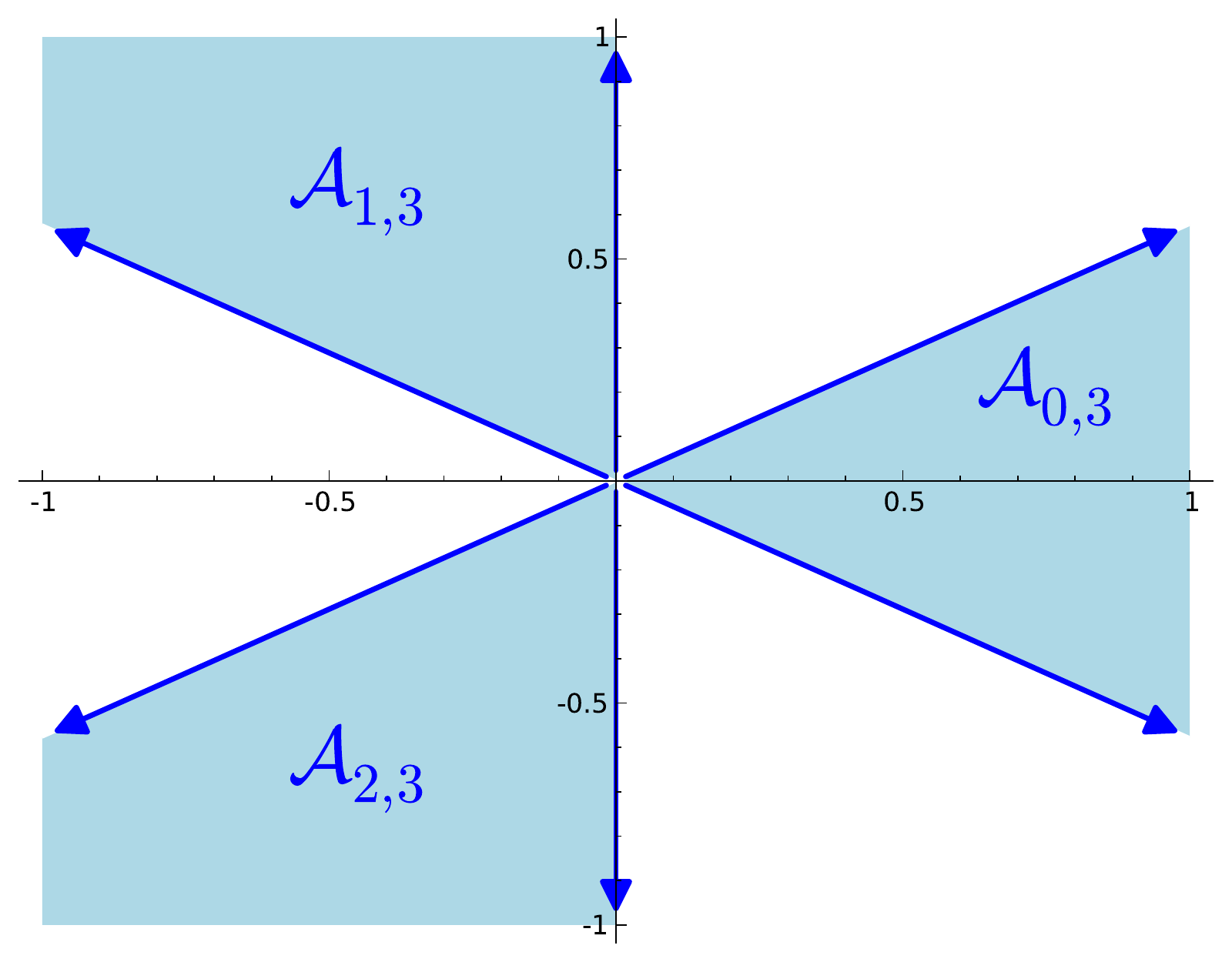}
\caption{$\mathcal{A}_t$ for $N=3$.}
\label{Figure2}
\end{figure}

Before continuing the study of the regions described above, we need some elementary properties of the function $\m:\C^*\to\Z$ defined in \eqref{m}. In the following, all the congruences ($\equiv$) will be modulo $N$.

\begin{lemma} \label{mprop}
Let $z,u,v,w\in\C^*$. Then the following hold.
\begin{enumerate}[$(i)$]
\item $\m(zw)$ \ is congruent to either \ $\m(z)+\m(w)$ \ or \ $\m(z)+\m(w)-1$.

\item If \ $\m(z)+\m(z^{-1})\equiv0$, \ then \ $\m(zw)\equiv\m(z)+\m(w)$.

\item $\arg\!\Big(z\cdot\e\big(2\pi i\m(z)/N\big)\Big)$ \ lies in \ $\big[0,\,2\pi/N\big)$.

\item If \ $\m(u^{-1}v)+\m(v^{-1}w)\equiv\m(w)-\m(u)$, \ then \ $\m(u^{-1}w)\equiv\m(w)-\m(u)$.

\item We have \ $\m(v^{-1}u)+\m(vu^{-1})\equiv0$ \ if the following four equations hold;
\begin{align*}
&\m(u^{-1}w)+\m(uw^{-1})\equiv0, &&\m(u^{-1}v)\equiv\m(v)-\m(u),\\
&\m(u^{-1}w)\equiv\m(w)-\m(u), &&\m(v^{-1}w)\equiv\m(w)-\m(v).
\end{align*}
\end{enumerate}
\end{lemma}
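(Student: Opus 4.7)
The key bookkeeping device will be the real-valued function $\phi(z):=\arg(z)+2\pi\m(z)/N$ for $z\in\C^*$. Unpacking the ceiling-function inequality $-N\arg(z)/(2\pi)\le\m(z)<-N\arg(z)/(2\pi)+1$ gives $\phi(z)\in[0,2\pi/N)$, and since $N\ge3$ this interval is contained in $(-\pi,\pi)$; hence $\phi(z)$ is the principal argument of $z\cdot\e(2\pi i\m(z)/N)$, which proves $(iii)$. For $(i)$, I would write $\phi(z)+\phi(w)-\phi(zw)=2\pi(\m(z)+\m(w)-\m(zw))/N+2\pi j$ for some integer $j$ (the $2\pi j$ corrects for the mod-$2\pi$ ambiguity of $\arg$). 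The left side lies in $(-2\pi/N,4\pi/N)$, so rescaling by $N/(2\pi)$ forces $\m(z)+\m(w)-\m(zw)\pmod N$ to equal an integer in $(-1,2)$, namely $0$ or $1$. I will also note, for later use, that the $0$-case occurs precisely when $\phi(z)+\phi(w)<2\pi/N$.

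Part $(ii)$ is then immediate: taking $w=z^{-1}$ in $(i)$, the hypothesis $\m(z)+\m(z^{-1})\equiv0\pmod N$ places us in the $0$-case, so $\phi(z)+\phi(z^{-1})=\phi(1)=0$ and hence $\phi(z)=0$; for any $w$, the inequality $\phi(z)+\phi(w)=\phi(w)<2\pi/N$ reinstates the $0$-case of $(i)$ and yields $\m(zw)\equiv\m(z)+\m(w)\pmod N$. For $(iv)$, I would translate the hypothesis into $\phi$'s: substituting $\m(x)=N(\phi(x)-\arg(x))/(2\pi)$ and using $\arg(u^{-1}v)+\arg(v^{-1}w)-\arg(w)+\arg(u)\equiv0\pmod{2\pi}$, the hypothesis becomes $\phi(u^{-1}v)+\phi(v^{-1}w)+\phi(u)-\phi(w)\equiv0\pmod{2\pi}$. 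This real number sits in $(-2\pi/N,6\pi/N)\subset(-2\pi,2\pi)$, so it vanishes exactly. Rearranging gives $\phi(u^{-1}v)+\phi(v^{-1}w)=\phi(w)-\phi(u)<2\pi/N$, which is the $0$-case of $(i)$ for $(u^{-1}v)(v^{-1}w)=u^{-1}w$; hence $\m(u^{-1}w)\equiv\m(u^{-1}v)+\m(v^{-1}w)\equiv\m(w)-\m(u)\pmod N$.

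For $(v)$, I would translate each of the four hypotheses analogously. Hypothesis $(1)$ combined with $(ii)$ (and its converse: $\phi(z)=0$ forces $\m(z)+\m(z^{-1})\equiv0$, and conversely) gives $\phi(u^{-1}w)=\phi(uw^{-1})=0$, while hypotheses $(2)$, $(3)$, $(4)$ read $\phi(u^{-1}v)+\phi(u)=\phi(v)$, $\phi(u^{-1}w)+\phi(u)=\phi(w)$, and $\phi(v^{-1}w)+\phi(v)=\phi(w)$. Hypothesis $(3)$ together with $\phi(u^{-1}w)=0$ forces $\phi(u)=\phi(w)$; then $(4)$ becomes $\phi(v^{-1}w)+\phi(v)=\phi(u)$, and adding $(2)$ produces $\phi(u^{-1}v)+\phi(v^{-1}w)=0$, so both terms vanish. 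A final application of the $0$-case of $(i)$ to the inverse pair $(u^{-1}v,v^{-1}u)$, which multiply to $1$ in $\C^*$, gives $\phi(v^{-1}u)=0$, and this is equivalent via the $\phi\leftrightarrow\m$ dictionary to the required $\m(v^{-1}u)+\m(vu^{-1})\equiv0\pmod N$. The main obstacle I anticipate is keeping the two independent modular reductions, mod $2\pi$ coming from $\arg$ and mod $N$ coming from $\m$, cleanly separated; once $\phi$ is set up, every remaining step is a short pigeonhole argument using $N\ge3$.
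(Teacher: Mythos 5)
Your proof is correct, and it takes a genuinely different route from the paper's. You introduce the ``unwrapped argument'' $\phi(z):=\arg(z)+2\pi\m(z)/N\in[0,2\pi/N)$, observe that the two-valued ambiguity in $(i)$ is governed by whether $\phi(z)+\phi(w)<2\pi/N$ or not, and then translate each modular hypothesis into an exact real equality among $\phi$'s: because $N\ge3$ keeps all the relevant sums inside an interval of length $<4\pi$, the ``$\equiv0\pmod{2\pi}$'' statements collapse to ``$=0$.'' Parts $(ii)$, $(iv)$, $(v)$ then become short calculations with honest equalities. The paper instead proves $(ii)$, $(iv)$, $(v)$ by contradiction, in each case assuming the bad branch of $(i)$ and applying $(i)$ again to reach a congruence like $\m(w)\equiv\m(w)-1$ or $\equiv\m(w)-2$, impossible for $N\ge3$. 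Your $\phi$-bookkeeping has the advantage of making the role of the hypothesis $N\ge3$ transparent and uniform, and it makes the useful intermediate fact $\phi(z)=0\Leftrightarrow\m(z)+\m(z^{-1})\equiv0$ explicit (the paper never isolates this equivalence, though it is effectively used). One small point worth stating explicitly is that your last step of $(v)$ uses the commutativity of $\C^*$ to identify $vu^{-1}$ with $(v^{-1}u)^{-1}=u^{-1}v$; this is of course true and the paper implicitly does the same.
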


\begin{proof}
First note that for all $x,y\in\R$ and all $\ell'\in\Z$ we have
$$
\lceil x+\ell'\rceil=\lceil x\rceil+\ell' \qquad \mathrm{and} \qquad \lceil x\rceil + \lceil y\rceil -1\leq \lceil x+y\rceil\leq \lceil x\rceil + \lceil y\rceil.
$$
Thus (i) follows easily from \eqref{m}, and from these two properties of the ceiling function.

To prove (ii), first note that $\m(zw)$ is congruent to either $\m(z)+\m(w)-1$ or $\m(z)+\m(w)$ \big(by (i)\big). Suppose $\m(zw)\equiv\m(z)+\m(w)-1$. Using (i), we have that $\m(w)=\m(zwz^{-1})$ is congruent to either $\m(zw)-\m(z)$ or $\m(zw)-\m(z)-1$ \big(since $-\m(z)\equiv\m(z^{-1})$\big), and so congruent to either $\m(w)-1$ or $\m(w)-2$, which is absurd since $N\ge3$.

Let us prove (iii). Using the identity $\lceil x+\ell'\rceil=\lceil x\rceil+\ell'$ $(x\in\R, \ \ell'\in\Z)$, we have that $\m\!\Big(z\cdot\e\big(2\pi i\m(z)/N\big)\Big)$ is congruent to $\left\lceil\frac{-N}{2\pi}\big(\arg(z)+2\pi\m(z)/N\big)\right\rceil$, and so congruent to 0.
In general, if $w'\in\C^*$ is such that $\m(w')\equiv 0$, then we have that $\lceil-N\arg(w')/2\pi-Nq\rceil=0$
for some $q\in\Z$. But this is equivalent to $0\leq\arg(w')+2\pi q<2\pi/N$, so $q=0$. Therefore, we have proved (iii).

If $\m(u^{-1}v)+\m(v^{-1}w)+\m(u)$ is congruent to $\m(u^{-1}w)+1+\m(u)$, then it is congruent to either $\m(w)+2$ or $\m(w)+1$ by (i), but this is absurd since $N\ge 3$. Hence, $\m(u^{-1}w)+\m(u)\equiv\m(w)$.

To prove (v), suppose $\m(v^{-1}u)+\m(vu^{-1})\equiv1$. Using (ii) we have that
$$\m(wv^{-1})\equiv\m(u^{-1}w\cdot v^{-1}u)\equiv\m(u^{-1}w)+\m(v^{-1}u),$$
but while the left-hand side of these congruences is congruent to $\m(w)-\m(v)$, the right-hand side is congruent to $\m(w)-\m(v)+1$, which is absurd. Therefore, from (i) we have $\m(v^{-1}u)+\m(vu^{-1})\equiv0$, since $\m(1)=0$.
\end{proof}

Next we give necessary and sufficient conditions for some inclusion relations of the regions $\A_t$ and $\s_t$. These conditions are based on modular arithmetic, and they allow us to relate $\A_t$ and $\s_t$ with the relation $\prec_\sigma$ defined by conditions \eqref{orden1}, \eqref{orden2} and \eqref{orden3}.

\begin{lemma} \label{aspadentrosemiplano}
Let $z\in\C^*$ and let $t,k\in\Z$. Then the following hold.
\begin{enumerate}[$(i)$]
\item $z\cdot\A_t\subset\s_k$ \ if and only if \ $\m(z)\equiv t-k$.

\item $z\cdot\A_t=\A_k$ \ if and only if \ $\m(z)+\m(z^{-1})\equiv0$ \ and \ $\m(z)\equiv t-k$.
\end{enumerate}
\end{lemma}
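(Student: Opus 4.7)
The plan is to reduce both parts of the lemma to the base case $t=k=0$ by exploiting the rotational definitions of $\A_t$ and $\s_t$. Setting $w:=z\cdot\e\big(2\pi i(t-k)/N\big)$, the identities \eqref{semiplanos} and \eqref{aspas} give $z\cdot\A_t=\e(2\pi ik/N)\cdot w\A_0$ and $\s_k=\e(2\pi ik/N)\cdot\s_0$, so $z\cdot\A_t\subset\s_k$ is equivalent to $w\A_0\subset\s_0$, and $z\cdot\A_t=\A_k$ to $w\A_0=\A_0$. Using the ceiling identity $\lceil x-\ell\rceil=\lceil x\rceil-\ell$ for $\ell\in\Z$, one checks directly from \eqref{m} that $\m(w)\equiv \m(z)-(t-k)$ and $\m(w^{-1})\equiv\m(z^{-1})+(t-k)$ modulo $N$, so $\m(z)\equiv t-k$ translates to $\m(w)\equiv 0$, and $\m(z)+\m(z^{-1})\equiv 0$ translates to $\m(w)+\m(w^{-1})\equiv 0$. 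It therefore suffices to prove \textbf{(A)}: $w\A_0\subset\s_0$ iff $\m(w)\equiv 0 \ (\mo N)$, and \textbf{(B)}: $w\A_0=\A_0$ iff $\m(w)\equiv 0$ and $\m(w)+\m(w^{-1})\equiv 0 \ (\mo N)$.

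For (A), as $\arg(z)$ varies over the closed interval $[-\pi/2N,\pi/2N]$, the principal argument of $wz$ traces the image of $[\arg(w)-\pi/2N,\arg(w)+\pi/2N]$ reduced modulo $2\pi$ into $[-\pi,\pi)$. Since $w\A_0$ has angular width $\pi/N$ and $\s_0$ has width $3\pi/N$, both $\le\pi$ when $N\ge 3$, an endpoint comparison yields that $w\A_0\subset\s_0$ iff there exists $p\in\Z$ with $\arg(w)\in[2\pi p,2\pi p+2\pi/N)$. Because $\arg(w)\in[-\pi,\pi)$ and $N\ge 3$, only $p=0$ is possible, so the condition becomes $\arg(w)\in[0,2\pi/N)$, which by \eqref{m} is equivalent to $\m(w)=0$. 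Since the range $(-N/2,\lceil N/2\rceil]$ of $\m$ contains $0$ as its only multiple of $N$, this is the same as $\m(w)\equiv 0 \ (\mo N)$.

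For (B), the equality of two closed angular sectors of the same width $\pi/N$ forces their argument ranges to agree as subsets of the unit circle, giving $\arg(w)\equiv 0\pmod{2\pi}$; since $\arg(w)\in[-\pi,\pi)$, this means $\arg(w)=0$, i.e.\ $w\in\R_+$. If $w\in\R_+$ then clearly $\m(w)=\m(w^{-1})=0$. Conversely, if $\m(w)\equiv 0$ and $\m(w)+\m(w^{-1})\equiv 0 \ (\mo N)$, then $\m(w^{-1})\equiv 0$ as well, so (A) applied to $w$ and to $w^{-1}$ yields $\arg(w),\arg(w^{-1})\in[0,2\pi/N)$. But $\arg(w^{-1})=-\arg(w)$ whenever $\arg(w)\in(-\pi,\pi)$, so $\arg(w)>0$ would force $\arg(w^{-1})<0$, contradicting $\arg(w^{-1})\ge 0$. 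Hence $\arg(w)=0$. The main technical care will lie in tracking open versus closed endpoints of the sectors in the endpoint comparison, and in handling the branch cut of $\arg$ when passing between $w$ and $w^{-1}$.
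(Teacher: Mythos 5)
Your proof is correct and reaches the same endpoint as the paper's, but via a somewhat different route. The paper works with general $t,k$ throughout: for the forward direction of (i) it plugs the two boundary points $\e(\pm\pi i/2N)$ of $\A_0$ into the containment hypothesis to obtain two explicit inequalities, then takes ceilings to extract the congruence $\m(z)\equiv t-k$; for part (ii) it converts $z\cdot\A_t=\A_k$ directly into $\arg(z)+2\pi(t-k)/N=2\pi q$ and relates that to the $\m$-conditions via the identity $\lceil x\rceil+\lceil -x\rceil\in\{0,1\}$. You instead normalize first, passing from $z$ to $w=z\cdot\e(2\pi i(t-k)/N)$ so that both assertions reduce to the case $t=k=0$, and then argue geometrically about containment of arcs on the circle; the claim $\m(w)\equiv\m(z)-(t-k)$ needed to carry this translation is easily verified from $\lceil x+\ell\rceil=\lceil x\rceil+\ell$. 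Your deduction of (B) from (A) applied to both $w$ and $w^{-1}$, combined with $\arg(w^{-1})=-\arg(w)$ on the range $[0,2\pi/N)$, also replaces the paper's direct ceiling computation. The normalization buys notational cleanliness and makes the single inequality $\arg(w)\in[0,2\pi/N)$ do all the work in both directions of (A); the paper's approach avoids the reduction step and proceeds by hand in each direction, which makes the role of the specific bound $N\ge 3$ (needed to rule out the spurious $p\ne 0$ in your argument, or the congruence $2\equiv 0$ in theirs) slightly more visible. Both are complete and sound; the endpoint-tracking care you flag at the end does indeed work out, since $\s_0$ is closed on the left and half-open on the right exactly to match the conditions $\arg(w)\ge 0$ and $\arg(w)<2\pi/N$.
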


\begin{proof}
To prove (i), first assume $z\cdot\A_t\subset\s_k$. From \eqref{aspas} it is clear that $z\cdot\A_t=\e\big(i\arg(z)\big)\cdot\A_t$. Thus, from
\eqref{semiplanos} we get
$$\e\big(i\arg(z)\big)\cdot\e(2\pi it/N)\cdot\e(-2\pi ik/N)\cdot w' \ \in \ \s_0$$
for all $w'\in\A_0$. Putting $w'=\e(-\pi i/2N)$ and then $w'=\e(\pi i/2N)$, we see that there exist $q,q'\in\Z$ such that
\begin{align}
\nonumber
&-\pi/2N\leq\arg(z)+2\pi t/N-2\pi k/N-\pi/2N+2\pi q<5\pi/2N,\\
\nonumber
&-\pi/2N\leq\arg(z)+2\pi t/N-2\pi k/N+\pi/2N+2\pi q'<5\pi/2N.
\end{align}
This implies that
\begin{align}
\nonumber
&-3/2<-N\arg(z)/2\pi-t+k-Nq\leq0,\\
\nonumber
&-1<-N\arg(z)/2\pi-t+k-Nq'\leq 1/2.
\end{align}
If $t-k\not\equiv \m(z)$, the above would imply that $\m(z)-t+k$ is congruent to both $\pm 1$, which is absurd since $N\geq 3$. Conversely, suppose $t-k\equiv \m(z)$. Let $w'\in\A_0$. Using Lemma~\ref{mprop}~(iii) and $\arg(w')\in[-\pi/2N,\pi/2N]$, we have for some $t'\in\Z$
$$\arg\!\Big(z\cdot\e\big(2\pi i\m(z)/N\big)\cdot w'\Big)+2\pi t' \ \in \ [-\pi/2N,5\pi/2N) \ \subset \ [-\pi,\pi),$$
so $t'=0$. Then, using \eqref{semiplanos}, \eqref{aspasemiplano}, and that $t\equiv \m(z)+k$, we get that
\begin{align*}
z\cdot\A_t=z\cdot\e\left(2\pi it/N\right)\cdot\A_0=z\cdot\e\big(2\pi i\m(z)/N\big)\cdot\e(2\pi ik/N)\cdot\A_0
\end{align*}
is contained in $\e(2\pi ik/N)\cdot\s_0=\s_k$. This concludes the proof of (i).

From \eqref{aspas}, we have $z\cdot\A_t=\A_k$ if and only if $\arg(z)+2\pi(t-k)/N=2\pi q$ for some $q\in\Z$. So if $z\cdot\A_t=\A_k$, then $$-N\arg(z)/2\pi=t-k+Nq \qquad \mathrm{and} \qquad -N\arg(z^{-1})/2\pi=k-t+Nq'$$
for some $q,q'\in\Z$. Hence, $\m(z)+\m(z^{-1})\equiv0$, and $\m(z)\equiv t-k$, using definition \eqref{m}. Conversely, suppose $\m(z)+\m(z^{-1})\equiv0$, and $\m(z)\equiv t-k$.  Since $\lceil x\rceil+\lceil -x\rceil$ equals either 0 or 1 depending on whether $x\in\Z$ or $x\in\R\smallsetminus\Z$ respectively, we see that $\m(z)+\m(z^{-1})\equiv0$ implies $-N\arg(z)/2\pi\in\Z$. Hence, $\m(z)\equiv t-k$ implies $\arg(z)+2\pi(t-k)/N=2\pi q$ for some $q\in\Z$.
\end{proof}

Let $\sigma\in S_r$. For $(t,t')\in\Z\times\Z$ with $1\leq t,t'\leq r+1$, consider:
\begin{align}
\xi_\sigma(t,t')\cdot\A_{\m(\xi_\sigma(t'))}&\subset \s_{\m(\xi_\sigma(t))},\label{1.2}\\
\xi_\sigma(t,t')\cdot\A_{\m(\xi_\sigma(t'))}&\not=\A_{\m(\xi_\sigma(t))},\label{1.1}\\
t'&<t.\label{2.2}
\end{align}
Using Lemma~\ref{aspadentrosemiplano}, and the definition of $\prec_\sigma$ (see conditions \eqref{orden1}, \eqref{orden2} and \eqref{orden3}), we get that $t\prec_{\sigma}t'$ if and only if $(t,t')$ satisfies condition \eqref{1.2}, and at least one of the conditions \eqref{1.1} and \eqref{2.2}. Now we prove that $\prec_\sigma$ is a strict total order on the set $\{\ell\in\Z ; 1\leq \ell\leq r+1\}$.

\begin{proposition} \label{ordentotal}
For each $\sigma\in S_r$, the relation $\prec_\sigma$ is a strict total order on the set $\{\ell\in\Z ; 1\leq \ell\leq r+1\}$.
\end{proposition}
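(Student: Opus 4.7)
The plan is to verify irreflexivity, asymmetry, trichotomy, and transitivity of $\prec_\sigma$, relying on two basic dichotomies derived from Lemma~\ref{mprop}(i). Applied to $\xi_\sigma(t')=\xi_\sigma(t)\cdot\xi_\sigma(t,t')$, it gives that $\m(\xi_\sigma(t,t'))$ is congruent modulo $N$ to either $\m(\xi_\sigma(t'))-\m(\xi_\sigma(t))$ or $\m(\xi_\sigma(t'))-\m(\xi_\sigma(t))+1$. Applied to $\xi_\sigma(t,t')\xi_\sigma(t',t)=1$, it gives $\m(\xi_\sigma(t,t'))+\m(\xi_\sigma(t',t))\equiv 0$ or $1\pmod{N}$.

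Irreflexivity is immediate: $\xi_\sigma(t,t)=1$ makes \eqref{orden1} trivially true, but \eqref{orden2} reduces to $0\equiv 1\pmod{N}$ (false for $N\ge 3$) and \eqref{orden3} to $t<t$. For asymmetry, if both $t\prec_\sigma t'$ and $t'\prec_\sigma t$ held, summing the two copies of \eqref{orden1} would give $\m(\xi_\sigma(t,t'))+\m(\xi_\sigma(t',t))\equiv 0\pmod{N}$, so \eqref{orden2} would fail on both pairs, and \eqref{orden3} would demand $t'<t$ and $t<t'$ simultaneously.

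For trichotomy I split on whether \eqref{orden1} holds for $(t,t')$ and for $(t',t)$. The case in which it fails on both is vacuous: the first dichotomy forces $\m(\xi_\sigma(t,t'))+\m(\xi_\sigma(t',t))\equiv 2\pmod{N}$, contradicting the second dichotomy for $N\ge 3$. If \eqref{orden1} holds on exactly one pair, the sum is $\equiv 1$ and \eqref{orden2} is automatic on that pair, yielding the corresponding relation. If \eqref{orden1} holds on both pairs, the sum is $\equiv 0$, \eqref{orden2} fails on both, and the tiebreaker \eqref{orden3} selects exactly one of the two relations using $t\ne t'$.

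The core difficulty, and the step I expect to be the main obstacle, is transitivity. Assuming $t\prec_\sigma t'$ and $t'\prec_\sigma t''$ (so $t\ne t''$ by asymmetry), Lemma~\ref{mprop}(iv) with $(u,v,w)=(\xi_\sigma(t),\xi_\sigma(t'),\xi_\sigma(t''))$ immediately yields \eqref{orden1} for $(t,t'')$, since the two \eqref{orden1} hypotheses add to the required congruence $\m(\xi_\sigma(t''))-\m(\xi_\sigma(t))$. If $t''<t$, then $t\prec_\sigma t''$ via \eqref{orden3}; otherwise $t<t''$ and I must prove \eqref{orden2} for $(t,t'')$. I argue by contradiction: suppose $\m(\xi_\sigma(t,t''))+\m(\xi_\sigma(t'',t))\equiv 0$. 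Then $z=\xi_\sigma(t'',t)$ satisfies $\m(z)+\m(z^{-1})\equiv 0$, so Lemma~\ref{mprop}(ii) gives $\m(z\cdot\xi_\sigma(t,t'))\equiv \m(z)+\m(\xi_\sigma(t,t'))$; using $\xi_\sigma(t'',t)\xi_\sigma(t,t')=\xi_\sigma(t'',t')$ together with \eqref{orden1} for $(t,t')$, this propagates to \eqref{orden1} for $(t'',t')$. Consequently $\m(\xi_\sigma(t',t''))+\m(\xi_\sigma(t'',t'))\equiv 0$, so \eqref{orden2} fails for $(t',t'')$ and the hypothesis $t'\prec_\sigma t''$ must rest on \eqref{orden3}, giving $t''<t'$. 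The same maneuver with $z=\xi_\sigma(t',t'')$ and the identity $\xi_\sigma(t',t'')\xi_\sigma(t'',t)=\xi_\sigma(t',t)$ propagates \eqref{orden1} to $(t',t)$, hence \eqref{orden2} fails for $(t,t')$ and $t\prec_\sigma t'$ must rest on \eqref{orden3}, giving $t'<t$. Stringing these together yields $t''<t'<t$, contradicting $t<t''$ and closing the proof.
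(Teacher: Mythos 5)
Your proof is correct and follows essentially the same line as the paper. For transitivity you use Lemma~\ref{mprop}~(iv) to get \eqref{orden1} for $(t,t'')$ exactly as in the paper, and your contradiction argument (deriving $t''<t'<t$ against $t<t''$) is the same; the only difference is that where the paper packages the key congruences into two citations of Lemma~\ref{mprop}~(v), you re-derive them inline from Lemma~\ref{mprop}~(ii), and your trichotomy case split is organized a bit more symmetrically (splitting on which of $(t,t')$, $(t',t)$ satisfies \eqref{orden1} rather than fixing $t'<t$). These are presentational differences only; the underlying ideas and lemma uses coincide.
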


\begin{proof} As $\sigma$ remains fixed along the proof, we will exclude it from the notation; furthermore, we will write
$M(t,t'):=\m(\xi_\sigma(t,t'))$ and $M(t):=\m(\xi_\sigma(t))$ for any $1\le t, t'\le r+1$.

\noindent\emph{Transitivity}. Suppose $t\prec t'$ and $t'\prec t''$. Using condition \eqref{orden1}, we have
$$M(t,t')+M(t',t'')\equiv M(t'')-M(t).$$
Then putting $u=\xi(t)$, $v=\xi(t')$, $w=\xi(t'')$ in Lemma~\ref{mprop}~(iv), and using \eqref{xitsigmaprop}, the last congruence implies condition \eqref{orden1} for $(t,t'')$.

Now suppose $(t,t'')$ satisfies neither \eqref{orden2} nor \eqref{orden3}. From (i) and \eqref{xitsigmaprop} we have $M(t,t'')+M(t'',t)\equiv0$. Also, we have that the pairs $(t,t')$, $(t',t'')$, and $(t,t'')$ satisfy condition \eqref{orden1}. Thus we have satisfied the hypotheses of Lemma~\ref{mprop}~(v) with $u=\xi(t)$, $v=\xi(t')$, and $w=\xi(t'')$, and also with $u=\xi(t'')$, $v=\xi(t')$, and $w=\xi(t)$. Then $M(t,t')+M(t',t)\equiv0$ and $M(t',t'')+M(t'',t')\equiv0$, so $t'<t$ and $t''<t'$, which contradicts $t''\ge t$. Therefore, $(t,t'')$ must satisfy at least one of the conditions \eqref{orden2} and \eqref{orden3}.

\noindent\emph{Trichotomy law}. Suppose $t'\not=t$ (say $t'<t$). If $(t,t')$ does not satisfy condition \eqref{orden1}, then $M(t,t')$ is congruent to $M(t')-M(t)+1$ by using Lemma~\ref{mprop}~(i) with $z=\xi(t,t')$ and $w=\xi(t)$. Also we have $M(t,t')+M(t',t)\equiv1$ by using Lemma~\ref{mprop}~(ii) with $z=\xi(t,t')$ and $w=\xi(t)$. Combining these two congruences we get that $(t',t)$ satisfies conditions \eqref{orden1} and \eqref{orden2}, so $t'\prec t$. If $(t,t')$ satisfies \eqref{orden1}, then $t\prec t'$.

Now if $t\prec t'$ and $t'\prec t$, condition \eqref{orden1} for $(t,t')$ and $(t',t)$ implies that the pairs $(t,t')$ and $(t',t)$ do not satisfy \eqref{orden2}, so $t<t'$ and $t'<t$, which is absurd. Also, it is clearly impossible that $t=t'$ and $t\prec t'$.
\end{proof}

\begin{corollary}\label{Cor0.5}
Let \ $\sigma\in S_r$, \ and let \ $(t, t')\in\Z\times\Z$ \ with \ $1\leq t, t'\leq r+1$. If \ $t\prec_\sigma t'$, \ then \ $\xi_\sigma(t',t)\cdot\stackrel{\circ}{\A}_{1+\m(\xi_\sigma(t))}\subset \s_{\m(\xi_\sigma(t'))}$.
\end{corollary}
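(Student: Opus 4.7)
The plan is to split the argument based on whether condition \eqref{orden2} holds for the pair $(t,t')$. Abbreviating $M(\cdot):=\m(\xi_\sigma(\cdot))$ and $M(\cdot,\cdot):=\m(\xi_\sigma(\cdot,\cdot))$, condition \eqref{orden1} (which is part of the hypothesis $t\prec_\sigma t'$) reads $M(t,t')\equiv M(t')-M(t)\ (\mo N)$. In parallel, Lemma~\ref{mprop}(i) applied to the identity $\xi_\sigma(t,t')\cdot\xi_\sigma(t',t)=1$ (see \eqref{xitsigmaprop}) forces $M(t,t')+M(t',t)$ to be congruent to either $0$ or $1$ modulo $N$, so the dichotomy is exactly whether \eqref{orden2} holds.

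If \eqref{orden2} holds, then combining with \eqref{orden1} gives $M(t',t)\equiv 1+M(t)-M(t')\ (\mo N)$. I would then apply Lemma~\ref{aspadentrosemiplano}(i) with $z=\xi_\sigma(t',t)$, cone index $1+M(t)$, and sector index $M(t')$, obtaining the stronger inclusion $\xi_\sigma(t',t)\cdot\A_{1+M(t)}\subset\s_{M(t')}$. The Corollary then follows simply from $\stackrel{\circ}{\A}_{1+M(t)}\subset\A_{1+M(t)}$.

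When \eqref{orden2} fails, the dichotomy above forces $M(t,t')+M(t',t)\equiv 0\ (\mo N)$, and combining with \eqref{orden1} yields $M(t',t)\equiv M(t)-M(t')\ (\mo N)$. The key observation is that the hypotheses of Lemma~\ref{aspadentrosemiplano}(ii) are now precisely those just verified, applied to $z=\xi_\sigma(t',t)$ with cone index $1+M(t)$ and target index $1+M(t')$. That lemma therefore yields the equality
$$
\xi_\sigma(t',t)\cdot\A_{1+M(t)}\,=\,\A_{1+M(t')}.
$$
Taking interiors (multiplication by $\xi_\sigma(t',t)\ne0$ is a homeomorphism) and invoking the middle inclusion of \eqref{aspasemiplano}, namely $\stackrel{\circ}{\A}_{1+M(t')}\subset\s_{M(t')}$, completes this case.

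The main obstacle is conceptual: in the second case the closed-arm result of Lemma~\ref{aspadentrosemiplano}(i) is insufficient, since a direct application would place $\xi_\sigma(t',t)\cdot\A_{1+M(t)}$ inside $\s_{M(t')+1}$, not $\s_{M(t')}$. One must exploit the equality in part (ii) of that lemma, which is why the statement of the Corollary genuinely requires passing to the open arm $\stackrel{\circ}{\A}_{1+M(t)}$. Note that condition \eqref{orden3} does not enter the argument at all; its role is confined to guaranteeing trichotomy of $\prec_\sigma$ in Proposition~\ref{ordentotal}.
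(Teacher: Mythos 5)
Your proof is correct and follows essentially the same route as the paper's: both split into two cases corresponding to $M(t,t')+M(t',t)\equiv 0$ or $1$, and apply Lemma~\ref{aspadentrosemiplano}(ii) in the former case (to get equality of open arms, then use the middle inclusion of \eqref{aspasemiplano}) and Lemma~\ref{aspadentrosemiplano}(i) in the latter. The only cosmetic difference is that you deduce $M(t',t)$ directly from \eqref{orden1} and the dichotomy on $M(t,t')+M(t',t)$, whereas the paper phrases its first case via the trichotomy of $\prec_\sigma$; your formulation is a bit more direct and equally valid.
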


\begin{proof}
Again $\sigma$ remains fixed along the proof, so we will use the notation adopted in the proof of Proposition~\ref{ordentotal}.

Using \eqref{xitsigmaprop} and Lemma~\ref{mprop}~(i), we have that $M(t',t)$ is congruent to either $M(t)-M(t')$ or $M(t)-M(t')+1$. If $M(t',t)\equiv M(t)-M(t')$, then $M(t',t)+M(t,t')\equiv0$ by trichotomy. Hence, Lemma~\ref{aspadentrosemiplano}~(ii) implies that $\xi(t',t)\cdot\A_{1+M(t)}=\A_{1+M(t')}$, and so
$$\xi(t',t)\cdot\stackrel{\circ}{\A}_{1+M(t)}=\stackrel{\circ}{\A}_{1+M(t')}\subset
\s_{M(t')}$$
by \eqref{aspasemiplano}. On the other hand, if $M(t',t)\equiv M(t)-M(t')+1$, then Lemma~\ref{aspadentrosemiplano}~(i) implies that $\xi(t',t)\cdot\A_{1+M(t)}\subset \s_{M(t')}$.
\end{proof}

The order $\prec_\sigma$ depends on the permutation
$\sigma\in S_r$ by definition. In general, we are not interested in studying the behavior of $\prec_\sigma$ with respect to $\sigma$, except in the following case.

\begin{lemma}\label{lema42}
Let \ $\sigma\in S_r$. \ Define \ $\widetilde{\sigma}\in S_r$ \ by putting \ $\widetilde{\sigma}(1):=\sigma(r)$, \ and
\ $\widetilde{\sigma}(j):=\sigma(j-1)$ \ for each \ $2\leq j\leq r$. \ Consider the set
$$B_\sigma:=\left\{1\leq t\leq r+1 \ \left| \ \m\big(\varepsilon_{\sigma(r)}^{(1)}\cdot\xi_\sigma(t)\big) \equiv
\m\big(\varepsilon_{\sigma(r)}^{(1)}\big)+\m(\xi_\sigma(t))\right.\right\}$$
and its complement \ $B_\sigma^c\subset\{1,\dots,r+1\}$. \ Then for any \ $t,t'\in\{1,\dots,r\}$ \ we have
\begin{enumerate}[$(i)$]
\item
$f_{t+1,\widetilde{\sigma}}=\varepsilon_{\sigma(r)}\cdot f_{t,\sigma}$, \qquad
    $\xi_{\widetilde{\sigma}}(t+1)=\varepsilon_{\sigma(r)}^{(1)}\cdot \xi_\sigma(t)$, \qquad $\xi_{\widetilde{\sigma}}(t+1,t'+1)=\xi_\sigma(t,t').$

\item If \ $t,t'\in B_\sigma$ \ or \ $t,t'\in B_\sigma^c$, \ then \ $t\prec_\sigma t'$ \ if and only if \ $t+1\prec_{\widetilde{\sigma}}t'+1$.

\item If \ $t\in B_\sigma$ \ and \ $t'\in
    B_\sigma^c$, \ then \ $t\prec_\sigma t'$ \ and \ $t'+1\prec_{\widetilde{\sigma}}t+1$.
\end{enumerate}
\end{lemma}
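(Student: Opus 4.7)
I will treat the three parts in order. Part (i) is a direct unpacking of the definitions: since $\widetilde{\sigma}$ is the cyclic shift that moves $\sigma(r)$ to the first position, the product in \eqref{ftsigma} telescopes to give $f_{t+1,\widetilde{\sigma}} = \varepsilon_{\sigma(r)} \cdot f_{t,\sigma}$. Substituting this into \eqref{xitt'sigma}--\eqref{xitsigma}, the two $\varepsilon_{\sigma(r)}$ factors cancel in the ratio defining $\xi_{\widetilde{\sigma}}(t+1,t'+1)$, while a single factor $\varepsilon_{\sigma(r)}^{(1)}$ survives in $\xi_{\widetilde{\sigma}}(t+1)$.

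Part (ii) will reduce via (i) to checking that the three conditions \eqref{orden1}--\eqref{orden3} are preserved under the shift $t \mapsto t+1$. Conditions \eqref{orden2} and \eqref{orden3} are immediate; for \eqref{orden1} one computes $\m(\xi_{\widetilde{\sigma}}(t'+1)) - \m(\xi_{\widetilde{\sigma}}(t+1)) = \m(\varepsilon_{\sigma(r)}^{(1)} \xi_\sigma(t')) - \m(\varepsilon_{\sigma(r)}^{(1)} \xi_\sigma(t))$, and because $t, t'$ lie in the same class ($B_\sigma$ or $B_\sigma^c$), the $\pm 1$ boundary-crossing correction from Lemma~\ref{mprop}(i) applies identically to both terms, so both it and $\m(\varepsilon_{\sigma(r)}^{(1)})$ cancel, leaving $\m(\xi_\sigma(t')) - \m(\xi_\sigma(t))$ as required.

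Part (iii) is the main work: here the classes differ, and I must establish $t \prec_\sigma t'$ and $t'+1 \prec_{\widetilde{\sigma}} t+1$ unconditionally, with no hypothesis on the numerical ordering of $t, t'$. The key algebraic inputs are the identity $\xi_\sigma(r, t) = \varepsilon_{\sigma(r)}^{(1)} \cdot \xi_\sigma(t)$ (immediate from $f_{r+1,\sigma} = \varepsilon_{\sigma(r)} f_{r,\sigma}$) together with the cocycle relation $\xi_\sigma(r, t') = \xi_\sigma(r, t) \cdot \xi_\sigma(t, t')$ supplied by \eqref{xitsigmaprop}. Setting $A := \m(\xi_\sigma(t')) - \m(\xi_\sigma(t))$, the plan is to pin down $\m(\xi_\sigma(t,t'))$ by two independent applications of Lemma~\ref{mprop}(i): first, $\xi_\sigma(t) \cdot \xi_\sigma(t,t') = \xi_\sigma(t')$ gives $\m(\xi_\sigma(t,t')) \equiv A$ or $A+1 \pmod{N}$; second, $\xi_\sigma(r, t) \cdot \xi_\sigma(t, t') = \xi_\sigma(r, t')$, after substituting the $B_\sigma$ and $B_\sigma^c$ conditions for $t$ and $t'$ respectively, gives $\m(\xi_\sigma(t,t')) \equiv A$ or $A-1 \pmod{N}$. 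Since $N \ge 3$, the unique common value is $A$, so \eqref{orden1} holds for $(t,t')$. An analogous pair of calculations applied to $\xi_\sigma(t',t)$ forces $\m(\xi_\sigma(t',t)) \equiv 1-A$; combined with the previous value this gives \eqref{orden2} for $(t,t')$, and hence $t \prec_\sigma t'$. The claim $t'+1 \prec_{\widetilde{\sigma}} t+1$ then follows from (i): a short computation shows $\m(\xi_{\widetilde{\sigma}}(t+1)) - \m(\xi_{\widetilde{\sigma}}(t'+1)) \equiv 1-A$, which matches $\m(\xi_{\widetilde{\sigma}}(t'+1,t+1)) = \m(\xi_\sigma(t',t)) \equiv 1-A$, verifying \eqref{orden1}; and \eqref{orden2} for the shifted pair is trivial because $\m(\xi_\sigma(t,t')) + \m(\xi_\sigma(t',t)) = 1$.

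The main obstacle will be the combinatorial bookkeeping in (iii): each application of Lemma~\ref{mprop}(i) produces a two-way choice, and the hypothesis $N \ge 3$ must be invoked repeatedly to eliminate inconsistent branches. The conceptual point is that when $t$ and $t'$ sit in different classes the $\pm 1$ boundary-crossing corrections no longer cancel, and this imbalance is precisely what forces \eqref{orden2} rather than leaving a numerical-order-dependent choice via \eqref{orden3}.
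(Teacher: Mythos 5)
Your proof is correct, and part (iii) takes a genuinely different route from the paper. The paper establishes both claims of (iii) by contradiction, relying on the trichotomy of $\prec_\sigma$ proved in Proposition~\ref{ordentotal}: it assumes $t'\prec_\sigma t$ (respectively $t+1\prec_{\widetilde{\sigma}}t'+1$) and derives an arithmetic contradiction with Lemma~\ref{mprop}~(i) (respectively $0\equiv1$). You instead argue directly: using the cocycle relation $\xi_\sigma(r,t)\cdot\xi_\sigma(t,t')=\xi_\sigma(r,t')$ together with $\xi_\sigma(r,t)=\varepsilon_{\sigma(r)}^{(1)}\xi_\sigma(t)$, you run Lemma~\ref{mprop}~(i) twice (once for $\xi_\sigma(t)\cdot\xi_\sigma(t,t')=\xi_\sigma(t')$, once for the $r$-shifted version with the $B_\sigma/B_\sigma^c$ conditions inserted), obtaining the two constraints $\m(\xi_\sigma(t,t'))\in\{A,A+1\}$ and $\m(\xi_\sigma(t,t'))\in\{A-1,A\}$ modulo $N$, which intersect in the single residue $A$ because $N\ge3$; the analogous computation gives $\m(\xi_\sigma(t',t))\equiv1-A$, so both \eqref{orden1} and \eqref{orden2} hold for $(t,t')$ unconditionally, hence $t\prec_\sigma t'$ without any appeal to the numerical order of $t,t'$. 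The second claim $t'+1\prec_{\widetilde{\sigma}}t+1$ then drops out by transporting these residues through part (i). What this buys you: your argument is logically independent of Proposition~\ref{ordentotal} and yields the slightly stronger conclusion that \eqref{orden1} and \eqref{orden2} both hold for the mixed pair, whereas the paper's proof is shorter line-by-line because it leans on the already-established total-order structure. Both are valid; parts (i) and (ii) match the paper's treatment.
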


\begin{proof}
Note that since $\xi_\sigma(r+1)=1$ and $\m(1)=0$, we have that $r+1\in B_\sigma$ for all $\sigma\in S_r$, so $B_\sigma\not=\varnothing$. The fact that $f_{t+1,\widetilde{\sigma}}=\varepsilon_{\sigma(r)}\cdot f_{t,\sigma}$ follows easily from the definition \eqref{ftsigma} of $f_{t,\sigma}$, and from the definition of $\widetilde{\sigma}$. Moreover, since $f_{r+1,\sigma}$ does not depend on $\sigma$, we obtain (i) from the definition \eqref{xitsigma} of $\xi_\sigma(t)$, and from \eqref{xitsigmaprop}.

Let us prove (ii). Using (i), we have $\m(\xi_{\widetilde{\sigma}}(t+1,t'+1))\equiv\m(\xi_\sigma(t,t'))$, so it is clear that $(t+1,t'+1)$ satisfies \eqref{orden2} for $\widetilde{\sigma}$ if and only if $(t,t')$ satisfies \eqref{orden2} for $\sigma$. But $(t+1,t'+1)$ satisfies \eqref{orden3} for $\widetilde{\sigma}$ if and only if $(t,t')$ satisfies \eqref{orden3} for $\sigma$. Now if $t,t'\in B_\sigma$, we obtain $$\m(\xi_{\widetilde{\sigma}}(t'+1))-\m(\xi_{\widetilde{\sigma}}(t+1))\equiv\m(\xi_\sigma(t'))-\m(\xi_\sigma(t))$$
by using (i), so we have that $(t+1,t'+1)$ satisfies \eqref{orden1} for $\widetilde{\sigma}$ if and only if $(t,t')$ satisfies \eqref{orden1} for $\sigma$.
If $t,t'\in B_\sigma^c$, the proof follows analogously, noting that
\begin{equation}\label{Bsigmac}
\ell\in B_\sigma^c \ \Longleftrightarrow \ \m\big(\xi_{\widetilde{\sigma}}(\ell+1)\big) \equiv
\m\big(\varepsilon_{\sigma(r)}^{(1)}\big)+\m(\xi_\sigma(\ell)) -1.
\end{equation}

To prove (iii), first we will prove $t\prec_\sigma t'$ by contradiction; suppose $t'\prec_\sigma t$. Since $t\in B_\sigma$ and $t'\in B_\sigma^c$ we have
$$\m(\xi_{\widetilde{\sigma}}(t+1))-\m(\xi_{\widetilde{\sigma}}(t'+1))\equiv\m(\xi_\sigma(t))-\m(\xi_\sigma(t'))+1,$$
where the right-hand side is congruent to $\m(\xi_\sigma(t',t))+1$ by using that $(t',t)$ satisfies condition \eqref{orden1} for $\sigma$, and so congruent to $\m(\xi_{\widetilde{\sigma}}(t'+1,t+1))+1$ by using (i). But this contradicts Lemma~\ref{mprop}~(i) since $N\ge3$.

Finally, we will prove that $t\prec_\sigma t'$ implies $t'+1\prec_{\widetilde{\sigma}}t+1$. For the sake of contradiction, suppose that $t+1\prec_{\widetilde{\sigma}} t'+1$ and $t\prec_\sigma t'$. Thus condition \eqref{orden1} implies that $\m(\xi_{\widetilde{\sigma}}(t'+1))-\m(\xi_{\widetilde{\sigma}}(t+1))$ is congruent to $\m(\xi_{\widetilde{\sigma}}(t+1,t'+1))$, and so congruent to $\m(\xi_\sigma(t,t'))$ by (i). On the other hand, since $t\in B_\sigma$ and $t'\in B_\sigma^c$ we have
$$\m(\xi_{\widetilde{\sigma}}(t+1))-\m(\xi_{\widetilde{\sigma}}(t'+1))\equiv\m(\xi_\sigma(t))-\m(\xi_\sigma(t'))+1,$$
where the right-hand side is congruent to $1-\m(\xi_\sigma(t,t'))$ since $(t,t')$ satisfies condition \eqref{orden1} for $\sigma$. Hence
$$\m(\xi_{\widetilde{\sigma}}(t+1))-\m(\xi_{\widetilde{\sigma}}(t'+1))\equiv1-\m(\xi_{\widetilde{\sigma}}(t'+1))+\m(\xi_{\widetilde{\sigma}}(t+1)),$$
a contradiction. Therefore we have proved (iii).
\end{proof}

\subsection{Domain of $f$}

The aim of this section is to define the domain of the functions $F$ and $F_0$ mentioned at the beginning of section \ref{constructionoff}. As we have anticipated in the introduction, this domain is a $(r+1)$-torus $\widehat{T}=D/\!\sim$, where $D$ is the closure of a fundamental domain for $\R^{r+1}$ under the translation action of its subgroup $\Z^{r+1}$, and $\sim$ identifies points of $D$ lying in the same $\Z^{r+1}$-orbit.

Note that $\xi_\sigma(r+1)=1$, so $\m(\xi_\sigma(r+1))=0$ for all $\sigma\in S_r$. It follows that $(r+1,t)$ satisfies condition \eqref{orden1} and \eqref{orden3} for all $t\in\{1,\dots,r\}$ and $\sigma\in S_r$.

Putting $z=\xi_\sigma(t)$ in Lemma~\ref{mprop}~(iii), we have that $$\arg\!\Big(\xi_\sigma(t)\cdot\e\big(2\pi i\m(\xi_\sigma(t))/N\big)\Big) \in [0, 2\pi/N)$$ for all $t\in\{1,\dots,r+1\}$ and $\sigma \in S_r$, so there is a unique $d_{t,\sigma}\in\Z$
such that
\begin{align}
\sum_{j=1}^t \arg\!\big(\varepsilon_{\sigma(j-1)}^{(1)}\big) - \sum_{j=1}^{r+1}\arg\!\big(\varepsilon_{\sigma(j-1)}^{(1)}\big) &+
\frac{2\pi}{N}\m(\xi_\sigma(t)) + 2\pi d_{t,\sigma} \ \in \ \Big[0, \ \frac{2\pi}{N}\Big) \label{dtsigma},
\end{align}
where $\sum_{j=1}^{r+1}\arg\!\big(\varepsilon_{\sigma(j-1)}^{(1)}\big)$ is independent of $\sigma$, and the summands corresponding to $j=1$ are $0$ by definition. Note that in \eqref{rosigma}
\begin{equation}\label{fijoporrho}
\rho_\sigma(r+1)=r+1.
\end{equation}

Considering \eqref{dtsigma}, and the permutation $\rho_\sigma\in S_{r+1}$ defined in \eqref{rosigma}, we can make the following definition.

\begin{definition}\label{phi}
For \ $t\in\{1,\dots,r+1\}$, \ $\mu=(\sigma,q,n)\in \widetilde{S}_r$, \ and \ $j\in\Z$, \ we let
$$a(t,\sigma,j):=\frac{1}{N}\big(\m(\xi_\sigma(t))+j\big)+d_{t,\sigma} \ \in \ \R.$$
Also, we define \ $\phi_{t,\mu}$ \ and \ $\phi_{r+2,\mu} \ \in \ \R^{r+1}$ \ by putting
\begin{align*}
&\phi_{t,\mu}=\phi(t,\sigma,q,n):= \begin{cases}\sum\limits_{j=1}^te_{\sigma(j-1)}\ +\ a(t,\sigma,n)\cdot e_{r+1}\  & \mathrm{if}\ t\not\prec_\sigma\rho_\sigma(q)
,\\
  \sum\limits_{j=1}^te_{\sigma(j-1)}\ +\ a(t,\sigma,n+1)\cdot e_{r+1}\ & \mathrm{if}\ t\prec_\sigma\rho_\sigma(q),
\end{cases}\\
&\phi_{r+2,\mu}=\phi(r+2,\sigma,q,n):=\sum_{j=1}^{\rho_\sigma(q)}e_{\sigma(j-1)}\ +\ a(\rho_\sigma(q),\sigma,n+1)\cdot e_{r+1}.
\end{align*}
Here, \ $e_{\sigma(0)}=e_0:=0$ \ by definition, and \ $\{e_i\}_{i=1}^{r+1}$ \ is the usual basis of $\R^{r+1}$.
\end{definition}

\begin{lemma}\label{phiind}
For each \ $\mu=(\sigma,q,n)\in \widetilde{S}_r$, \ the set \ $\{\phi_{t,\mu}\}_{t=1}^{r+2}$ \ is affinely independent.
\end{lemma}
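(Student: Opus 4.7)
The plan is to translate affine independence of $r+2$ points in $\R^{r+1}$ into linear independence of the $r+1$ difference vectors $v_t := \phi_{t,\mu}-\phi_{1,\mu}$ for $2\le t\le r+2$, and then expose a triangular structure by choosing the right basis.

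First I would rewrite each $v_t$ in the ordered basis $\mathcal{B}=(e_{\sigma(1)},e_{\sigma(2)},\dots,e_{\sigma(r)},e_{r+1})$ of $\R^{r+1}$. Since $\phi_{1,\mu}=a(1,\sigma,*)\,e_{r+1}$, Definition~\ref{phi} gives, for $2\le t\le r+1$,
\[
v_t=\sum_{j=2}^{t}e_{\sigma(j-1)}+\Delta_t\,e_{r+1},\qquad \Delta_t:=a(t,\sigma,*)-a(1,\sigma,*),
\]
so the first $r$ coordinates of $v_t$ in $\mathcal{B}$ are $(\underbrace{1,\dots,1}_{t-1},0,\dots,0)$. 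Writing $q':=\rho_\sigma(q)\in\{1,\dots,r+1\}$, the vector $v_{r+2}$ has analogous shape, with $q'-1$ initial ones in its first $r$ coordinates.

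Second, I would make the key column-reduction step. The first $r$ coordinates of $v_{r+2}$ and $v_{q'}$ are identical, so $v_{r+2}-v_{q'}$ lies in $\R\cdot e_{r+1}$. By Proposition~\ref{ordentotal} the relation $\prec_\sigma$ is strict, hence $q'\not\prec_\sigma q'$; thus Definition~\ref{phi} uses $a(q',\sigma,n)$ for $\phi_{q',\mu}$ and $a(q',\sigma,n+1)$ for $\phi_{r+2,\mu}$, which gives
\[
v_{r+2}-v_{q'}=\bigl(a(q',\sigma,n+1)-a(q',\sigma,n)\bigr)e_{r+1}=\tfrac{1}{N}\,e_{r+1}\neq 0.
\]
Replacing column $v_{r+2}$ by $v_{r+2}-v_{q'}$ does not change the determinant of the matrix $[v_2\mid\cdots\mid v_{r+2}]$.

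Third, I would finish by a cofactor expansion along the modified last column, which is $(1/N)e_{r+1}$. What remains is the $r\times r$ submatrix (in basis $(e_{\sigma(1)},\dots,e_{\sigma(r)})$) whose columns, for $t=2,\dots,r+1$, are $(\underbrace{1,\dots,1}_{t-1},0,\dots,0)^{\mathsf T}$; this is upper triangular with $1$'s on the diagonal, so its determinant is $1$. Consequently the full $(r+1)\times(r+1)$ determinant equals $1/N\neq 0$, which proves that $\{v_t\}_{t=2}^{r+2}$ is linearly independent, i.e.\ $\{\phi_{t,\mu}\}_{t=1}^{r+2}$ is affinely independent.

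The only delicate point is the identity $v_{r+2}-v_{q'}=(1/N)e_{r+1}$: everything else is bookkeeping. The main obstacle is therefore checking that the two case-splits in the definition of $\phi_{t,\mu}$ cooperate correctly at $t=q'$, which is precisely where strictness of $\prec_\sigma$ (Proposition~\ref{ordentotal}) and the formula $a(t,\sigma,j+1)-a(t,\sigma,j)=1/N$ come into play. The rest of the argument is a transparent triangular determinant computation.
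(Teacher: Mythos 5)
Your proof is correct and follows essentially the same approach as the paper: pass to difference vectors, isolate the $\tfrac{1}{N}e_{r+1}$ contribution coming from $\phi_{r+2,\mu}-\phi_{\rho_\sigma(q),\mu}$ (using that $\prec_\sigma$ is strict so $\phi_{\rho_\sigma(q),\mu}$ falls in the first case of the definition), and observe the triangular structure in the $e_{\sigma(j)}$ directions. The only cosmetic difference is that the paper centers the differences at $\phi_{r+2,\mu}$ whereas you center at $\phi_{1,\mu}$; the underlying determinant computation is the same.
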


\noindent Recall that a subset $\{w_i\}_{i=1}^n$ of a real vector space $V$ is affinely independent if and only if the set
$\{w_i-w_j\}_{\substack{1\leq i\leq n\\ i\not=j}}$ is $\R$-linearly independent for some fixed $j\in\{1,\dots,n\}$.

\begin{proof}
We will prove Lemma~\ref{phiind} by showing that $\{\phi_{t,\mu}-\phi_{r+2,\mu}\}_{t=1}^{r+1}$ is linearly independent in $\R^{r+1}$. Of course, the set $\{e_{\sigma(t-1)}\}_{t=1}^{r+1}$ is affinely independent in $\R^{r+1}$. This implies that the set $\{v_{t}\}_{\substack{1\leq
t\leq r+1\\ t\not= \rho_\sigma(q)}}$ is linearly independent, where $v_t$ equals
$\sum_{j=1}^te_{\sigma(j-1)} - \sum_{j=1}^{\rho_\sigma(q)}e_{\sigma(j-1)}$,
with the product $v_{t}e_{r+1}^T=0$ (here, $e_{r+1}^T$ denote the transpose of $e_{r+1}$). Then, the determinant of the $(r+1)\times(r+1)$ matrix
\begin{equation}\label{19}
(v_{t} \ ; \ \phi_{\rho_\sigma(q),\mu}-\phi_{r+2,\mu})_{\substack{1\leq t\leq r+1\\ t\not= \rho_\sigma(q)}},
\end{equation}
whose columns are the vectors $v_{t}$ and $\phi_{\rho_\sigma(q),\mu}-\phi_{r+2,\mu}$, is zero because
\begin{equation}\label{1.16x}
\phi_{\rho_\sigma(q),\mu}-\phi_{r+2,\mu}=-\frac{1}{N}\cdot e_{r+1},
\end{equation}
by Definition~\ref{phi}. Furthermore,
\begin{align}
\nonumber
\phi_{t,\mu}-\phi_{r+2,\mu}:= \begin{cases}v_{t} + \big(a(t,\sigma,n)-a(\rho_\sigma(q),\sigma,n+1)\big)\cdot e_{r+1}\  & \mathrm{if}\ \rho_\sigma(q)\prec_\sigma t ,\\
  v_{t} + \big(a(t,\sigma,n+1)-a(\rho_\sigma(q),\sigma,n+1)\big)\cdot e_{r+1}\ & \mathrm{if}\ t\prec_\sigma\rho_\sigma(q),
\end{cases}
\end{align}
for all $1\leq t\leq r+1$ with $t\not=\rho_\sigma(q)$. Hence, we can transform the matrix in \eqref{19} into the matrix $(\phi_{t,\mu}-\phi_{r+2,\mu})_{t=1}^{r+1}$
using elementary operations. Therefore, the set $\{\phi_{t,\mu}-\phi_{r+2,\mu}\}_{t=1}^{r+1}$ is linearly independent in
$\R^{r+1}$.
\end{proof}

The above lemma implies that every non-empty subset of $\{\phi_{t,\mu}\}_{t=1}^{r+2}$ is affinely independent in $\R^{r+1}$.

Now we establish some notation. If $w_1,\dots,w_\ell$ are elements of a real vector space $W$, then the (closed) polytope they generate is the set of convex sums
$$
P=P(w_1,\dots,w_\ell):=\Big\{ w\in W \Big|\,w=\sum_{t=1}^{\ell} b_tw_t,\ \
b_t\ge0,\ \ \sum_{t=1}^{\ell}b_t=1  \Big\}
$$
($P(\varnothing):=\varnothing$). In general, if $w=\sum_{t=1}^{\ell} b_tw_t$, $b_t\in\R$, and $\sum_{t=1}^{\ell}b_t=1$, then the $b_t$ are called barycentric coordinates of $w$
with respect to the set $w_1,\dots,w_\ell$. If $w_1,\dots,w_\ell$ is affinely independent, the barycentric coordinates of $w$ are uniquely determined by $w$, so we can write $b_t=b_t(w)$.

For any $\sigma\in S_r$ and any $t\in\{1,\dots,r+1\}$, let
$$v_t:=\sum\limits_{j=1}^te_{\sigma(j-1)}+v_t^{(r+1)}e_{r+1}\,, \qquad\qquad w_t:=\sum\limits_{j=1}^te_{\sigma(j-1)}+w_t^{(r+1)}e_{r+1}.$$
If $x,y\in\R^{r+1}$ are vectors such that $x^{(j)}=y^{(j)}$ $(1\leq j\leq r)$, then we claim that
\begin{equation}\label{bari=}
x=\sum_{t=1}^{r+1} b_tv_t, \quad y=\sum_{t=1}^{r+1} b'_tw_t, \quad \sum_{t=1}^{r+1}b_t=\sum_{t=1}^{r+1}b'_t=1 \qquad \Longrightarrow \qquad b_t=b'_t
\end{equation}
for any $t\in\{1,\dots,r+1\}$. To prove \eqref{bari=}, it is easy to verify that
$$b_{r+1}+\dots+b_{j+1}=b'_{r+1}+\dots+b'_{j+1} \qquad\qquad (1\leq j\leq r)$$
by multiplying $x$ and $y$ by $e_{\sigma(j)}^T$ since $x^{(j)}=y^{(j)}$ $(1\leq j\leq r)$. Hence, $b_t=b'_t$ $(2\leq j\leq r+1)$, and also $b_1=b'_1$ since the sum of the barycentric coordinates is 1.

Using the above notation, we define the polytopes
\begin{equation}\label{P}
P_{1,\mu}=P_{1}(\sigma,q,n):=P\Big(\{\phi_{t,\mu}\}_{t=1}^{r+1}\Big), \qquad
P_{2,\mu}=P_{2}(\sigma,q,n):=P\Big(\{\phi_{t,\mu}\}_{\substack{1\leq t\leq r+2\\ t\not=\rho_\sigma(q)}}\Big)
\end{equation}
for $\mu=(\sigma,q,n)\in \widetilde{S}_r$. The next lemma will allow us to give an alternative description of these polytopes.

\begin{lemma}\label{lema8}
Let \ $\mu=(\sigma,q,n)\in \widetilde{S}_r$. \ If \ $x^{(1)},\dots,x^{(r)}\in[0,1]$ \ satisfy \ $x^{(\sigma(1))}\geq\dots\geq x^{(\sigma(r))}$, \
there exist unique \ $y_1, y_2\in\R$ \ such that \ $(x^{(1)},\dots,x^{(r)},y_1)\in P_{1,\mu}$ \ and \ $(x^{(1)},\dots,x^{(r)},y_2)\in P_{2,\mu}$. \ Furthermore, such \ $y_1$ \ and \ $y_2$ \ satisfy \ $y_1\leq y_2$.
\end{lemma}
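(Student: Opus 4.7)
\emph{Proof plan.} The central idea is to project everything to the first $r$ coordinates, where both polytopes collapse to the same simplex. Under the projection $\R^{r+1}\to\R^r$ that forgets the last coordinate, Definition~\ref{phi} gives that for $1\le t\le r+1$ the vertex $\phi_{t,\mu}$ projects to the indicator vector of $\{\sigma(1),\dots,\sigma(t-1)\}\subset\{1,\dots,r\}$, independently of $q$, $n$ and of whether $t\prec_\sigma\rho_\sigma(q)$. Moreover, $\phi_{r+2,\mu}$ has the same first $r$ coordinates as $\phi_{\rho_\sigma(q),\mu}$. Thus the projections of the vertex sets of $P_{1,\mu}$ and $P_{2,\mu}$ coincide as subsets of $\R^r$.

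Given $(x^{(1)},\dots,x^{(r)})$ as in the statement, I will solve the linear system for the barycentric coordinates $b_t$ of a point $(x^{(1)},\dots,x^{(r)},y_1)\in P_{1,\mu}$. Equating the $\sigma(k)$-th coordinates yields $x^{(\sigma(k))}=\sum_{t=k+1}^{r+1}b_t$ for $1\le k\le r$, which has the unique solution
\[
b_1=1-x^{(\sigma(1))},\qquad b_{k+1}=x^{(\sigma(k))}-x^{(\sigma(k+1))}\ (1\le k\le r-1),\qquad b_{r+1}=x^{(\sigma(r))}.
\]
The hypotheses $x^{(j)}\in[0,1]$ and $x^{(\sigma(1))}\ge\dots\ge x^{(\sigma(r))}$ guarantee $b_t\ge 0$, and $y_1:=\sum_{t=1}^{r+1}b_t\,\phi_{t,\mu}^{(r+1)}$ is then uniquely determined.

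For $P_{2,\mu}$, the analogous system is solved by barycentric coordinates $b'_t$ indexed by $t\ne\rho_\sigma(q)$, $1\le t\le r+2$. The uniqueness property~\eqref{bari=}---applicable since $\{\phi_{t,\mu}\}_{t\ne\rho_\sigma(q)}$ is affinely independent by Lemma~\ref{phiind}---forces $b'_t=b_t$ for $t\notin\{\rho_\sigma(q),r+2\}$ and $b'_{r+2}=b_{\rho_\sigma(q)}$. Hence
\[
y_2-y_1=b_{\rho_\sigma(q)}\bigl(\phi_{r+2,\mu}^{(r+1)}-\phi_{\rho_\sigma(q),\mu}^{(r+1)}\bigr)=\frac{b_{\rho_\sigma(q)}}{N}\ge 0
\]
by \eqref{1.16x}, giving $y_1\le y_2$.

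The only obstacle is conceptual rather than computational: one needs to notice that, after projection to $\R^r$, the two polytopes sit above a common simplex and differ only in how the fiber above the image of $\phi_{\rho_\sigma(q),\mu}$ is presented, so that the gap $y_2-y_1$ is governed by the single relation \eqref{1.16x}. Once this reduction is made, the rest is elementary linear algebra.
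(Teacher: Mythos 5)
Your proof is correct and follows essentially the same route as the paper's: you compute the same explicit barycentric coordinates $b_1=1-x^{(\sigma(1))}$, $b_{k+1}=x^{(\sigma(k))}-x^{(\sigma(k+1))}$, $b_{r+1}=x^{(\sigma(r))}$, invoke \eqref{bari=} for uniqueness and to match the coordinates across the two vertex sets, and use \eqref{1.16x} to conclude $y_2-y_1=b_{\rho_\sigma(q)}/N\ge 0$. The only cosmetic difference is that you frame the key observation explicitly as ``both polytopes project onto the same simplex in $\R^r$,'' which the paper leaves implicit in its use of \eqref{bari=}.
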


\begin{proof}
Put $b_1:=1-x^{(\sigma(1))}$, $b_{t}:=x^{(\sigma(t-1))}-x^{(\sigma(t))}$ $(2\leq t\leq r)$, and $b_{r+1}:=x^{(\sigma(r))}$. Clearly, $b_t\geq 0$
for all $1\leq t\leq r+1$, and also $\sum_{t=1}^{r+1}b_t=1$. Thus $v:=\sum_{t=1}^{r+1}b_t\phi_{t,\mu} \in  P_{1,\mu}$, and we can check that $ve_{\sigma(j)}^T=x^{(\sigma(j))}$ for each $1\leq j\leq r$.
Putting $y_1:=ve_{r+1}^T$, uniqueness follows from \eqref{bari=}. Analogously, existence and uniqueness of $y_2$ follow from \eqref{1.16x}.

Now let us prove the last statement of Lemma~\ref{lema8}. Using \eqref{bari=}, note that the vectors $v_{y_1}:=(x^{(1)},\dots,x^{(r)},y_1)$ and $v_{y_2}:=(x^{(1)},\dots,x^{(r)},y_2)$ have equal barycentric coordinates with respect to the vertices $\{\phi_{t,\mu}\}_{t=1}^{r+1}$ and $\{\phi_{t,\mu}\}_{\substack{1\leq t\leq r+2\\ t\not=\rho_\sigma(q)}}$ respectively. Since these sets
differ only in the elements $\phi_{\rho_\sigma(q),\mu}$ and $\phi_{r+2,\mu}$, we deduce that $v_{y_2}-v_{y_1}$ equals $b(\phi_{r+2,\mu}-\phi_{\rho_\sigma(q),\mu})=\frac{b}{N}e_{r+1},$
where $b$ is a barycentric coordinate of $v_{y_2}$ with respect to $\{\phi_{t,\mu}\}_{\substack{1\leq t\leq r+2\\ t\not=\rho_\sigma(q)}}$. Therefore,
$(v_{y_2}-v_{y_1})e_{r+1}^T=y_2-y_1\geq 0$.
\end{proof}

Since both $y_1$ and $y_2$ in Lemma~\ref{lema8} depend on $\mu\in \widetilde{S}_r$ and on $x^{(1)},\dots,x^{(r)}\in[0,1]$, we shall write $y_{i,\mu}(x)=y_{i}(\sigma,q,n)(x):=y_{i}$ for any $i=1,2$, $\mu=(\sigma,q,n)\in \widetilde{S}_r$, and $x\in[0,1]^r\times\R$ such that $x^{(\sigma(1))}\geq x^{(\sigma(2))}\geq\dots\geq x^{(\sigma(r))}$.

From the proof of Lemma~\ref{lema8}, note that $v=v(x)=\sum_{t=1}^{r+1}b_t(x)\phi_{t,\mu}$ is continuous in $x$ since each of the $b_t=b_t(x)$ is continuous in $x$. Therefore, $y_1(x)=v(x) e_{r+1}^T$ is also continuous. The same holds for $y_2(x)$.

\begin{definition}\label{Dsigmaqn}
For \ $\mu=(\sigma,q,n)\in \widetilde{S}_r$, \ define
$$
D_{\mu}=D(\sigma,q,n):=\left\{x\in[0,1]^r\times\R \left|\begin{array}{c}
x^{(\sigma(1))}\geq x^{(\sigma(2))}\geq\dots\geq x^{(\sigma(r))},\\
y_{1,\mu}(x)\leq x^{(r+1)}\leq y_{2,\mu}(x).
\end{array}\right.\right\}.
$$
\end{definition}

\noindent Lemma~\ref{lema8} implies that $y_{i,\mu}$ is an affine function, \ie
$$
(1-t)y_{i,\mu}(x)+ty_{i,\mu}(z)=y_{i,\mu}\big((1-t)x + tz\big)\qquad (i=1,2; \ t\in[0,1]; \ x,z\in D_{\mu}),
$$
since $P_{1,\mu}$ and $P_{2,\mu}$ are convex sets. Hence, $D_{\mu}$ is a convex set. Furthermore, $\phi_{t,\mu}\in D_{\mu}$ $(1\leq t\leq r+2)$. In order to verify this, note that
$$
\phi_{t,\mu}^{(\sigma(j))} = \begin{cases} 1 \  & \mathrm{if}\ j<t ,\\
  0 \ & \mathrm{if} \ j\geq t,
\end{cases} \qquad \phi_{r+2,\mu}^{(\sigma(j))} = \begin{cases} 1 \ & \mathrm{if} \ j<\rho_\sigma(q),\\
  0 \ & \mathrm{if} \ j\geq \rho_\sigma(q)
\end{cases}\qquad (1\leq j,t\leq r+1),
$$
and
$$\phi_{t,\mu}^{(r+1)}=y_{1,\mu}(\phi_{t,\mu}), \qquad\qquad \phi_{r+2,\mu}^{(r+1)}=y_{2,\mu}(\phi_{r+2,\mu}) \qquad\qquad (1\leq t\leq r+1)$$
since $\phi_{t,\mu}\in P_{1,\mu}$ and $\phi_{r+2,\mu}\in P_{2,\mu}$. Thus, $P(\phi_{1,\mu},\dots,\phi_{r+2,\mu})\subset D_{\mu}$. On the other hand, each $v\in D_{\mu}$ is contained in the straight line passing through
$$\big(v^{(1)},\dots,v^{(r)},y_{1,\mu}(v)\big)\in P_{1,\mu} \qquad\qquad \mathrm{and} \qquad\qquad\big(v^{(1)},\dots,v^{(r)},y_{2,\mu}(v)\big)\in P_{2,\mu},$$
so
\begin{equation}\label{Dsigmaqn2}
P(\phi_{1,\mu},\phi_{2,\mu},\dots,\phi_{r+2,\mu}) = D_{\mu}.
\end{equation}
Therefore Lemma~\ref{phiind} implies that the $D_\mu$ are $(r+1)$-simplices.

We shall see next that we can put the $D_{\mu}=D(\sigma,q,n)$ a top one another so that every intersection of two adjacent simplices is an $r$-simplex. We shall do this by fixing $\sigma\in S_r$ and varying $q$ and $n$. More precisely, fix $\sigma\in S_r$. From \eqref{rosigma} and Definition~\ref{phi} we have
\begin{equation}\label{identidadesphi}
\phi(t,\sigma,q-1,n)=\phi(t,\sigma,q,n) \qquad \mathrm{and} \qquad \phi(\rho_{\sigma(q)},\sigma,q-1,n)=\phi(r+2,\sigma,q,n)
\end{equation}
$(1\leq t\leq r+1, \ t\not=\rho_\sigma(q); \ 2\leq q\leq r+1; \ 0\leq n\leq N-1)$. Also
$$\phi(t,\sigma,r+1,n+1)=\phi(t,\sigma,1,n) \qquad \mathrm{and} \qquad \phi(\rho_{\sigma}(1),\sigma,r+1,n+1)=\phi(r+2,\sigma,1,n)$$
$(1\leq t\leq r+1, \ t\not=\rho_{\sigma}(1); \ 0\leq n\leq N-2)$. Thus we have
\begin{align*}
&P_{1}(\sigma,q-1,n)=P_{2}(\sigma,q,n) &&(2\leq q\leq r+1; \ 0\leq n\leq N-1),\\
&P_{1}(\sigma,r+1,n+1)=P_{2}(\sigma,1,n) &&(0\leq n\leq N-2).
\end{align*}
Applying Lemma~\ref{lema8} to the last two identities, we get the following chain of inequalities for any $x\in [0,1]^r\times\R$ such that $x^{(\sigma(1))}\geq\dots\geq x^{(\sigma(r))}$.
\begin{align}
\nonumber
&y_{1}(\sigma,r+1,0)(x)\leq y_{1}(\sigma,r,0)(x)\leq\dots\leq y_{1}(\sigma,1,0)(x)\leq \\
\nonumber
&y_{1}(\sigma,r+1,1)(x)\leq y_{1}(\sigma,r,1)(x)\leq\dots\leq y_{1}(\sigma,1,1)(x)\leq \\
\nonumber
&\vdots\\
&y_{1}(\sigma,r+1,N-1)(x)\leq y_{1}(\sigma,r,N-1)(x)\leq\dots\leq y_{2}(\sigma,1,N-1)(x). \label{25}
\end{align}
Note that this chain ends with $y_{2}(\sigma,1,N-1)(x)$, the only ``link'' of the chain indexed by 2 instead 1. However, we can index $y_{2}(\sigma,1,N-1)(x)$ by 1 since
\begin{align}\nonumber
P_{2}(\sigma,1,N-1)=P\Big(\big\{\phi(t,\sigma,1,N-1)\big\}_{\substack{1\leq t\leq r+2\\ t\not=\rho_{\sigma(1)}}}\Big)
&=P\Big(\big\{e_{r+1}+\phi(t,\sigma,r+1,0)\big\}_{t=1}^{r+1}\Big)\\
&=e_{r+1}+P_{1}(\sigma,r+1,0), \label{politoposciclicos}
\end{align}
so $y_{2}(\sigma,1,N-1)(x)=1+y_{1}(\sigma,r+1,0)(x)$,
where $y_{1}(\sigma,r+1,0)(x)$ is the first link of the chain. Hence, in the following we shall write
\begin{equation}\label{ysigmaqn}
y_{\mu}(x):=y_{1,\mu}(x) \qquad (\mu\in \widetilde{S}_r; \ x\in[0,1]^{r}\times\R \ \mathrm{with} \ x^{(\sigma(1))}\geq\dots\geq x^{(\sigma(r))}).
\end{equation}

Considering the above, put
\begin{equation}\label{D}
D:=\bigcup_{\sigma\in S_r}\triangle_\sigma=\bigcup_{\mu\in \widetilde{S}_r} D_{\mu},
\end{equation}
where
$$
\triangle_\sigma:=\left\{x\in [0,1]^r\times\R \left|\begin{array}{c}
 x^{(\sigma(1))}\geq x^{(\sigma(2))}\geq\dots\geq x^{(\sigma(r))},\\
y(\sigma,r+1,0)(x)\leq x^{(r+1)}\leq 1+y(\sigma,r+1,0)(x).
\end{array}\right.\right\}.
$$
We are interested in three properties of $D$. In the first place, $D$ is a finite union of compact sets, and so is a compact subset of
$\R^{r+1}$. Secondly, $D$ is the topological closure of a fundamental domain for $\R^{r+1}$ under the translation action of its subgroup $\Z^{r+1}$. Indeed, the set
\begin{equation}\label{D2}
\mathfrak{D}:=\bigcup_{\sigma\in S_r}\mathfrak{D}_\sigma,
\end{equation}
$$
\mathfrak{D}_\sigma:=\left\{x\in [0,1)^r\times\R \left|\begin{array}{c}
x^{(\sigma(1))}\geq x^{(\sigma(2))}\geq\dots\geq x^{(\sigma(r))},\\
y(\sigma,r+1,0)(x)\leq x^{(r+1)}< 1+y(\sigma,r+1,0)(x).
\end{array}\right.\right\},
$$
is such a fundamental domain. The quotient space
\begin{equation}\label{torocanonico}
\widehat{T}:=D/\sim
\end{equation}
is homeomorphic to the standard $(r+1)$-torus $\R^{r+1}/\Z^{r+1}$, where $\sim$ is the identification of elements in the same orbit with respect to this action.

Finally, we will show that the $D_{\mu}$ form a simplicial decomposition of $D$. Since we have \eqref{D}, we only need to verify that the $D_{\mu}$ intersect each other in faces. Recall that a face of a polytope $P$ is the polytope generated by a subset of only its vertices. Now we need some technical remarks.

\begin{lemma}\label{Lema21previos}
Let \ $\mu=(\sigma,q,n),\, \mu'=(\sigma',q',n')\in \widetilde{S}_r$. \ Put
$$\mathfrak{B}_{\sigma,\sigma'}:=\{1\}\cup\Big\{2\le t\le r+1 \ \Big| \ \{\sigma(j-1) | 2\leq j\leq t\}=\{\sigma'(j-1) | 2\leq j\leq t\}\Big\}.$$
Then the following hold.
\begin{enumerate}[$(i)$]
\item If \ $w\in\R^r$ \ satisfies \ $w^{(\sigma(1))}\geq\dots\geq w^{(\sigma(r))}$ \ and \ $w^{(\sigma'(1))}\geq\dots\geq
w^{(\sigma'(r))}$, \
then \ $w^{(\sigma(j))}=w^{(\sigma'(j))}$ \ for all \ $1\leq j\leq r$.

\item Let \ $v\in[0,1]^r\times\R$ \ with \ $v^{(\sigma(1))}\geq\dots\geq v^{(\sigma(r))}$ \ and \ $v^{(\sigma'(1))}\geq\dots\geq v^{(\sigma'(r))}$. \ If
$$v=\sum_{t=1}^{r+1}b_t\phi_{t,\mu} \qquad {\text or} \qquad v=(b_{\rho_{\sigma}(1)}/N)e_{r+1}+\sum_{t=1}^{r+1}b_t\phi(t,\sigma,1,N-1)$$
for some \ $b_t\in\R$, \ then \ $b_t=0$ \ whenever \ $t\not\in\mathfrak{B}_{\sigma,\sigma'}$.

\item If \ $t, t'\in \mathfrak{B}_{\sigma,\sigma'}$, \ then \ $t\prec_\sigma t'$ \ if and only if \ $t\prec_{\sigma'} t'$.

\item If \ $t\in\mathfrak{B}_{\sigma,\sigma'}$, \ then \ $a(t,\sigma,\ell)=a(t,\sigma',\ell)$ \ for any $\ell\in\Z$.
\end{enumerate}
\end{lemma}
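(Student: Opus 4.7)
The plan is to address parts (i)--(iv) in order, each step leaning on the preceding ones. Part (i) is purely combinatorial: the sequences $(w^{(\sigma(1))},\dots,w^{(\sigma(r))})$ and $(w^{(\sigma'(1))},\dots,w^{(\sigma'(r))})$ are both weakly decreasing rearrangements of the multiset $\{w^{(1)},\dots,w^{(r)}\}$, and such a rearrangement is unique as a sequence of values, so the two must agree entry-by-entry.

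Part (ii) is the main technical step, and my plan is to project onto the first $r$ coordinates. From Definition~\ref{phi} the first $r$ coordinates of $\phi_{t,\mu}$ and of $\phi(t,\sigma,1,N-1)$ both equal the characteristic vector of $\{\sigma(1),\dots,\sigma(t-1)\}$; the additional $(b_{\rho_\sigma(1)}/N)e_{r+1}$ summand in the second representation contributes only to the $(r+1)$-th coordinate, so both representations receive identical treatment. Writing $v=\sum_{t=1}^{r+1}b_t\phi_{t,\mu}$ and projecting onto coordinate $\sigma(i)$ for $1\le i\le r$ gives $v^{(\sigma(i))}=\sum_{t=i+1}^{r+1}b_t$, which by telescoping yields $b_t=v^{(\sigma(t-1))}-v^{(\sigma(t))}$ for $2\le t\le r$. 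Since $1$ and $r+1$ always lie in $\mathfrak{B}_{\sigma,\sigma'}$ by definition, only the cases $t\in\{2,\dots,r\}$ matter. If $t\not\in\mathfrak{B}_{\sigma,\sigma'}$ and $v^{(\sigma(t-1))}>v^{(\sigma(t))}$ strictly, then $\{\sigma(1),\dots,\sigma(t-1)\}=\{k\,:\,v^{(k)}>v^{(\sigma(t))}\}$ is uniquely recoverable from $v$, forcing equality with $\{\sigma'(1),\dots,\sigma'(t-1)\}$---a contradiction. Hence $v^{(\sigma(t-1))}=v^{(\sigma(t))}$ and $b_t=0$.

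Parts (iii) and (iv) share the observation that whenever $t\in\mathfrak{B}_{\sigma,\sigma'}$, commutativity of multiplication in the unit group gives $f_{t,\sigma}=f_{t,\sigma'}$, and hence $\xi_\sigma(t)=\xi_{\sigma'}(t)$. For (iii), applying this to both $t$ and $t'$ yields $\xi_\sigma(t,t')=\xi_{\sigma'}(t,t')$ via \eqref{xitsigmaprop}, so the three defining conditions \eqref{orden1}, \eqref{orden2}, \eqref{orden3} agree on the pair $(t,t')$ for $\sigma$ and $\sigma'$. For (iv), one has $\m(\xi_\sigma(t))=\m(\xi_{\sigma'}(t))$, and the sum $\sum_{j=1}^t\arg(\varepsilon_{\sigma(j-1)}^{(1)})$ depends only on the multiset $\{\sigma(1),\dots,\sigma(t-1)\}$, which matches for $\sigma$ and $\sigma'$; the unique characterisation of $d_{t,\sigma}$ in \eqref{dtsigma} then forces $d_{t,\sigma}=d_{t,\sigma'}$, and therefore $a(t,\sigma,\ell)=a(t,\sigma',\ell)$.

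The hard part will be (ii): one must see that the $e_{r+1}$ summand in the second representation is harmless on the first $r$ coordinates, and then carefully invoke uniqueness of the top-$(t-1)$ index set---valid only when there is a strict gap in the sorted values---to conclude $b_t=0$ for $t\not\in\mathfrak{B}_{\sigma,\sigma'}$.
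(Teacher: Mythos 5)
Your proposal is correct and follows essentially the same route as the paper: (i) by uniqueness of the sorted sequence, (ii) via the telescoping identity $b_t=v^{(\sigma(t-1))}-v^{(\sigma(t))}$ together with (i) to force the gap to vanish when $t\not\in\mathfrak{B}_{\sigma,\sigma'}$, and (iii), (iv) from $\xi_\sigma(t)=\xi_{\sigma'}(t)$ (resp.\ $d_{t,\sigma}=d_{t,\sigma'}$) whenever $t\in\mathfrak{B}_{\sigma,\sigma'}$. One small remark: in (ii) the step ``forcing equality with $\{\sigma'(1),\dots,\sigma'(t-1)\}$'' tacitly uses part (i) (to see that $v^{(\sigma'(t-1))}>v^{(\sigma'(t))}$ as well), so that reference should be made explicit when you write it out.
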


\begin{proof}
To prove (i), for the sake of contradiction suppose that $w^{(\sigma(j))}\not=w^{(\sigma'(j))}$ for some $j\in\{1,\dots,r\}$ (say $w^{(\sigma(j))}<w^{(\sigma'(j))}$). Since $w^{(\sigma(j))}<w^{(\sigma'(j))}\leq w^{(\sigma'(i))}$ for all $i\in\{1,\dots,j\}$, there are at least $j$ coordinates of $w$ greater than $w^{(\sigma(j))}$. But this contradicts $w^{(\sigma(1))}\ge\dots\geq w^{(\sigma(j))}$,
which implies that there are at most $j-1$ of such coordinates.

Let us prove (ii). If $t\in\{2,\dots,r+1\}$ is such that $\{\sigma(j-1) \ | \ 2\leq j\leq t\}\not=\{\sigma'(j-1) \ | \ 2\leq j\leq t\}$ \ (note that $t\not=r+1$) then there exists $j\in\{2,\dots,t\}$ such that $\sigma(j-1)=\sigma'(i)$ with $i\in\{t,\dots,r\}$. This implies that
$$1\geq\dots\geq v^{(\sigma'(j-1))}\geq\dots\geq v^{(\sigma'(t-1))}\geq\dots\geq v^{(\sigma(j-1))}\geq\dots\geq 0.$$
Since $v^{(\sigma'(j-1))}=v^{(\sigma(j-1))}$ \big(by (i)\big), we have $v^{(\sigma'(t-1))}=v^{(\sigma'(t))}$. Hence,
$v^{(\sigma(t-1))}=v^{(\sigma(t))}$, and so we conclude $b_t=0$ from the identity
$$
v^{(\sigma(\ell))}=ve_{\sigma(\ell)}^T=b_{r+1}+\dots +b_{\ell+1} \qquad\qquad (1\le\ell\le r).
$$

To show (iii), note that $\xi_\sigma(t)=\xi_{\sigma'}(t)$ and $\xi_\sigma(t')=\xi_{\sigma'}(t')$ for all $t,t'\in\mathfrak{B}_{\sigma,\sigma'}$. Thus,
assertion (iii) follows from conditions \eqref{orden1}, \eqref{orden2}, and \eqref{orden3}.

Assertion (iv) follows directly from \eqref{dtsigma} and from Definition~\ref{phi}.
\end{proof}

\begin{lemma}\label{Lema21a}
Let \ $\mu=(\sigma,q,n),\,\mu'=(\sigma',q',n')\in \widetilde{S}_r$. \ If \ $A\subset\{\phi_{t,\mu}\}_{t=1}^{r+1}$ \ and \ $A'\subset\{\phi_{t,\mu'}\}_{t=1}^{r+1}$, \ then \ $P(A)\cap P(A') = P(A\cap A')$. \
Also, this assertion remains valid if we replace both (or one of the) sets \ $\{\phi_{t,\mu}\}_{t=1}^{r+1}$ \ and \ $\{\phi_{t,\mu'}\}_{t=1}^{r+1}$ \ by \ $\{\phi(t,\sigma,1,N-1)\}_{\substack{1\leq t\leq r+2\\ t\not=\rho_\sigma(1)}}$ \ and \ $\{\phi(t,\sigma',1,N-1)\}_{\substack{1\leq t\leq r+2\\
t\not=\rho_{\sigma'}(1)}}$ \ respectively.
\end{lemma}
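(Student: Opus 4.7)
The plan is to prove the non-trivial inclusion $P(A)\cap P(A')\subseteq P(A\cap A')$; the other direction is immediate from $A\cap A'\subseteq A,A'$. Pick $v\in P(A)\cap P(A')$ and write
$$v=\sum_{t=1}^{r+1}b_t\phi_{t,\mu}=\sum_{t=1}^{r+1}b'_t\phi_{t,\mu'},$$
the representations being unique by Lemma~\ref{phiind}, with $b_t,b'_t\ge 0$ summing to $1$ and with $b_t=0$ whenever $\phi_{t,\mu}\notin A$ (analogously for $b'_t$). The target is to show that every $\phi_{t,\mu}$ with $b_t>0$ actually equals some $\phi_{t',\mu'}$ with $b'_{t'}>0$, so that both vertices lie in $A\cap A'$ and hence $v\in P(A\cap A')$.

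First, since $P(A)\subseteq P_{1,\mu}\subseteq D_\mu$ and $P(A')\subseteq D_{\mu'}$, Definition~\ref{Dsigmaqn} yields the two chains $v^{(\sigma(1))}\ge\cdots\ge v^{(\sigma(r))}$ and $v^{(\sigma'(1))}\ge\cdots\ge v^{(\sigma'(r))}$. Lemma~\ref{Lema21previos}(ii) then kills the coefficients outside $\mathfrak{B}_{\sigma,\sigma'}$, i.e.\ $b_t=b'_t=0$ for $t\notin\mathfrak{B}_{\sigma,\sigma'}$. Next, for $t\in\mathfrak{B}_{\sigma,\sigma'}$ the defining property of this set gives $\sum_{j=1}^{t}e_{\sigma(j-1)}=\sum_{j=1}^{t}e_{\sigma'(j-1)}$, so $\phi_{t,\mu}$ and $\phi_{t,\mu'}$ share their first $r$ coordinates. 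The uniqueness identity \eqref{bari=}, applied to both expansions of $v$ projected to $\R^r$, then gives $b_t=b'_t$ for every $t\in\mathfrak{B}_{\sigma,\sigma'}$.

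The remaining equality is the $(r+1)$-coordinate balance
$$\sum_{t\in\mathfrak{B}_{\sigma,\sigma'}} b_t\,\bigl(\phi_{t,\mu}^{(r+1)}-\phi_{t,\mu'}^{(r+1)}\bigr)=0.$$
By Definition~\ref{phi} and Lemma~\ref{Lema21previos}(iv), each difference equals $\delta_t/N$ for some integer $\delta_t$ depending only on the $\prec$-positions of $t$ relative to $\rho_\sigma(q)$ and $\rho_{\sigma'}(q')$, together with $n-n'$. Lemma~\ref{Lema21previos}(iii) shows that $\prec_\sigma$ and $\prec_{\sigma'}$ agree on pairs in $\mathfrak{B}_{\sigma,\sigma'}$, so the subset of $t\in\mathfrak{B}_{\sigma,\sigma'}$ for which $\phi_{t,\mu}$ uses the $n+1$-shift is an \emph{upper set} of $(\mathfrak{B}_{\sigma,\sigma'},\prec_\sigma)$, and likewise on the primed side. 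A case analysis on whether $\rho_\sigma(q)$ and $\rho_{\sigma'}(q')$ lie in $\mathfrak{B}_{\sigma,\sigma'}$ then shows that all nonzero $\delta_t$ share a common sign along $\mathfrak{B}_{\sigma,\sigma'}$; combined with $b_t\ge 0$, this forces $\delta_t=0$ whenever $b_t>0$. Thus $\phi_{t,\mu}=\phi_{t,\mu'}$ sits in $A\cap A'$, as required.

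The replacement variants follow from \eqref{politoposciclicos}, which realizes $\{\phi(t,\sigma,1,N-1)\}_{t\ne\rho_\sigma(1)}$ as the translate by $e_{r+1}$ of $\{\phi(t,\sigma,r+1,0)\}_{t=1}^{r+1}$; subtracting this translation on the affected side converts the expansion into the second form of Lemma~\ref{Lema21previos}(ii) (exactly why both forms are recorded there), and the argument above carries through unchanged. The main obstacle is the sign-consistency step in paragraph three: ruling out mutual cancellation of the $\delta_t$ requires exploiting the compatibility of $\prec_\sigma$ and $\prec_{\sigma'}$ on $\mathfrak{B}_{\sigma,\sigma'}$ from Lemma~\ref{Lema21previos}(iii) together with the total-order property of Proposition~\ref{ordentotal}, and then tracking how the ``upper-set'' of shifted indices matches up across $\sigma$ and $\sigma'$.
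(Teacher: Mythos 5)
Your proof is essentially the same as the paper's: the key steps (reduce to barycentric coordinates, kill terms outside $\mathfrak{B}_{\sigma,\sigma'}$ via Lemma~\ref{Lema21previos}(ii), invoke \eqref{bari=} to equate coefficients, then establish a uniform sign for the remaining $(r+1)$-coordinate differences, and use \eqref{politoposciclicos} for the replacement variants) all match. The one cosmetic difference is in the sign-consistency step: the paper splits into $n<n'$ (trivially nonpositive differences) and $n=n'$ (a proof-by-contradiction using the $\prec_\sigma$-trichotomy and Lemma~\ref{Lema21previos}(iii)--(iv)), whereas you argue once through the nestedness of the two order ideals $\{t\in\mathfrak{B}_{\sigma,\sigma'}: t\prec_\sigma\rho_\sigma(q)\}$ and $\{t\in\mathfrak{B}_{\sigma,\sigma'}: t\prec_{\sigma'}\rho_{\sigma'}(q')\}$ of the common total order. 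That is a slightly cleaner formulation of the same idea. Two small caveats: those subsets are \emph{lower} sets, not upper sets (closed under $\prec_\sigma$-predecessors, by transitivity), and the decisive fact is that any two lower sets of a total order are nested, which holds regardless of whether $\rho_\sigma(q)$ or $\rho_{\sigma'}(q')$ lie in $\mathfrak{B}_{\sigma,\sigma'}$; the ``case analysis'' you mention is unnecessary once you have the nestedness.
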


\begin{proof}
First note that in any case $A \cap A' \subset P(A)\cap P(A')$, and so $P(A \cap A') \subset P(A)\cap P(A')$ since $P(A)\cap P(A')$ is a convex set. Thus we have only to prove the reverse inclusion. Suppose that $P(A)\cap P(A')\not=\varnothing$ (otherwise the inclusion is obvious). Take $v\in P(A)\cap P(A')$ and then expand it in its barycentric coordinates with respect to $A$ and $A'$: $v=\sum_{t=1}^{r+1}b_t\phi_{t,\mu}=\sum_{t=1}^{r+1}b'_t\phi_{t,\mu'}$,
where $b_t, b'_t\geq 0$; $\sum_{t=1}^{r+1}b_t=\sum_{t=1}^{r+1}b'_t=1$; $b_t=0$ if $\phi_{t,\mu}\not\in A$, and $b'_t=0$ if $\phi_{t,\mu'}\not\in
A'$. From \eqref{Dsigmaqn2}, we know that $P(A)\subset D_{\mu}$ and $P(A')\subset D_{\mu'}$, so $v^{(\sigma(1))}\geq\dots\geq v^{(\sigma(r))}$ and $v^{(\sigma'(1))}\geq\dots\geq v^{(\sigma'(r))}$. Hence, using \eqref{bari=} and Lemma~\ref{Lema21previos}~(ii) we have
\begin{equation}\label{27}
\sum_{t\in\mathfrak{B}_{\sigma,\sigma'}} b_t(\phi_{t,\mu}-\phi_{t,\mu'})=0.
\end{equation}
Without loss of generality we can assume $n\leq n'$. First suppose $n<n'$; we claim that $v\in P(A\cap A')$. Indeed, Definition~\ref{phi} and Lemma~\ref{Lema21previos}~(iv) imply that
$(\phi_{t,\mu}-\phi_{t,\mu'})e_{r+1}^T\leq 0$
for all $t\in\mathfrak{B}_{\sigma,\sigma'}$. Therefore, $b_t=b'_t>0$ implies that $\phi_{t,\mu}=\phi_{t,\mu'}$
by \eqref{27}, and then $v\in P(A\cap A')$. If $n=n'$, for the sake of contradiction suppose that there exist $t,t'\in\mathfrak{B}_{\sigma,\sigma'}$ such that
\begin{equation}\label{29}
\big(\phi_{t,\mu}-\phi(t,\sigma',q',n)\big)e_{r+1}^T< 0\qquad \mathrm{and} \qquad
\big(\phi_{t',\mu}-\phi(t',\sigma',q',n)\big)e_{r+1}^T> 0.
\end{equation}
Thus, $t\not= t'$. If $t\prec_\sigma t'$, we have that $t\prec_\sigma t'\prec_\sigma \rho_\sigma(q)$ by Definition~\ref{phi}, Lemma~\ref{Lema21previos}~(iv), and the second inequality of \eqref{29}. That is,
$\big(\phi_{t,\mu}-\phi(t,\sigma',q',n)\big)e_{r+1}^T\geq 0$, which contradicts the first inequality of \eqref{29}. If $t'\prec_\sigma t$ we have $t'\prec_{\sigma'} t\prec_{\sigma'}\rho_{\sigma'}(q')$ by Lemma~\ref{Lema21previos}~(iii), Definition~\ref{phi}, Lemma~\ref{Lema21previos}~(iv), and the first inequality of \eqref{29}. Therefore, $\big(\phi_{t',\mu}-\phi(t',\sigma',q',n)\big)e_{r+1}^T\leq 0$, which contradicts the second inequality of \eqref{29}. Thus $v\in P(A\cap A')$, as in the case $n<n'$.

To prove the last part of the lemma, let us verify the case
$A\subset\{\phi(t,\sigma,1,N-1)\}_{\substack{1\leq t\leq r+2\\ t\not=\rho_\sigma(1)}}$ and $A'\subset\{\phi_{t,\mu'}\}_{t=1}^{r+1}$. Let $v\in P(A)\cap
P(A')$. Expanding $v$ in barycentric coordinates,
$$v=(b_{\rho_{\sigma}(1)}/N)e_{r+1}+\sum_{t=1}^{r+1}b_t\phi(t,\sigma,1,N-1)=\sum_{t=1}^{r+1}b'_t\phi_{t,\mu'}.$$
Using \eqref{bari=} and Lemma~\ref{Lema21previos}~(ii), we conclude that $\sum_{t\in\mathfrak{B}_{\sigma,\sigma'}} b_t\delta_t=0$,
where
$$
\delta_t := \begin{cases} \phi(t,\sigma,1,N-1)-\phi_{t,\mu'} \  & \mathrm{if} \quad t\not=\rho_{\sigma}(1) ,\\
  \phi(r+2,\sigma,1,N-1)-\phi_{\rho_{\sigma}(1),\mu'} \ & \mathrm{if} \quad t=\rho_{\sigma}(1).
\end{cases}
$$
In both cases, Definition~\ref{phi} and Lemma~\ref{Lema21previos}~(iv) show that $\delta_te_{r+1}^T\geq 0$. Therefore,
$\delta_t=0$ whenever $b_t>0$, which implies that $v\in P(A\cap A')$.

The case $A\subset\{\phi_{t,\mu}\}_{t=1}^{r+1}$ and $A'\subset\{\phi(t,\sigma',1,N-1)\}_{\substack{1\leq t\leq r+2\\ t\not=\rho_{\sigma'}(1)}}$ follows as the previous one by symmetry. Finally, the case $A\subset\{\phi(t,\sigma,1,N-1)\}_{\substack{1\leq t\leq r+2\\ t\not=\rho_{\sigma}(1)}}$ and $A'\subset\{\phi(t,\sigma',1,N-1)\}_{\substack{1\leq
t\leq r+2\\ t\not=\rho_{\sigma'}(1)}}$ follows by using \eqref{P}, \eqref{politoposciclicos}, and the first part of the lemma.
\end{proof}

\begin{lemma}\label{Lema21b}
Let \ $\mu=(\sigma,q,n),\,\mu'=(\sigma',q',n')\in \widetilde{S}_r$. \ Then there exist \ $\widehat{q}\in\{1,\dots,r+1\}$ \ and \ $\widehat{n}\in\{0,\dots,N-1\}$ \ such that \ $y(\sigma,q,n)(v)$ \ equals either \ $y(\sigma',\widehat{q},\widehat{n})(v)$ \ or \ $1+y(\sigma',r+1,0)(v)$ \
for all \ $v\in [0,1]^r\times\R$ \ satisfying \ $v^{(\sigma(1))}\geq\dots\geq v^{(\sigma(r))}$ \ and \ $v^{(\sigma'(1))}\geq\dots\geq v^{(\sigma'(r))}$. \
Moreover, \ $y(\sigma,r+1,0)(v)=y(\sigma',r+1,0)(v)$ \ for all such \ $v$.
\end{lemma}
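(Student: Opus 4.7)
The strategy is to expand $v$ in barycentric coordinates with respect to the vertices of $P_{1,\mu}$, and then translate the resulting expression into the $\sigma'$-world using the three parts of Lemma~\ref{Lema21previos}: part (ii) says that only indices $t\in\mathfrak{B}_{\sigma,\sigma'}$ carry non-zero barycentric coordinates for $v$; part (iii) says that the orders $\prec_\sigma$ and $\prec_{\sigma'}$ agree on $\mathfrak{B}_{\sigma,\sigma'}$; and part (iv) says that $a(t,\sigma,\ell)$ does not depend on $\sigma$ when $t\in\mathfrak{B}_{\sigma,\sigma'}$.

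Concretely, I would write $v=\sum_{t=1}^{r+1} b_t\phi_{t,\mu}$ with $\sum b_t=1$. By Lemma~\ref{Lema21previos}~(ii), $b_t=0$ for $t\notin\mathfrak{B}_{\sigma,\sigma'}$, and by \eqref{bari=} the coefficients $b_t$ depend only on $v^{(1)},\dots,v^{(r)}$. Put $\epsilon_t:=1$ when $t\prec_\sigma \rho_\sigma(q)$ and $\epsilon_t:=0$ otherwise, so that Definition~\ref{phi} gives $\phi_{t,\mu}^{(r+1)}=a(t,\sigma,n+\epsilon_t)$. Applying Lemma~\ref{Lema21previos}~(iv) yields
\begin{equation*}
y(\sigma,q,n)(v)\;=\;\sum_{t\in\mathfrak{B}_{\sigma,\sigma'}} b_t\, a(t,\sigma',\,n+\epsilon_t).
\end{equation*}
The ``moreover'' claim follows immediately: for $q=r+1$, \eqref{fijoporrho} gives $\rho_\sigma(r+1)=r+1$, which by \eqref{rosigma} is the $\prec_\sigma$-minimum, so $\epsilon_t\equiv 0$ and the displayed formula reads $y(\sigma,r+1,0)(v)=y(\sigma',r+1,0)(v)$.

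For the main claim, set $S:=\{t\in\mathfrak{B}_{\sigma,\sigma'}\mid t\prec_\sigma\rho_\sigma(q)\}$; by Lemma~\ref{Lema21previos}~(iii), $S$ is an initial segment of $\mathfrak{B}_{\sigma,\sigma'}$ for the common order. The goal is to produce $\widehat{q},\widehat{n}$ (depending only on $\sigma,q,n,\sigma'$) so that the analogous $\epsilon''_t$ (equal to $1$ iff $t\prec_{\sigma'}\rho_{\sigma'}(\widehat{q})$) satisfies $\widehat{n}+\epsilon''_t=n+\epsilon_t$ for every $t\in\mathfrak{B}_{\sigma,\sigma'}$, which via the displayed formula would force $y(\sigma,q,n)(v)=y(\sigma',\widehat{q},\widehat{n})(v)$. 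I would split into three cases. If $S\subsetneq\mathfrak{B}_{\sigma,\sigma'}$, let $t^*$ be the $\prec_{\sigma'}$-minimum of $\mathfrak{B}_{\sigma,\sigma'}\setminus S$, choose $\widehat{q}$ with $\rho_{\sigma'}(\widehat{q})=t^*$ and $\widehat{n}=n$, and verify $\epsilon''_t=\epsilon_t$ using Lemma~\ref{Lema21previos}~(iii). If $S=\mathfrak{B}_{\sigma,\sigma'}$ and $n\le N-2$, take $\widehat{q}=r+1$ and $\widehat{n}=n+1$: then $\epsilon''_t\equiv 0$ by \eqref{fijoporrho}, matching $\epsilon_t\equiv 1$ at the shifted level. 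Finally, if $S=\mathfrak{B}_{\sigma,\sigma'}$ and $n=N-1$, use the identity $a(t,\sigma',N)=1+a(t,\sigma',0)$ (immediate from Definition~\ref{phi}) together with $\sum b_t=1$ to obtain $y(\sigma,q,N-1)(v)=1+y(\sigma',r+1,0)(v)$, realizing the second alternative in the statement.

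The main difficulty will be laying out the case split cleanly, in particular confirming that the ``wrap-around'' case $S=\mathfrak{B}_{\sigma,\sigma'}$ with $n=N-1$ is the unique situation where the first alternative fails and the $1+y(\sigma',r+1,0)(v)$ alternative is forced; everything else reduces to routine bookkeeping against Definition~\ref{phi} and Lemma~\ref{Lema21previos}.
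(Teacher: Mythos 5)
Your proposal is correct and takes essentially the same route as the paper's proof: your three cases are the paper's Cases 3, 1, and 2 in order, your choices of $t^*$ (the paper's $t_0$), $\widehat{q}$, and $\widehat{n}$ coincide with the paper's, and the same facts from Lemma~\ref{Lema21previos}, Definition~\ref{phi}, and \eqref{fijoporrho} carry the argument. (Minor nit: since a general $v\in[0,1]^r\times\R$ need not lie in $P_{1,\mu}$, it is the point $\big(v^{(1)},\dots,v^{(r)},y(\sigma,q,n)(v)\big)$ that should be expanded in barycentric coordinates with respect to $\{\phi_{t,\mu}\}_{t=1}^{r+1}$, not $v$ itself; your subsequent remark that the $b_t$ depend only on $v^{(1)},\dots,v^{(r)}$ shows you understand this, so it is only a notational slip.)
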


\begin{proof}
Consider the following three cases.

\noindent \emph{Case 1}. Suppose $t\prec_\sigma\rho_\sigma(q)$ for all $t\in \mathfrak{B}_{\sigma,\sigma'}$,
and $n\leq N-2$. Using Definition~\ref{phi}, Lemma~\ref{Lema21previos}~(iv) and \eqref{fijoporrho}, we have $\phi_{t,\mu}=\phi(t,\sigma',r+1,n+1)$ for all $t\in\mathfrak{B}_{\sigma,\sigma'}$
($\widehat{q}=r+1$ and $\widehat{n}=n+1$) since $t\not\prec_{\sigma'} r+1$ for all $t\in\{1,\dots,r+1\}$.

\noindent \emph{Case 2}. Suppose
$t\prec_\sigma\rho_\sigma(q)$ for all $t\in \mathfrak{B}_{\sigma,\sigma'}$, and $n=N-1$. Since $a(t,\sigma,N)=1+a(t,\sigma,0)$, we get that $\phi_{t,\mu}=e_{r+1}+\phi(t,\sigma',r+1,0)$ for all $t\in\mathfrak{B}_{\sigma,\sigma'}$ by proceeding as in the previous case.

\noindent \emph{Case 3}. If
there exists $t\in\mathfrak{B}_{\sigma,\sigma'}$ with $t\not\prec_\sigma \rho_\sigma(q)$, put $t_0:=\min_{t\in\mathfrak{B}_{\sigma,\sigma'}}\{t\not\prec_\sigma \rho_\sigma(q)\}$. That is,
$$t\not\prec_\sigma\rho_\sigma(q), \quad t\in\mathfrak{B}_{\sigma,\sigma'}, \quad t\not=t_0 \qquad \Longrightarrow \qquad t_0\prec_\sigma t$$
(note that $\mathfrak{B}_{\sigma,\sigma'}\not=\varnothing$ since $1\in\mathfrak{B}_{\sigma,\sigma'}$). Define $\widehat{q}:=(\rho_{\sigma'})^{-1}(t_0)$.
If $t\prec_\sigma\rho_\sigma(q)$ with $t\in\mathfrak{B}_{\sigma,\sigma'}$, then $t\prec_\sigma t_0=\rho_{\sigma'}(\widehat{q})$. If
$t\not\prec_\sigma\rho_\sigma(q)$ with $t\in \mathfrak{B}_{\sigma,\sigma'}$, then $t\not\prec_\sigma t_0$ (by the definition of $t_0$). Using Lemma~\ref{Lema21previos}~(iii) and Lemma~\ref{Lema21previos}~(iv), we have $\phi_{t,\mu}=\phi(t,\sigma',\widehat{q},n)$ for all $t\in\mathfrak{B}_{\sigma,\sigma'}$ ($\widehat{n}=n$). In particular,
$\phi(t,\sigma,r+1,0)=\phi(t,\sigma',r+1,0)$ for all $t\in\mathfrak{B}_{\sigma,\sigma'}$ ($t_0=\widehat{r+1}=r+1$).

On the other hand, if $v\in\R^{r+1}$ satisfies $v^{(\sigma(1))}\geq\dots\geq v^{(\sigma(r))}$ \ and \ $v^{(\sigma'(1))}\geq\dots\geq v^{(\sigma'(r))}$, we can write $\big(v^{(1)},\dots,v^{(r)},y_{\mu}(v)\big)$ as $\sum_{t\in\mathfrak{B}_{\sigma,\sigma'}}b_t\phi_{t,\mu}$
by \eqref{P}, Lemma \ref{lema8}, and Lemma~\ref{Lema21previos}~(ii). Therefore, the lemma follows from the three cases above.
\end{proof}

\begin{remark}\label{cadenas=}
Recall \eqref{ysigmaqn} and the chain of inequalities \eqref{25},
\begin{align*}
\nonumber
&y_{1}(\sigma,r+1,0)(x)\leq y_{1}(\sigma,r,0)(x)\leq\dots\leq y_{1}(\sigma,1,0)(x)\leq \\
\nonumber
&y_{1}(\sigma,r+1,1)(x)\leq y_{1}(\sigma,r,1)(x)\leq\dots\leq y_{1}(\sigma,1,1)(x)\leq \\
\nonumber
&\vdots\\
&y_{1}(\sigma,r+1,N-1)(x)\leq y_{1}(\sigma,r,N-1)(x)\leq\dots\leq y_{2}(\sigma,1,N-1)(x).
\end{align*}
From Lemma~\ref{Lema21b}, applied to $v\in [0,1]^r\times\R$ such that $v^{(\sigma(1))}\geq\dots\geq v^{(\sigma(r))}$ and $v^{(\sigma'(1))}\geq\dots\geq v^{(\sigma'(r))}$, we have that the links of both chains for $v$ (with $\sigma$ and $\sigma'$) are the same. This fact allows us to finish the proof of the simplicial decomposition of $D$.
\end{remark}

\begin{proposition}\label{intersecciondesimplices}
Let \ $\mu=(\sigma,q,n),\, \mu'=(\sigma',q',n')\in \widetilde{S}_r$. \ Then
\begin{equation}\label{32}
D_{\mu}\cap D_{\mu'}=P\big(\{\phi_{t,\mu}\}_{t=1}^{r+2}\cap \{\phi_{t,\mu'}\}_{t=1}^{r+2}\big),
\end{equation}
and so \ $D=\cup_{\mu\in \widetilde{S}_r}D_\mu$ \ is a simplicial decomposition of $D$.
\end{proposition}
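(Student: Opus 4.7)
\textit{Setup.} My plan is to establish the non-trivial inclusion $\subseteq$ in \eqref{32}; the reverse inclusion is immediate from the convexity of each simplex $D_\mu=P(\phi_{1,\mu},\dots,\phi_{r+2,\mu})$ granted by \eqref{Dsigmaqn2}. I would fix $v\in D_\mu\cap D_{\mu'}$ and read off from Definition~\ref{Dsigmaqn} that $v$ satisfies both monotonicity conditions $v^{(\sigma(1))}\ge\dots\ge v^{(\sigma(r))}$ and $v^{(\sigma'(1))}\ge\dots\ge v^{(\sigma'(r))}$. Remark~\ref{cadenas=} then forces the two chains~\eqref{25} for $\sigma$ and for $\sigma'$ to coincide term-by-term when evaluated at $v$, so the intervals $[y_{1,\mu}(v),y_{2,\mu}(v)]$ and $[y_{1,\mu'}(v),y_{2,\mu'}(v)]$ are both consecutive gaps in one and the same ordered list of reals, both containing $v^{(r+1)}$; hence they either coincide or share exactly the endpoint $v^{(r+1)}$.

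\textit{Case analysis.} In the shared-endpoint case, $v$ lies on a horizontal face of each simplex ($P_{1,\mu}$ or $P_{2,\mu}$, and dually for $\mu'$); after identifying any $P_{2,\nu}$ with the $P_{1,\nu''}$ of the adjacent layer, or with the vertical shift of $P_1(\sigma,r+1,0)$ at the cyclic seam (cf.~\eqref{politoposciclicos}), the general form of Lemma~\ref{Lema21a} applies directly to yield $v\in P(A\cap A')$ for the appropriate vertex sets $A,A'$. In the coinciding-interval case I would set $v_i:=(v^{(1)},\dots,v^{(r)},y_{i,\mu}(v))$ for $i=1,2$, so that $v=\lambda v_1+(1-\lambda)v_2$ for some $\lambda\in[0,1]$; since the affine functions $y_{i,\mu}$ and $y_{i,\mu'}$ depend only on the first $r$ coordinates and agree at $v$, we get $v_i\in P_{i,\mu}\cap P_{i,\mu'}$. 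Applying Lemma~\ref{Lema21a} separately to $v_1$ and to $v_2$ expresses each as a convex combination of vertices common to the corresponding horizontal faces, and the identity $\{\phi_{t,\mu}\}_{t=1}^{r+2}=\{\phi_{t,\mu}\}_{t=1}^{r+1}\cup\{\phi_{t,\mu}\}_{t\ne\rho_\sigma(q)}$ lets these two expansions be assembled into a single convex expansion of $v$ supported on $\{\phi_{t,\mu}\}_{t=1}^{r+2}\cap\{\phi_{t,\mu'}\}_{t=1}^{r+2}$, giving \eqref{32}.

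\textit{Simplicial decomposition and main obstacle.} Granted \eqref{32}, the final assertion is formal: \eqref{D} furnishes the covering, Lemma~\ref{phiind} combined with \eqref{Dsigmaqn2} identifies each $D_\mu$ as a genuine $(r+1)$-simplex, and \eqref{32} states that pairwise intersections are common faces. The hardest step, as I see it, is the interaction between the ``vertical'' $e_{r+1}$-direction and the permutation combinatorics: common vertices of $D_\mu$ and $D_{\mu'}$ can arise only for indices $t\in\mathfrak{B}_{\sigma,\sigma'}$ by Lemma~\ref{Lema21previos}, and only when the layer parameters $(q,n),(q',n')$ are compatible in the sense of Lemma~\ref{Lema21b}; in particular the distinguished vertex $\phi_{r+2,\mu}$ can fail to lie in $D_{\mu'}$ even when the two simplices share a codimension-one face. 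Ensuring that the interval-analysis in the common chain genuinely matches up with the vertex-level statement of Lemma~\ref{Lema21a} (including the cyclic seam handled by \eqref{politoposciclicos}) is where the argument really lives.
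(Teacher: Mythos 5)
Your proof is correct and follows essentially the same route as the paper's: you reduce the inclusion $\subseteq$ to the behaviour of $v^{(r+1)}$ relative to the common chain of $y$-values (Remark~\ref{cadenas=}), handle the horizontal-face case via the general form of Lemma~\ref{Lema21a}, and in the remaining case write $v$ as a convex combination of the two horizontal projections $v_1,v_2$ and apply Lemma~\ref{Lema21a} to each. The paper organizes the case split by the position of $v^{(r+1)}$ within $[y_{1,\mu}(v),y_{2,\mu}(v)]$ (four sub-cases) rather than by whether the two intervals coincide or share only an endpoint, but this is merely a cosmetic regrouping of the same dichotomy and the supporting lemmas used are identical.
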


\begin{proof}
We have only to verify that the left-hand side of \eqref{32} is contained in the right-hand side, as the other inclusion is obvious by \eqref{Dsigmaqn2}. Let $v\in D_{\mu}\cap D_{\mu'}$. From Definition~\ref{Dsigmaqn}, we have the next four possibilities:
\begin{align*}
&y_{1,\mu}(v)=v^{(r+1)}<y_{2,\mu}(v), &&y_{1,\mu}(v)<v^{(r+1)}=y_{2,\mu}(v),\\
&y_{1,\mu}(v)=v^{(r+1)}=y_{2,\mu}(v), &&y_{1,\mu}(v)<v^{(r+1)}<y_{2,\mu}(v).
\end{align*}
In the first three cases, Lemma~\ref{lema8} and Remark~\ref{cadenas=} imply that $v$ lies in the intersection of two polytopes, as in Lemma~\ref{Lema21a}, and so we have the desired inclusion.

Now suppose $y_{1,\mu}(v)<v^{(r+1)}<y_{2,\mu}(v)$. In this case, $v$ lies on the straight line passing through the points $\big(v^{(1)},\dots,v^{(r)},y_{1,\mu}(v)\big)$ and $\big(v^{(1)},\dots,v^{(r)},y_{2,\mu}(v)\big)$. Then using Remark~\ref{cadenas=}, Lemma~\ref{lema8}, and Lemma~\ref{Lema21a}, we have that these two points lie in $P\big(\{\phi_{t,\mu}\}_{t=1}^{r+2}\cap \{\phi_{t,\mu'}\}_{t=1}^{r+2}\big)$. Therefore, we have the desired inclusion by convexity.
\end{proof}

\subsection{The piecewise affine map $f$}

Now we construct the piecewise affine map $f$ mentioned at the beginning of this section. In Proposition~\ref{Escher}, we shall define $f$ as a function on $D$ \big(see \eqref{D}\big) that descends to the quotient $\widehat{T}$ described in \eqref{torocanonico}.

Consider the function
\begin{align}
\nonumber
&\ell:\big(\C\times\R^r\big)\smallsetminus\{x^{(r+1)}=0\}\longrightarrow\C\times\R^{r-1},\\
\label{ell}
&\ell(x):=\left(\frac{x^{(1)}}{x^{(r+1)}},\frac{x^{(2)}}{x^{(r+1)}},\dots,\frac{x^{(r)}}{x^{(r+1)}}\right)\in\C\times\R^{r-1},
\end{align}
valid for any $x=(x^{(1)},\dots,x^{(r+1)})\in\C\times\R^r$ with non-vanishing last coordinate $x^{(r+1)}$. We define $\widetilde{V}:=\ell(V)=\langle\widetilde{\varepsilon_1},\dots,\widetilde{\varepsilon_r}\rangle$, where
$V:=\langle\varepsilon_1,\dots,\varepsilon_r\rangle$ and $\widetilde{\varepsilon_j}:=\ell(\varepsilon_j)$ $(1\leq j\leq r)$. Here the $\varepsilon_j$ are totally positive independent units of $k$, as in Theorem~\ref{Main}. It is clear that $\widetilde{V}$ acts on $\ell\big(\mi\big)=\C^*\times\R_+^{r-1}$ by component-wise multiplication.

Let $\mu=(\sigma,q,n)\in \widetilde{S}_r$. For each $t\in\Z$, choose $\alpha_t=\alpha(t)\in k\cap\left(\C^*\times\R^r_+\right)$ as in the fourth step of the 7SA. From \eqref{aspas}, we can readily verify that
\begin{equation}\label{alfaaspas}
\alpha_t^{(1)} \, \in \ \stackrel{\circ}{\A_t} \qquad (t\in\Z).
\end{equation}
Since the set $\{\phi_{t,\mu}\}_{t=1}^{r+2}$ is affinely independent by Lemma~\ref{phiind}, we can define $A_{\mu}=A(\sigma,q,n):\R^{r+1}\to \C\times\R^{r-1}$ as
the unique affine map such that
\begin{equation}\label{afin}
A_{\mu}(\phi_{t,\mu}):=\varphi_{t,\mu} \qquad\qquad (1\leq t\leq r+2),
\end{equation}
where $\varphi_{t,\mu}:=\ell(f_{t,\mu})$ and $f_{t,\mu}$ is defined by
\begin{align}\label{ftmu}
&f_{t,\mu}=f(t,\sigma,q,n):=\begin{cases}
f_{t,\sigma}\cdot\alpha(Na(t,\sigma,n)) & \mathrm{if} \ t\nprec_\sigma\rho_\sigma(q), \\
\nonumber
f_{t,\sigma}\cdot\alpha(Na(t,\sigma,n+1)) & \mathrm{if} \ t\prec_\sigma\rho_\sigma(q),
\end{cases} \qquad (1\leq t\leq r+1)\\
&f_{r+2,\mu}=f(r+2,\sigma,q,n):=f_{\rho_\sigma(q),\sigma}\cdot\alpha(Na(\rho_\sigma(q),\sigma,n+1)).
\end{align}
Except for minors changes in notation, this definition of $f_{t,\mu}$ is the one given in \eqref{ftsigmaqj} and \eqref{ftsigmaqj2}. In fact, it is easy to verify that
\begin{equation}\label{aym}
a(t,\sigma,j+1)-a(t,\sigma,j)=1/N,  \qquad Na(t,\sigma,j)\equiv \m(\xi_\sigma(t))+j \qquad (j\in\Z)
\end{equation}

At first sight, we do not know if the image of the map $A_{\mu}$ restricted to $D_{\mu}$ is contained in $\mip$ or not. This issue will be important when we define the function $f$ by using the $A_{\mu}$. The next lemma answers this question, and will prove important in working with homotopies later. For its proof we shall use the following property of affine maps. Let $W$ and $W'$ be two real vector spaces. If $w\in W$ has barycentric coordinates $b_i$ $(1\le i\le \ell)$ with respect to $w_1,\dots,w_\ell$, and $A:W\to W'$ is an affine map with $A(w_i)=p_i$ $(1\le i\le \ell)$, then the same $b_i$ are also barycentric coordinates for $A(w)$ with respect to $p_1,\dots,p_\ell$. Therefore, using definition \eqref{afin},
\begin{equation}\label{prop.afin}
v=\sum_{t=1}^{r+2}b_t\phi_{t,\mu}, \quad b_t\in\R, \quad \sum_{t=1}^{r+2}b_t=1 \qquad \Longrightarrow \qquad A_\mu(v)=\sum_{t=1}^{r+2}b_t\varphi_{t,\mu}.
\end{equation}

\begin{lemma}\label{afindentrosemiplano}
Let \ $\mu=(\sigma,q,n)\in \widetilde{S}_r$. \ Then, for any \ $t\in\{1,\dots,r+2\}$, \ we have
$$A_{\mu}(D_\mu) \ \subset \ (f_{\rho_\sigma(q),\sigma}^{(1)}\cdot\s_{Na(\rho_\sigma(q),\sigma,n)})\times \R_+^{r-1} \ \subset \ \mip.$$
\end{lemma}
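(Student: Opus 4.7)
The plan is to reduce the inclusion to a check on the vertices $\varphi_{t,\mu}:=A_\mu(\phi_{t,\mu})=\ell(f_{t,\mu})$ for $1\le t\le r+2$, and then dispatch each case using the machinery just developed. Since $D_\mu = P(\phi_{1,\mu},\dots,\phi_{r+2,\mu})$ by \eqref{Dsigmaqn2}, the property \eqref{prop.afin} of affine maps gives $A_\mu(D_\mu)=P(\varphi_{1,\mu},\dots,\varphi_{r+2,\mu})$. The target set $(f_{\rho_\sigma(q),\sigma}^{(1)}\cdot\s_{Na(\rho_\sigma(q),\sigma,n)})\times\R_+^{r-1}$ is convex because each sector $\s_k$ is convex for $N\ge 3$ (as noted after \eqref{semiplanos}), multiplication by the nonzero constant $f_{\rho_\sigma(q),\sigma}^{(1)}\in\C^*$ preserves convexity, and $\R_+^{r-1}$ is convex; hence it suffices to place each $\varphi_{t,\mu}$ inside it.

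Since $f_{t,\mu}\in k_+\subset\C^*\times\R_+^r$, all real coordinates $f_{t,\mu}^{(j)}$ $(2\le j\le r+1)$ are positive, so $\varphi_{t,\mu}^{(j)}=f_{t,\mu}^{(j)}/f_{t,\mu}^{(r+1)}>0$ for $j=2,\dots,r$, which handles the $\R_+^{r-1}$ factor. Moreover $f_{t,\mu}^{(r+1)}>0$ makes $\varphi_{t,\mu}^{(1)}$ a positive real multiple of $f_{t,\mu}^{(1)}$, and every sector $\s_k$ is stable under positive real scaling, so it is enough to show $f_{t,\mu}^{(1)}\in f_{q_0,\sigma}^{(1)}\cdot\s_{Na(q_0,\sigma,n)}$, where $q_0:=\rho_\sigma(q)$. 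Using the identity $f_{t,\sigma}^{(1)}/f_{q_0,\sigma}^{(1)}=\tau_1(f_{q_0,\sigma}^{-1}f_{t,\sigma})=\xi_\sigma(q_0,t)$, the congruence $Na(s,\sigma,\ell)\equiv \m(\xi_\sigma(s))+\ell\pmod N$ from \eqref{aym}, and the inclusion $\alpha_m^{(1)}\in\stackrel{\circ}{\A}_m$ from \eqref{alfaaspas}, the claim for $1\le t\le r+1$ reduces to
\[
\xi_\sigma(q_0,t)\cdot\A_{\m(\xi_\sigma(t))+j}\ \subset\ \s_{\m(\xi_\sigma(q_0))+n},
\]
with $j=n$ if $t\not\prec_\sigma q_0$ and $j=n+1$ if $t\prec_\sigma q_0$.

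The verification then splits into three cases. If $t\not\prec_\sigma q_0$, then by trichotomy (Proposition~\ref{ordentotal}) either $t=q_0$, where $\xi_\sigma(q_0,t)=1$ and the inclusion becomes $\A_{\m(\xi_\sigma(q_0))+n}\subset\s_{\m(\xi_\sigma(q_0))+n}$, an instance of \eqref{aspasemiplano}; or $q_0\prec_\sigma t$, in which case condition \eqref{orden1} for $(q_0,t)$ gives $\m(\xi_\sigma(q_0,t))\equiv\m(\xi_\sigma(t))-\m(\xi_\sigma(q_0))\pmod N$, and Lemma~\ref{aspadentrosemiplano}~(i) delivers the containment. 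If $t\prec_\sigma q_0$, Corollary~\ref{Cor0.5} yields $\xi_\sigma(q_0,t)\cdot\stackrel{\circ}{\A}_{1+\m(\xi_\sigma(t))}\subset\s_{\m(\xi_\sigma(q_0))}$; multiplying by $\e(2\pi i n/N)$ and using $\e(2\pi i n/N)\A_k=\A_{k+n}$ together with $\e(2\pi i n/N)\s_k=\s_{k+n}$ produces exactly the inclusion for $j=n+1$. For the remaining vertex $t=r+2$, definition \eqref{ftmu} gives $f_{r+2,\mu}^{(1)}/f_{q_0,\sigma}^{(1)}=\alpha_{\m(\xi_\sigma(q_0))+n+1}^{(1)}\in\stackrel{\circ}{\A}_{\m(\xi_\sigma(q_0))+n+1}\subset\s_{\m(\xi_\sigma(q_0))+n}$, again by \eqref{aspasemiplano}. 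The only genuine obstacle is the $t\prec_\sigma q_0$ case, and this is precisely what Corollary~\ref{Cor0.5} was isolated to handle.
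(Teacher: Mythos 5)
Your proof is correct and follows essentially the same route as the paper's: reduce via \eqref{Dsigmaqn2} and \eqref{prop.afin} to a vertex check, pass through $\ell$ using \eqref{prop.ell}, and split into the same three cases, invoking Corollary~\ref{Cor0.5} when $t\prec_\sigma\rho_\sigma(q)$, Lemma~\ref{aspadentrosemiplano}~(i) together with condition \eqref{orden1} when $\rho_\sigma(q)\prec_\sigma t$, and \eqref{aspasemiplano} for $t=\rho_\sigma(q)$ and $t=r+2$. The only slip is cosmetic: you state the reduced inclusion with the closed arm $\A_{\m(\xi_\sigma(t))+j}$, but Corollary~\ref{Cor0.5} only yields the open arm $\stackrel{\circ}{\A}$; since $\alpha_m^{(1)}\in\stackrel{\circ}{\A}_m$ by \eqref{alfaaspas}, the open-arm version is all that is needed, so the argument is unaffected.
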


\begin{proof}
We note two properties of the map $\ell$ defined in \eqref{ell}. If $x\in\C^*\times\R_+^{r}$,
then
\begin{equation}\label{prop.ell}
\arg\!\big(\ell(x)^{(1)}\big)=\arg(x^{(1)}) \qquad \mathrm{and} \qquad \ell(x)^{(j)}\in\R_+ \qquad (2\leq j\leq r).
\end{equation}
In particular, these properties are satisfied by $x=f_{t,\mu}$, for any $t\in\Z$.

To prove the lemma, first we shall study three cases for $A_\mu(\phi_{t,\mu})$ $(1\le t\le r+2)$.

\noindent \emph{Case 1}. Suppose $1\leq t\leq r+1$ and $t\prec_\sigma\rho_\sigma(q)$. From \eqref{afin}, we have
\begin{equation}\label{50a}
A_{\mu}(\phi_{t,\mu}):=\ell(f_{t,\sigma}\cdot\alpha_{Na(t,\sigma,n+1)}).
\end{equation}
Since $t\prec_\sigma\rho_\sigma(q)$, Corollary~\ref{Cor0.5} implies that $\xi_\sigma(\rho_\sigma(q),t)\cdot\stackrel{\circ}{\A}_{1+\m(\xi_\sigma(t))}\subset\s_{\m(\xi_\sigma(\rho_\sigma(q)))}$.
Multiplying this inclusion by $\tau_1(f_{\rho_\sigma(q),\sigma})\cdot\e(2\pi in/N)$, and using \eqref{aym}, we get
$$f_{t,\sigma}^{(1)}\cdot\stackrel{\circ}{\A}_{Na(t,\sigma,n+1)}\subset f_{\rho_\sigma(q),\sigma}^{(1)}\cdot\s_{Na(\rho_\sigma(q),\sigma,n)}.$$
Then, using \eqref{50a}, \eqref{alfaaspas} and \eqref{prop.ell}, the last inclusion implies that
$A_{\mu}(\phi_{t,\mu})$ lies in $(f_{\rho_\sigma(q),\sigma}^{(1)}\cdot\s_{Na(\rho_\sigma(q),\sigma,n)})\times \R_+^{r-1}$.

\noindent \emph{Case 2}. Suppose that $1\leq t\leq r+1$ and $t\not\prec_\sigma\rho_\sigma(q)$. From \eqref{afin},
\begin{equation}\label{50b}
A_{\mu}(\phi_{t,\mu}):=\ell(f_{t,\sigma}\cdot\alpha_{Na(t,\sigma,n)}).
\end{equation}
Since $t\not\prec_\sigma\rho_\sigma(q)$, we have that \eqref{1.2} \big(respectively \eqref{aspasemiplano}\big) implies
$$\xi_\sigma(\rho_\sigma(q),t)\cdot\A_{\m(\xi_\sigma(t))}\subset \s_{\m(\xi_\sigma(\rho_\sigma(q)))}$$
whenever $\rho_\sigma(q)\prec_\sigma t$ \big(respectively $t=\rho_\sigma(q)$\big). Multiplying the last inclusion by
$\tau_1(f_{\rho_\sigma(q),\sigma})\cdot\e(2\pi in/N)$, and using \eqref{aym}, we get
$$f_{t,\sigma}^{(1)}\cdot\A_{Na(t,\sigma,n)}\subset f_{\rho_\sigma(q),\sigma}^{(1)}\cdot\s_{Na(\rho_\sigma(q),\sigma,n)}.$$
Then, using \eqref{50b}, \eqref{alfaaspas} and \eqref{prop.ell}, from the last inclusion we have that
$A_{\mu}(\phi_{t,\mu})$ lies in $\big(f_{\rho_\sigma(q),\sigma}^{(1)}\cdot\s_{Na(\rho_\sigma(q),\sigma,n)}\big)\times \R^{r-1}$.

\noindent \emph{Case 3}. Finally, if $t=r+2$, we have
\begin{equation}\label{50c}
A_{\mu}(\phi_{t,\mu}):=\ell(f_{\rho_\sigma(q),\sigma}\cdot\alpha_{Na(\rho_\sigma(q),\sigma,n+1)})
\end{equation}
\big(see \eqref{afin} and \eqref{ftmu}\big). From \eqref{aspasemiplano}, note that
$\stackrel{\circ}{\A}_{1+n+\m(\xi_\sigma(\rho_\sigma(q)))}\subset \s_{{n+\m(\xi_\sigma(\rho_\sigma(q)))}}$.
Multiplying this inclusion by $f_{\rho_\sigma(q),\sigma}^{(1)}$, and using \eqref{aym}, we have
$$f_{\rho_\sigma(q),\sigma}^{(1)}\cdot\stackrel{\circ}{\A}_{Na(\rho_\sigma(q),\sigma,n+1)}\subset
f_{\rho_\sigma(q),\sigma}^{(1)}\cdot\s_{Na(\rho_\sigma(q),\sigma,n)}.$$
Then, from \eqref{50c} and \eqref{prop.ell}, we get that
$A_{\mu}(\phi_{t,\mu})$ lies in $\big(f_{\rho_\sigma(q),\sigma}^{(1)}\cdot\s_{Na(\rho_\sigma(q),\sigma,n)}\big)\times \R^{r-1}$.

The lemma follows from \eqref{Dsigmaqn2}, \eqref{prop.afin}, and the three previous cases by the convexity of
$(f_{\rho_\sigma(q),\sigma}^{(1)}\cdot\s_{Na(\rho_\sigma(q),\sigma,n)})\times \R_+^{r-1},$
a product of convex sets.
\end{proof}

Given $\sigma\in S_r$, define $\widetilde{\sigma}\in S_r$ by $\widetilde{\sigma}(1):=\sigma(r)$,
and $\widetilde{\sigma}(j):=\sigma(j-1)$ for each $j\in\{2,\dots,r\}$. Recall the set of integers
$$B_\sigma:=\left\{1\leq t\leq r+1 \ \left| \ \m\big(\varepsilon_{\sigma(r)}^{(1)}\cdot\xi_\sigma(t)\big) \equiv
\m\big(\varepsilon_{\sigma(r)}^{(1)}\big)+\m(\xi_\sigma(t)) \right.\right\}$$
defined in Lemma~\ref{lema42}. From \eqref{Bsigmac}, for each $t\in\{1,\dots,r\}$, there exists $\kappa_{t,\sigma}'\in\Z$ such that
\begin{equation}\label{ktsigma'}
\m\big(\xi_\sigma(t)\varepsilon_{\sigma(r)}^{(1)}\big)=\begin{cases} \m(\xi_\sigma(t))+\m\big(\varepsilon_{\sigma(r)}^{(1)}\big)+\kappa_{t,\sigma}'N \
&\mathrm{if} \ t\in B_\sigma\\
\m(\xi_\sigma(t))+\m\big(\varepsilon_{\sigma(r)}^{(1)}\big)+\kappa_{t,\sigma}'N-1 \ &\mathrm{if} \ t\in B_\sigma^c\end{cases}.
\end{equation}
On the other hand, \eqref{dtsigma} and the definition of $\widetilde{\sigma}$ imply that
$$
2\pi d_{t+1,\widetilde{\sigma}}-2\pi d_{t,\sigma}+\arg\!\big(\varepsilon_{\sigma(r)}^{(1)}\big)
+\frac{2\pi}{N}\m(\xi_{\widetilde{\sigma}}(t+1))-\frac{2\pi}{N}\m(\xi_\sigma(t)) \ \in \ \left(-\frac{2\pi}{N} \ , \ \frac{2\pi}{N}\right)
$$
for any $t\in\{1,\dots,r\}$. Multiplying the last expression by $-N/2\pi$, we conclude that
$$
-Nd_{t+1,\widetilde{\sigma}}+Nd_{t,\sigma}-\frac{N}{2\pi}\arg\!\big(\varepsilon_{\sigma(r)}^{(1)}\big)-\m(\xi_{\widetilde{\sigma}}(t+1))
+\m(\xi_\sigma(t)) \ \in \ \left(-1 \ , \ 1\right).
$$
Using \eqref{m}, the ceiling function $\lceil \ \rceil$, and dividing by $N$, we get
$$d_{t+1,\widetilde{\sigma}}-d_{t,\sigma}-\frac{1}{N}\m\big(\varepsilon_{\sigma(r)}^{(1)}\big)+\frac{1}{N}\m(\xi_{\widetilde{\sigma}}(t+1))
-\frac{1}{N}\m(\xi_\sigma(t)) \ \in \ \left\{0,-\frac{1}{N}\right\}.$$
Since $N\geq 3$, Lemma~\ref{lema42}~(i) and \eqref{ktsigma'} imply that
\begin{equation}\label{revelacion}
d_{t+1,\widetilde{\sigma}}-d_{t,\sigma}+\kappa_{t,\sigma}'=0 \qquad\qquad (1\leq t\leq r, \ \sigma\in S_r).
\end{equation}

\begin{lemma}\label{antesdeEscher}
Let \ $\mu=(\sigma,q,n)\in \widetilde{S}_r$ \ be such that \ $\rho_\sigma(q)\not=r+1$, \ and define \ $\widetilde{\sigma}\in S_r$ \ by \ $\widetilde{\sigma}(1):=\sigma(r)$, \ and \
$\widetilde{\sigma}(j):=\sigma(j-1)$ \ for each \ $j\in\{2,\dots,r\}$. \ Then there exist \ $\kappa_{\mu}\in \Z$, \ $\widetilde{q}\in\{1,\dots,r+1\}$, \ and \
$\widetilde{n}\in\{0,\dots,N-1\}$, \ such that we have
\begin{align}\label{caso1escher}
&\phi_{t,\mu}+e_{\sigma(r)}+\kappa_{\mu}e_{r+1}=\phi_{t+1,\widetilde{\mu}}\in D_{\widetilde{\mu}} \qquad\qquad (1\le t\le r),\\
\label{caso2escher}
&\phi_{r+2,\mu}+e_{\sigma(r)}+\kappa_{\mu}e_{r+1}=\phi_{r+2,\widetilde{\mu}}\in D_{\widetilde{\mu}},
\end{align}
with $\widetilde{\mu}:=(\widetilde{\sigma},\widetilde{q},\widetilde{n})\in \widetilde{S}_r$.
\end{lemma}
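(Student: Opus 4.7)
\medskip
\noindent\textbf{Proof proposal.} My plan is to identify $\widetilde{q}\in\{1,\dots,r+1\}$ first, then pin down $\widetilde{n}$ and $\kappa_\mu$ from the comparison of $e_{r+1}$-coefficients. The natural guess comes from the $t=r+2$ equation: since $\sum_{j=1}^{\ell}e_{\widetilde{\sigma}(j-1)}=e_{\sigma(r)}+\sum_{j=1}^{\ell-1}e_{\sigma(j-1)}$ for any $\ell\ge 1$ (by the definition of $\widetilde{\sigma}$ and $e_{\sigma(0)}=0$), matching the non-$e_{r+1}$ parts of $\phi_{r+2,\widetilde{\mu}}$ and $\phi_{r+2,\mu}+e_{\sigma(r)}$ forces $\rho_{\widetilde{\sigma}}(\widetilde{q})=\rho_\sigma(q)+1$. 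Since $\rho_\sigma(q)\le r$ by hypothesis, this prescribes $\widetilde{q}$ uniquely. The same identity instantly handles the non-$e_{r+1}$ parts of \eqref{caso1escher} for all $1\le t\le r$, so the whole lemma reduces to checking the single scalar identity
$$a\big(t,\sigma,n+\epsilon_t\big)+\kappa_\mu=a\big(t+1,\widetilde{\sigma},\widetilde{n}+\epsilon'_{t+1}\big)\qquad(1\le t\le r),$$
together with the analogous one for $t=r+2$, where $\epsilon_t\in\{0,1\}$ is determined by whether $t\prec_\sigma\rho_\sigma(q)$ and $\epsilon'_{t+1}$ by whether $t+1\prec_{\widetilde{\sigma}}\rho_\sigma(q)+1$.

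\medskip
Substituting Definition~\ref{phi}, using Lemma~\ref{lema42}(i) to rewrite $\m(\xi_{\widetilde{\sigma}}(t+1))$ through \eqref{ktsigma'}, and simplifying via \eqref{revelacion}, the required identity collapses to
$$\epsilon'_{t+1}-\epsilon_t-\delta_t\ =\ n-\widetilde{n}-\m\!\big(\varepsilon_{\sigma(r)}^{(1)}\big)+N\kappa_\mu\qquad (1\le t\le r),$$
where $\delta_t=0$ if $t\in B_\sigma$ and $\delta_t=1$ if $t\in B_\sigma^c$. The right-hand side is a constant $C$ independent of $t$, so I must verify that $\epsilon'_{t+1}-\epsilon_t-\delta_t$ is the same for all $t\in\{1,\dots,r\}$. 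Once this constant $C$ is identified, I choose $\widetilde{n}\in\{0,\dots,N-1\}$ and $\kappa_\mu\in\Z$ as the unique pair satisfying $n-\widetilde{n}-\m(\varepsilon_{\sigma(r)}^{(1)})+N\kappa_\mu=C$, by reducing modulo $N$.

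\medskip
The key step, which I expect to be the main obstacle, is the case analysis showing that $\epsilon'_{t+1}-\epsilon_t-\delta_t$ is indeed constant and matches the contribution of the $t=r+2$ equation. I split according to whether $\rho_\sigma(q)\in B_\sigma$ or $\rho_\sigma(q)\in B_\sigma^c$. If $\rho_\sigma(q)\in B_\sigma$, then for $t\in B_\sigma$ Lemma~\ref{lema42}(ii) gives $\epsilon'_{t+1}=\epsilon_t$, while for $t\in B_\sigma^c$ Lemma~\ref{lema42}(iii) forces $\epsilon_t=0$ and $\epsilon'_{t+1}=1$, so in both subcases $\epsilon'_{t+1}-\epsilon_t-\delta_t=0$; thus $C=0$. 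If instead $\rho_\sigma(q)\in B_\sigma^c$, Lemma~\ref{lema42}(iii) gives $\epsilon_t=1$ and $\epsilon'_{t+1}=0$ when $t\in B_\sigma$, while Lemma~\ref{lema42}(ii) gives $\epsilon'_{t+1}=\epsilon_t$ when $t\in B_\sigma^c$, so in both subcases $\epsilon'_{t+1}-\epsilon_t-\delta_t=-1$; thus $C=-1$. A parallel computation for $t=r+2$ produces the scalar identity $n-\widetilde{n}-\m(\varepsilon_{\sigma(r)}^{(1)})+N\kappa_\mu=-\delta_{\rho_\sigma(q)}$, which exactly reproduces $C=0$ in the first case and $C=-1$ in the second; hence \eqref{caso2escher} is consistent with \eqref{caso1escher}. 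Finally, the membership assertions $\phi_{t+1,\widetilde{\mu}}\in D_{\widetilde{\mu}}$ and $\phi_{r+2,\widetilde{\mu}}\in D_{\widetilde{\mu}}$ are automatic since these are vertices of the simplex $D_{\widetilde{\mu}}$ by \eqref{Dsigmaqn2}.
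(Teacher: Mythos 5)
Your proof is correct and takes essentially the same approach as the paper: both proofs rest on the same choice $\rho_{\widetilde\sigma}(\widetilde q)=\rho_\sigma(q)+1$, the same auxiliary results (Lemma~\ref{lema42}(i)--(iii), \eqref{ktsigma'}, \eqref{revelacion}), and the same bifurcation on $\rho_\sigma(q)\in B_\sigma$ versus $B_\sigma^c$ with subcases on the $B_\sigma$-membership of $t$. You merely repackage the argument more compactly by first reducing everything to the single scalar identity $\epsilon'_{t+1}-\epsilon_t-\delta_t=n-\widetilde n-\m(\varepsilon_{\sigma(r)}^{(1)})+N\kappa_\mu$ and then verifying constancy of the left side and compatibility with the $t=r+2$ equation, whereas the paper writes out the height shifts \eqref{58} and \eqref{alturacaso2} explicitly in each of the two main cases.
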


\begin{proof}
Let $\widetilde{q}:=(\rho_{\widetilde{\sigma}})^{-1}\big(1+\rho_\sigma(q)\big)$. We will divide the proof into two cases according to whether $\rho_\sigma(q)\in B_\sigma$ or not.

\noindent\textbf{Case 1.} Suppose $\rho_\sigma(q)\in B_\sigma$. Choose $\kappa_{\mu}\in\Z$ so that
$n-\m\big(\varepsilon_{\sigma(r)}^{(1)}\big)+\kappa_{\mu} N$ lies in $\{0,\dots,N-1\}$.
Let $\widetilde{n}:=n-\m\big(\varepsilon_{\sigma(r)}^{(1)}\big)+\kappa_{\mu} N$,
and $\widetilde{\mu}:=(\widetilde{\sigma},\widetilde{q},\widetilde{n})\in \widetilde{S}_r$.
From Definition~\ref{phi}, \eqref{ktsigma'} and \eqref{revelacion}, we have for each $t\in\{1,\dots,r\}$ that
\begin{equation}\label{58}
a(t+1,\widetilde{\sigma},\widetilde{n})-a(t,\sigma,n)
=\begin{cases} \kappa_{\mu} \ &\mathrm{if} \ t\in B_\sigma,\\
\kappa_{\mu}-1/N \ &\mathrm{if} \ t\in B_\sigma^c.\end{cases}
\end{equation}
To prove \eqref{caso1escher} in this case, fix $t\in\{1,\dots,r\}$.

\noindent (i) If $t\in B_\sigma$ and $t\prec_\sigma\rho_\sigma(q)$, then Definition~\ref{phi}, \eqref{58}, Lemma~\ref{lema42}~(ii), and
\eqref{Dsigmaqn2} imply
\begin{equation}\label{59}
\phi_{t,\mu}+e_{\sigma(r)}+\kappa_{\mu}e_{r+1}=\sum_{i=1}^{t+1}e_{\widetilde{\sigma}(i-1)}+a(t+1,\widetilde{\sigma},\widetilde{n}+1)e_{r+1}
=\phi_{t+1,\widetilde{\mu}} \ \in \ D_{\widetilde{\mu}}
\end{equation}
(recall \ $a(t,\sigma,n+1)=a(t,\sigma,n)+1/N$ \ and \  $a(t+1,\widetilde{\sigma},\widetilde{n}+1)=a(t+1,\widetilde{\sigma},\widetilde{n})+1/N$).

\noindent (ii) If $t\in B_\sigma$ and $t\not\prec_\sigma\rho_\sigma(q)$, then Definition~\ref{phi}, \eqref{58}, Lemma~\ref{lema42}~(ii), and \eqref{Dsigmaqn2} imply
\begin{equation}\label{60}
\phi_{t,\mu}+e_{\sigma(r)}+\kappa_{\mu}e_{r+1}
=\sum_{i=1}^{t+1}e_{\widetilde{\sigma}(i-1)}+a(t+1,\widetilde{\sigma},\widetilde{n})e_{r+1}
=\phi_{t+1,\widetilde{\mu}} \ \in \ D_{\widetilde{\mu}}.
\end{equation}

\noindent (iii) If $t\in B_\sigma^c$ and $t\not\prec_\sigma\rho_\sigma(q)$, then Definition~\ref{phi}, \eqref{58}, Lemma~\ref{lema42}~(iii), and
\eqref{Dsigmaqn2} imply
\begin{equation}\label{61}
\phi_{t,\mu}+e_{\sigma(r)}+\kappa_{\mu}e_{r+1}
=\sum_{i=1}^{t+1}e_{\widetilde{\sigma}(i-1)}+a(t+1,\widetilde{\sigma},\widetilde{n}+1)e_{r+1}
=\phi_{t+1,\widetilde{\mu}} \ \in \ D_{\widetilde{\mu}}.
\end{equation}
Note that Lemma~\ref{lema42}~(iii) implies that $t$ can only satisfy one of the above three assumptions. Hence \eqref{caso1escher} follows from \eqref{59}, \eqref{60} and \eqref{61}.

Let us prove \eqref{caso2escher}. Using Definition~\ref{phi}, \eqref{60}, the definition of $\widetilde{q}$, and \eqref{Dsigmaqn2},
\begin{equation*}
\phi_{r+2,\mu}+e_{\sigma(r)}+\kappa_\mu e_{r+1}
=\phi_{\rho_\sigma(q)+1,\widetilde{\mu}}+(1/N)e_{r+1}
=\phi_{r+2,\widetilde{\mu}} \ \in \ D_{\widetilde{\mu}}.
\end{equation*}
Thus, we have proved Lemma~\ref{antesdeEscher} when $\rho_\sigma(q)\in B_\sigma$.

\noindent\textbf{Case 2.} Suppose $\rho_\sigma(q)\in B_\sigma^c$. Choose $\kappa_{\mu}\in\Z$
so that $n-\m\big(\varepsilon_{\sigma(r)}^{(1)}\big)+1+\kappa_{\mu} N$ lies in $\{0,\dots,N-1\}$.
Note that the definition of $\kappa_{\mu}$ here differs from the one given in case 1.
Let $\widetilde{n}:=n+1-\m\big(\varepsilon_{\sigma(r)}^{(1)}\big)+\kappa_{\mu} N$,
and $\widetilde{\mu}:=(\widetilde{\sigma},\widetilde{q},\widetilde{n})$. Then, Definition~\ref{phi}, \eqref{ktsigma'} and \eqref{revelacion}
imply for each $t\in\{1,\dots,r\}$ that
\begin{equation}\label{alturacaso2}
a(t+1,\widetilde{\sigma},\widetilde{n})-a(t,\sigma,n)=\begin{cases} \kappa_{\mu}+1/N \ &\mathrm{if} \ t\in B_\sigma\\
\kappa_{\mu}  \ &\mathrm{if} \ t\in B_\sigma^c\end{cases}.
\end{equation}
To prove \eqref{caso1escher}, fix $t\in\{1,\dots,r\}$.

\noindent (i) If $t\in B_\sigma$ and $t\prec_\sigma\rho_\sigma(q)$, then Definition~\ref{phi}, \eqref{alturacaso2}, Lemma~\ref{lema42}~(iii), and \eqref{Dsigmaqn2} imply
\begin{equation}\label{59caso2}
\phi_{t,\mu}+e_{\sigma(r)}+\kappa_{\mu}e_{r+1}
=\sum_{i=1}^{t+1}e_{\widetilde{\sigma}(i-1)}+a(t+1,\widetilde{\sigma},\widetilde{n})e_{r+1}
=\phi_{t+1,\widetilde{\mu}} \ \in \ D_{\widetilde{\mu}}.
\end{equation}

\noindent (ii) If $t\in B_\sigma^c$ and $t\prec_\sigma\rho_\sigma(q)$, then Definition~\ref{phi}, \eqref{alturacaso2}, Lemma~\ref{lema42}~(ii), and \eqref{Dsigmaqn2} imply
\begin{equation}\label{60caso2}
\phi_{t,\mu}+e_{\sigma(r)}+\kappa_{\mu}e_{r+1}
=\sum_{i=1}^{t+1}e_{\widetilde{\sigma}(i-1)}+a(t+1,\widetilde{\sigma},\widetilde{n}+1)e_{r+1}
=\phi_{t+1,\widetilde{\mu}} \ \in \ D_{\widetilde{\mu}}.
\end{equation}

\noindent (iii) If $t\in B_\sigma^c$ and $t\not\prec_\sigma\rho_\sigma(q)$, then Definition~\ref{phi}, \eqref{alturacaso2}, Lemma~\ref{lema42}~(ii), and \eqref{Dsigmaqn2} imply
\begin{equation}\label{61caso2}
\phi_{t,\mu}+e_{\sigma(r)}+\kappa_{\mu}e_{r+1}
=\sum_{i=1}^{t+1}e_{\widetilde{\sigma}(i-1)}+a(t+1,\widetilde{\sigma},\widetilde{n})e_{r+1}
=\phi_{t+1,\widetilde{\mu}} \ \in \ D_{\widetilde{\mu}}.
\end{equation}
Again, Lemma~\ref{lema42}~(iii) implies that $t$ can only satisfy one of the above three assumptions. Hence, \eqref{caso1escher} follows from \eqref{59caso2}, \eqref{60caso2} and \eqref{61caso2}.

Finally, using Definition~\ref{phi}, \eqref{60caso2}, the definition of $\widetilde{q}$, and \eqref{Dsigmaqn2},
\begin{equation*}
\phi_{r+2,\mu}+e_{\sigma(r)}+\kappa_{\mu}e_{r+1}
=\phi_{\rho_\sigma(q)+1,\widetilde{\mu}}+(1/N)e_{r+1}
=\phi_{r+2,\widetilde{\mu}} \ \in \ D_{\widetilde{\mu}},
\end{equation*}
which finishes the proof of Lemma~\ref{antesdeEscher}.
\end{proof}

We now construct our piecewise affine map $f$ with domain $D:=\cup_{\mu\in \widetilde{S}_r}D_\mu$.

\begin{proposition}\label{Escher}
There exists a continuous map \ $f:D\to \mip$ \ with the following properties:
\begin{enumerate}[$(i)$]
\item If \ $x\in D_{\mu}$, \ then \ $f(x)=A_{\mu}(x)$, \ where \ $A_\mu$ \ was defined in \eqref{afin}.

\item If \ $x\in D$ \ and \ $x+e_{r+1}\in D$, \ then \ $f(x+e_{r+1})=f(x)$.

\item If \ $x\in D$ \ and \ $x+e_j+\beta e_{r+1}\in D$ \ for some standard basis vector \ $e_j$ \ of \ $\R^{r+1}$ \ distinct from \ $e_{r+1}$, \ and \
    some \ $\beta\in\Z$, \ then
    $$f(x+e_j+\beta e_{r+1})=\widetilde{\varepsilon}_j\cdot f(x) \qquad\qquad (1\leq j\leq r).$$
\end{enumerate}
\end{proposition}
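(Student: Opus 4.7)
The plan is to define $f$ piecewise by setting $f|_{D_\mu} := A_\mu|_{D_\mu}$, so property (i) is built in and Lemma~\ref{afindentrosemiplano} guarantees $f(D_\mu)\subset\mip$. By Proposition~\ref{intersecciondesimplices} two simplices meet in the convex hull of their common vertices, and since $A_\mu$ and $A_{\mu'}$ are both affine it suffices to check $\varphi_{t,\mu}=\varphi_{t',\mu'}$ whenever $\phi_{t,\mu}=\phi_{t',\mu'}$. When both $t,t'\le r+1$, matching the first $r$ coordinates of the $\phi$'s forces $t=t'$ and $t\in\mathfrak{B}_{\sigma,\sigma'}$; then $\xi_\sigma(t)=\xi_{\sigma'}(t)$ and $d_{t,\sigma}=d_{t,\sigma'}$ by Lemma~\ref{Lema21previos}(iv), and equality of the $(r+1)$-coordinates together with the four possible pairings of the ``$n$ vs.\ $n+1$'' branches in Definition~\ref{phi} and the $N$-periodicity $\alpha_\ell=\alpha_{\ell+N}$ yields $f_{t,\mu}=f_{t',\mu'}$ as elements of $k$. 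The mixed cases involving $\phi_{r+2}$ reduce to the previous one since the spatial part of $\phi_{r+2,\mu}$ coincides with that of $\phi_{\rho_\sigma(q),\mu}$. With well-definedness established, continuity of $f$ follows from the gluing lemma applied to the finite simplicial cover $D=\bigcup_\mu D_\mu$.

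For property (ii), the slab structure in Definition~\ref{Dsigmaqn} and the chain \eqref{25} force any $x\in D$ with $x+e_{r+1}\in D$ to lie in the bottom face of $D(\sigma,r+1,0)$ and $x+e_{r+1}$ in the top face of $D(\sigma,1,N-1)$, for a common $\sigma$. A direct computation using \eqref{identidadesphi} and $a(t,\sigma,N)=a(t,\sigma,0)+1$ gives the vertex matching $\phi(t,\sigma,1,N-1)=\phi(t,\sigma,r+1,0)+e_{r+1}$ for $t\ne\rho_\sigma(1)$ and $\phi(r+2,\sigma,1,N-1)=\phi(\rho_\sigma(1),\sigma,r+1,0)+e_{r+1}$. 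Since $\rho_\sigma(1)$ is the $\prec_\sigma$-maximum, every $t\ne\rho_\sigma(1)$ satisfies $t\prec_\sigma\rho_\sigma(1)$, and $N$-periodicity of $\alpha$ produces $\varphi(t,\sigma,1,N-1)=\varphi(t,\sigma,r+1,0)$ and $\varphi(r+2,\sigma,1,N-1)=\varphi(\rho_\sigma(1),\sigma,r+1,0)$, which is (ii) on the common face.

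The main obstacle is property (iii), and the key tool is Lemma~\ref{antesdeEscher}. Given $x\in D$ with $x+e_j+\beta e_{r+1}\in D$, the condition $(x+e_j)^{(j)}\in[0,1]$ forces $x^{(j)}=0$, so I may select $\sigma\in S_r$ with $\sigma(r)=j$ and $x\in\triangle_\sigma$; on the generic $r$-dimensional face $\{v^{(\sigma(r))}=0\}$ of some $D_\mu=D(\sigma,q,n)$ with $\rho_\sigma(q)\ne r+1$, Lemma~\ref{antesdeEscher} produces $\widetilde\mu=(\widetilde\sigma,\widetilde q,\widetilde n)$ and $\kappa_\mu\in\Z$ realizing the vertex correspondence $\phi_{t,\mu}\mapsto\phi_{t+1,\widetilde\mu}$ ($1\le t\le r$) and $\phi_{r+2,\mu}\mapsto\phi_{r+2,\widetilde\mu}$ under $y\mapsto y+e_j+\kappa_\mu e_{r+1}$. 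Combining Lemma~\ref{lema42}(i), which gives $f_{t+1,\widetilde\sigma}=\varepsilon_{\sigma(r)}\cdot f_{t,\sigma}$, with a matching of the attached $\alpha$-factors via \eqref{ktsigma'}, \eqref{revelacion}, and the two sub-cases $t\in B_\sigma$ and $t\in B_\sigma^c$ used in the proof of Lemma~\ref{antesdeEscher}, I get $f_{t+1,\widetilde\mu}=\varepsilon_j\cdot f_{t,\mu}$ and $f_{r+2,\widetilde\mu}=\varepsilon_j\cdot f_{r+2,\mu}$. Applying $\ell$ turns multiplication by $\varepsilon_j$ into multiplication by $\widetilde\varepsilon_j$, so $\varphi_{t+1,\widetilde\mu}=\widetilde\varepsilon_j\cdot\varphi_{t,\mu}$ and similarly for $r+2$; by affineness of $A_\mu,A_{\widetilde\mu}$ this yields $f(x+e_j+\kappa_\mu e_{r+1})=\widetilde\varepsilon_j\cdot f(x)$. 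To remove the dependence on the particular $\kappa_\mu$, observe that for fixed first $r$ coordinates the slice of $D$ in the $e_{r+1}$-direction is an interval of length one, so $\beta$ and $\kappa_\mu$ are joined by a finite chain of unit $e_{r+1}$-shifts staying inside $D$, and iterated application of (ii) carries the identity across. Degenerate positions of $x$ on lower-dimensional faces shared by several simplices are handled by approximating with non-degenerate points and invoking the continuity of $f$ established in the first paragraph.
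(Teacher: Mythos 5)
Your proof is essentially correct and follows the same overall strategy as the paper: define $f$ piecewise via $A_\mu$ on each $D_\mu$, use Lemma~\ref{afindentrosemiplano} for the inclusion into $\mip$, Proposition~\ref{intersecciondesimplices} to reduce well-definedness to agreement at shared vertices, the vertex identities around \eqref{politoposciclicos} for (ii), and Lemma~\ref{antesdeEscher} together with (ii) for (iii). Two minor differences are worth noting. For the gluing step in (i), the paper avoids the case analysis on pairs of branches by observing the intrinsic formula $A_{\mu}(v)=\ell\big(\alpha(Nv^{(r+1)})\big)\cdot\prod_{j=1}^r\widetilde{\varepsilon}_j^{v^{(j)}}$ (its \eqref{verticeind}), which makes $A_\mu(v)$ manifestly depend only on the vertex $v$ and not on $\mu$; your direct matching of $\varphi$-values is workable but needs the mixed cases with $\phi_{r+2,\mu}$ handled with the same care as the $t\le r+1$ cases (the spatial coordinates force $t=\rho_{\sigma'}(q')$, and one must still compare the $(r+1)$-coordinates and the attached $\alpha$-factors). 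For the degenerate subcase of (iii) where $\rho_\sigma(q)=r+1$, the paper does not approximate: it notes $b_{r+1}=b_{r+2}=0$ and uses \eqref{identidadesphi} to place $x$ also in $D(\sigma,r,n)$, reducing directly to the generic case; your density-plus-continuity argument is a legitimate alternative, though it requires the (true but unstated) fact that the set of $x$ on the face $\{v^{(j)}=0\}$ for which the generic computation applies is dense in that face and that the equality $f(x+e_j+\beta e_{r+1})=\widetilde{\varepsilon}_j f(x)$ is a closed condition there.
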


\begin{proof}
To prove the existence of $f$ and (i), we only need to show that if $x\in D_{\mu}\cap D_{\mu'}$, then $A_{\mu}(x)=A_{\mu'}(x)$. Suppose $v=\phi_{t,\mu}$ ($1\leq t\leq r+2$) is a vertex of $D_{\mu}$. Then, using Definition~\ref{phi}, \eqref{afin}, \eqref{ftmu}, and \eqref{ftsigma}, we have
\begin{equation}\label{verticeind}
A_{\mu}(v)=\ell\big(\alpha(Nv^{(r+1)})\big)\cdot\prod_{j=1}^r\widetilde{\varepsilon}_j^{v^{(j)}}.
\end{equation}
Since the last expression is independent of $\mu$, we have $A_{\mu}(v)=A_{\mu'}(v)$ whenever $v$ is a vertex of $D_{\mu}$ and of $D_{\mu'}$. But Proposition~\ref{intersecciondesimplices} implies that $D_{\mu'}\cap  D_{\mu}$ is a $d$-simplex (for some $1\leq d\leq r$) whose $d+1$ vertices are also vertices of $ D_{\mu}$ and of $D_{\mu'}$. An affine map on a $d$-simplex is uniquely determined by its values on the $d+1$ vertices, so $A_{\mu}(x)=A_{\mu'}(x)$ for all $x\in D_{\mu'}\cap  D_{\mu}$.

To prove (ii), note that \eqref{D} shows us that $x\in P_{1}(\sigma,r+1,0)\subset D(\sigma,r+1,0)$ for some $\sigma\in S_r$. If we write $x$ in its barycentric coordinates with respect to the vertices of $P_{1}(\sigma,r+1,0)$, $x=\sum_{t=1}^{r+1}b_t\phi(t,\sigma,r+1,0)$, $b_t\geq 0$, $\sum_{t=1}^{r+1}b_t=1$,
then using Definition~\ref{phi},
\begin{align*}
e_{r+1}+x &= \sum_{t=1}^{r+1}b_t\big(e_{r+1}+\phi(t,\sigma,r+1,0)\big)\\
 &= b_{\rho_\sigma(1)}\phi(r+2,\sigma,1,N-1)+\sum_{\substack{1\leq t\leq r+1\\ t\not=\rho_\sigma(1)}}b_t\phi(t,\sigma,1,N-1) \ \in \ D(\sigma,1,N-1),
\end{align*}
where the last equality follows from \eqref{politoposciclicos}. Hence, using (i), \eqref{prop.afin}, \eqref{afin}, and \eqref{aym},
\begin{align*}
f(x) &= A(\sigma,r+1,0)(x) = \sum_{t=1}^{r+1}b_t\varphi(t,\sigma,r+1,0)\\
&= b_{\rho_\sigma(1)}\varphi(\rho_\sigma(1),\sigma,r+1,0)+\sum_{\substack{1\leq t\leq r+1\\t\not=\rho_\sigma(1)}}b_t\varphi(t,\sigma,r+1,0)\\
&= b_{\rho_\sigma(1)}\varphi(r+2,\sigma,1,N-1)+\sum_{\substack{1\leq t\leq r+1\\t\not=\rho_\sigma(1)}}b_t\varphi(t,\sigma,1,N-1)\\
&= A(\sigma,1,N-1)(x+e_{r+1})=f(x+e_{r+1}).
\end{align*}

Now let us prove (iii). Since $x\in D$ \ and \ $x+e_j+\beta e_{r+1}\in D$, we have that $x\in D_{\mu}$ for some $\mu=(\sigma,q,n)\in \widetilde{S}_r$, where we can suppose $\sigma(r)=j$ because $x^{(j)}=0$ (see Definition~\ref{Dsigmaqn}). Writing $x$ in barycentric coordinates with respect to $D_{\mu}$: $x=\sum_{t=1}^{r+2}b_t\phi_{t,\mu}$, $b_t\geq 0$, $\sum_{t=1}^{r+2}b_t=1$,
note that
$$0=x^{(j)}=x^{(\sigma(r))}=xe_{\sigma(r)}^T=\begin{cases} b_{r+2}+b_{r+1} \ &\mathrm{if} \ \rho_\sigma(q)=r+1\\
b_{r+1} \ &\mathrm{if} \ \rho_\sigma(q)\not=r+1\end{cases}.
$$
We will divide the proof into two cases, depending on whether $\rho_\sigma(q)=r+1$ or not. First suppose that $\rho_\sigma(q)\not=r+1$.
Since $\sum_{t=1}^{r+2}b_t=1$ and $b_{r+1}=0$, Lemma~\ref{antesdeEscher} implies that there exist $\kappa_{\mu}\in \Z$ and $\widetilde{\mu}=(\widetilde{\sigma},\widetilde{q},\widetilde{n})\in \widetilde{S}_r$ such that
\begin{align*}
x+e_j+\kappa_{\mu}e_{r+1} &= b_{r+2}(\phi_{r+2,\mu}+e_j+\kappa_{\mu}e_{r+1}) +\sum_{t=1}^rb_t(\phi_{t,\mu}+e_j+\kappa_{\mu}e_{r+1}) \\
&=b_{r+2}\phi_{r+2,\widetilde{\mu}} +\sum_{t=1}^rb_t\phi_{t+1,\widetilde{\mu}} \ \in \ D_{\widetilde{\mu}}\subset D,
\end{align*}
where $\widetilde{\sigma}(1):=\sigma(r)$ and $\widetilde{\sigma}(j):=\sigma(j-1)$ ($2\leq j\leq r$). Moreover, since $x+e_j+\beta e_{r+1}\in D$,
Definition~\eqref{D} implies that $\kappa_\mu\in\{\beta-1,\,\beta,\,\beta+1\}$. Therefore, using (ii),
\begin{equation}\label{betaykappa}
f(x+e_j+\beta e_{r+1})=f(x+e_j+\kappa_{\mu}e_{r+1}).
\end{equation}
Then, (i), \eqref{prop.afin}, and \eqref{afin} show that
\begin{equation}\label{Escher2}
f(x+e_j+\kappa_{\mu}e_{r+1})=A_{\widetilde{\mu}}(x+e_j+\kappa_{\mu}e_{r+1})= b_{r+2}\varphi_{r+2,\widetilde{\mu}}+\sum_{t=1}^rb_t\varphi_{t+1,\widetilde{\mu}}.
\end{equation}
On the other hand, putting $v=\phi_{r+2,\mu}$, we can use \eqref{verticeind} to compute
\begin{align}
\nonumber
&\varphi_{r+2,\widetilde{\mu}}=:A_{\widetilde{\mu}}(\phi_{r+2,\widetilde{\mu}})=A_{\widetilde{\mu}}(\phi_{r+2,\mu}+e_j+\kappa_\mu e_{r+1})\\
\nonumber
&=\ell\big(\alpha(Nv^{(r+1)}+N\kappa_\mu)\big)\cdot\prod_{1\leq i\leq r}\widetilde{\varepsilon}_i^{(v^{(i)}+e_j^{(i)})}=\widetilde{\varepsilon}_j\cdot\ell\big(\alpha(Nv^{(r+1)})\big)\cdot\prod_{1\leq
i\leq r}\widetilde{\varepsilon}_i^{v^{(i)}}\\
&=\widetilde{\varepsilon}_j\cdot\varphi_{r+2,\mu}=\widetilde{\varepsilon}_j\cdot A_\mu({\phi_{r+2,\mu}}).\label{Escher3}
\end{align}
Analogously, putting $w=\phi_{t,\mu}$ ($1\leq t\leq r$),
\begin{align}
\nonumber
&\varphi_{t+1,\widetilde{\mu}}=:A_{\widetilde{\mu}}(\phi_{t+1,\widetilde{\mu}})=A_{\widetilde{\mu}}(\phi_{t,\mu}+e_j+\kappa_\mu e_{r+1})\\
\nonumber
&=\ell\big(\alpha(Nw^{(r+1)}+N\kappa_\mu)\big)\cdot\prod_{1\leq i\leq r}\widetilde{\varepsilon}_i^{(w^{(i)}+e_j^{(i)})}=\widetilde{\varepsilon}_j\cdot\ell\big(\alpha(Nw^{(r+1)})\big)\cdot\prod_{1\leq
i\leq r}\widetilde{\varepsilon}_i^{w^{(i)}}\\
&=\widetilde{\varepsilon}_j\cdot\varphi_{t,\mu}=\widetilde{\varepsilon}_j\cdot A_\mu({\phi_{t,\mu}}) \qquad (1\leq t\leq r).\label{Escher4}
\end{align}
Thus, \eqref{Escher3}, \eqref{Escher4}, \eqref{prop.afin}, and (i) imply that the right-hand side of \eqref{Escher2} equals
\begin{align*}
\widetilde{\varepsilon}_j\cdot \Big(b_{r+2}A_\mu({\phi_{r+2,\mu}})+\sum_{t=1}^rb_tA_\mu({\phi_{t,\mu}})\Big)=\widetilde{\varepsilon}_j\cdot
A_\mu(x)=\widetilde{\varepsilon}_j\cdot f(x).
\end{align*}
This, together with \eqref{betaykappa} proves Proposition~\ref{Escher} in this case.

Now suppose that $\rho_\sigma(q)=r+1$. Then, $q=r+1$ since $\rho_\sigma(r+~1)=r+1$ \big(see \eqref{fijoporrho}\big). Note that $b_{r+2}=b_{r+1}=0$ since $b_{r+2}+b_{r+1}=0$ and $b_{r+2}, b_{r+1}\geq 0$. Here, if $x\in D_{\mu}=D(\sigma,r+1,n)$, then $x\in D(\sigma,r,n)$
since we have $\phi(t,\sigma,r+1,n)=\phi(t,\sigma,r,n)$ ($1\leq t\leq r$) by \eqref{identidadesphi}. Therefore, the proof reduces to the case
$\rho_\sigma(q)\not=r+1$.
\end{proof}

\section{Proof of Theorem~\ref{Main}}

\subsection{Maps descending to tori}

Now we define the maps $F$ and $F_0$ mentioned at the beginning of section \ref{constructionoff}. Recall the $(r+1)$-torus $\widehat{T}$, defined in \eqref{torocanonico} by identifying the elements of
$$D:=\bigcup_{\sigma\in S_r}\left\{x\in \R^{r+1} \left|\begin{array}{c}
1\geq x^{(\sigma(1))}\geq x^{(\sigma(2))}\geq\dots\geq x^{(\sigma(r))}\geq 0,\\
y(\sigma,r+1,0)(x)\leq x^{(r+1)}\leq 1+y(\sigma,r+1,0)(x).
\end{array}\right.\right\}$$
that lie in the same orbit with respect to the action on $\R^{r+1}$ of the subgroup $\Z^{r+1}$. Also recall that the set
$$\mathfrak{D}:=\bigcup_{\sigma\in S_r}\left\{x\in \R^{r+1} \left|\begin{array}{c}
1> x^{(\sigma(1))}\geq x^{(\sigma(2))}\geq\dots\geq x^{(\sigma(r))}\geq 0,\\
y(\sigma,r+1,0)(x)\leq x^{(r+1)}< 1+y(\sigma,r+1,0)(x).
\end{array}\right.\right\}$$
defined in \eqref{D2} is a fundamental domain for this action. Proposition~\ref{Escher} means that the piecewise affine map $f:D\to\mip$ descends to a continuous map $F$ between $\widehat{T}$ and the quotient $\big(\mip\big)/\widetilde{V}$ coming from the action of $\widetilde{V}:=\langle \widetilde{\varepsilon}_1,\dots,\widetilde{\varepsilon}_r\rangle$ on $\mip$ by component-wise multiplication. More precisely, $F:\widehat{T}\to \big(\mip\big)/\widetilde{V}$ is defined by the commutative diagram
\begin{equation}\label{diagramaF}
\begin{CD}
D @>f>> \mip\\
@VV\widehat{\pi}V @VV\pi V\\
\widehat{T} @>F>{\phantom{\simeq}}> \big(\mip\big)/\widetilde{V}
\end{CD},
\end{equation}
where $\widehat{\pi}$ and $\pi$ are the natural quotient maps, and $f$ was defined in Proposition~\ref{Escher}.

There is another function between $\widehat{T}$ and $\big(\mip\big)/\widetilde{V}$ that naturally comes from a function on $D$. We define $f_0:D\to \mip$ by
\begin{equation}\label{f0}
\big(f_0(x)\big)^{(j)}:=\begin{cases} \big(\widetilde{\varepsilon}_1^{(1)}\big)^{x^{(1)}}\dots \big(\widetilde{\varepsilon}_r^{(1)}\big)^{x^{(r)}}\cdot\e(2\pi i
x^{(r+1)}) \ &\mathrm{if} \ j=1,\\
\big(\widetilde{\varepsilon}_1^{(j)}\big)^{x^{(1)}}\dots \big(\widetilde{\varepsilon}_r^{(j)}\big)^{x^{(r)}} \ &\mathrm{if} \ 2\leq j\leq r,\end{cases}
\end{equation}
for all $x=(x^{(1)},\dots,x^{(r+1)})\in D$, where powers are defined using the branch of the argument in $[-\pi,\pi)$. From \eqref{f0}, it is clear that $f_0$ is a continuous map that satisfies properties (ii) and (iii) of Proposition~\ref{Escher}. Thus, $f_0$ descends to a continuous map $F_0:\widehat{T}\to \big(\mip\big)/\widetilde{V}$ defined by the commutative diagram
\begin{equation}\label{diagramaF0}
\begin{CD}
D @>f_0 >> \mip\\
@VV\widehat{\pi}V @VV\pi V\\
\widehat{T} @>F_0>{\phantom{\simeq}}> \big(\mip\big)/\widetilde{V}
\end{CD},
\end{equation}
where $\widehat{\pi}$ and $\pi$ are again the natural quotient maps, and $f_0$ was defined in \eqref{f0}.

Let us write, using upper and lower case to distinguish the slightly different domains,
\begin{align}\label{LOG}
&\LOG:\mip\to \R^r,\quad\big(\LOG(x)\big)^{(j)}:=\log|x^{(j)}| \qquad (1\leq j\leq r),\\
\label{Log}
&\Log:\mi\to \R^r,\quad\big(\Log(x)\big)^{(j)}:=\log|x^{(j)}| \qquad (1\leq j\leq r).
\end{align}
Note that the function in \eqref{Log} is used in the sixth step of the 7SA.

\begin{lemma}\label{reguladorconsigno} Let \ $\LOG$ \ and \ $\Log$ \ be the functions defined respectively in \eqref{LOG} and \eqref{Log}. Then
\begin{equation}\label{lema48}
\det\!\big(\LOG(\widetilde{\varepsilon}_1),\dots,\LOG(\widetilde{\varepsilon}_r)\big)=(r+2)\det\!\big(\Log(\varepsilon_1),\dots,\Log(\varepsilon_r)\big).
\end{equation}
In particular, none of the determinants in \eqref{lema48} vanish, both have the same sign, and \ $\Lambda:=\sum_{j=1}^r\Z\cdot\LOG(\widetilde{\varepsilon}_j)\subset\R^r$ \ is a full lattice.
\end{lemma}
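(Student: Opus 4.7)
Write $L_j^{(i)}:=\log|\varepsilon_j^{(i)}|$ for $1\le i\le r+1$, $1\le j\le r$. My plan is to exhibit the passage from the columns $\Log\varepsilon_j$ to the columns $\LOG\widetilde{\varepsilon}_j$ as left multiplication by a single rank-one perturbation of the identity, and then invoke the matrix determinant lemma. First I would unwind definitions: since $\widetilde{\varepsilon}_j^{(i)}=\varepsilon_j^{(i)}/\varepsilon_j^{(r+1)}$ for $1\le i\le r$, the $i$-th entry of $\LOG\widetilde{\varepsilon}_j$ is $L_j^{(i)}-L_j^{(r+1)}$, while $(\Log\varepsilon_j)^{(i)}=L_j^{(i)}$.

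Next I would use that each $\varepsilon_j$ is a totally positive unit in a field with one complex and $r$ real places, so $|\varepsilon_j^{(1)}|^2\cdot\varepsilon_j^{(2)}\cdots\varepsilon_j^{(r+1)}=N_{k/\Q}(\varepsilon_j)=1$ and therefore
\[
L_j^{(r+1)}=-2L_j^{(1)}-L_j^{(2)}-\cdots-L_j^{(r)}.
\]
Substituting this into $L_j^{(i)}-L_j^{(r+1)}$ shows $\LOG\widetilde{\varepsilon}_j=(I+\mathbf{1}\mathbf{v}^{T})\Log\varepsilon_j$, where $\mathbf{1}=(1,\dots,1)^{T}$ and $\mathbf{v}=(2,1,1,\dots,1)^{T}$ both lie in $\R^r$. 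Setting $M:=(\Log\varepsilon_1,\dots,\Log\varepsilon_r)$ and $\widetilde{M}:=(\LOG\widetilde{\varepsilon}_1,\dots,\LOG\widetilde{\varepsilon}_r)$, this gives the clean matrix identity $\widetilde{M}=(I+\mathbf{1}\mathbf{v}^{T})M$.

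The heart of the proof is then the matrix determinant lemma $\det(I+uv^{T})=1+v^{T}u$ applied to $u=\mathbf{1}$, $v=\mathbf{v}$, yielding
\[
\det(I+\mathbf{1}\mathbf{v}^{T})=1+\mathbf{v}^{T}\mathbf{1}=1+2+(r-1)=r+2,
\]
hence $\det\widetilde{M}=(r+2)\det M$, which is \eqref{lema48}.

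For the ``in particular'' statement, I would appeal to Dirichlet's unit theorem: the classical log embedding identifies $\Ok$ with a full lattice of rank $r$ inside the hyperplane $H:=\{x\in\R^{r+1}:2x^{(1)}+x^{(2)}+\cdots+x^{(r+1)}=0\}$. The projection $H\to\R^r$ onto the first $r$ coordinates is a linear bijection, since the defining relation of $H$ recovers the dropped coordinate from the others, so the independence of $\varepsilon_1,\dots,\varepsilon_r$ forces $\Log\varepsilon_1,\dots,\Log\varepsilon_r$ to be $\R$-linearly independent in $\R^r$, giving $\det M\ne 0$. The identity $\det\widetilde{M}=(r+2)\det M$ immediately yields $\det\widetilde{M}\ne 0$ with the same sign, so the $\LOG\widetilde{\varepsilon}_j$ are linearly independent and $\Lambda$ is a full lattice in $\R^r$. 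I do not anticipate any serious obstacle; the whole argument is a short unwinding of the definitions, one application of the norm relation, and a rank-one determinant computation.
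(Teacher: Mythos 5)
Your proof is correct and takes essentially the same route as the paper: the paper's matrix $B_r$ (first column all $2$'s, remaining entries $1$'s) is exactly your rank-one matrix $\mathbf{1}\mathbf{v}^{T}$, and the paper computes $\det(I_r+B_r)=r+2$ via the characteristic polynomial of $B_r$ evaluated at $\lambda=-1$, which is just another way of running the same rank-one determinant computation you carry out via the matrix determinant lemma. The only extra content in your write-up is the explicit justification via Dirichlet's unit theorem that $\det M\neq 0$, which the paper treats as already known (it is half the regulator of the independent units, as noted in the remarks following the seven-step algorithm).
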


\begin{proof}
We follow \cite[Lemma 19]{DF1}. Using the identity $|\varepsilon_j^{(1)}|^2\cdot\prod_{i=2}^{r}\varepsilon_j^{(i)}=1/\varepsilon_j^{(r+1)}$ $(1\leq j\leq r)$,
\eqref{lema48} reduces to showing $r+2=\det(I_r+B_r)$, where the $r\times r$ matrices $I_r$ and $B_r$ are, respectively, the identity and the matrix whose first column has only entries equal to 2 and all the other entries are 1's. But $\det(\lambda I_r-B_r)=\lambda^{r-1}\big(\lambda-(r+1)\big)$, using the eigenvalues 0 and $r+1$ of $B_r$. Setting $\lambda=-1$ concludes the proof.
\end{proof}

We will soon show that $f_0(\mathfrak{D})$ is a fundamental domain for the action of $\widetilde{V}$ on $\mip$, but we first make some geometric remarks. If $\ell(\varepsilon)=\widetilde{\varepsilon}\in \widetilde{V}$ satisfies $|\widetilde{\varepsilon}^{(j)}|=1$ for all $1\leq j\leq r$ (recall $r>0$), then
$|\varepsilon^{(1)}|=\varepsilon^{(2)}=\dots=\varepsilon^{(r+1)}$. Since $1=|N_{k/\Q}(\varepsilon)|$, we have then also $|\varepsilon^{(r+1)}|=1$. Since $\varepsilon^{(r+1)}>0$, we see that
\begin{equation}\label{unidadnormauno}
\varepsilon\in V, \quad \ell(\varepsilon)=\widetilde{\varepsilon}, \quad |\widetilde{\varepsilon}^{(j)}|=1 \quad (1\leq j\leq r) \qquad \Longrightarrow \qquad
\varepsilon=1.
\end{equation}

If $\nu:\mip\to \R_+^r$ is defined by the formula $\big(\nu(x)\big)^{(j)}:=|x^{(j)}|$ $(1\leq j\leq r)$, then Lemma~\ref{reguladorconsigno} implies that $(\nu\circ f_0)(\mathfrak{D})$ is a fundamental domain for the action of $\nu(\widetilde{V})$ on $\R_+^r$. Since the exponential $\e(2\pi i x^{(r+1)})$ in
\eqref{f0}, restricted to $\mathfrak{D}$, runs over the unit circle exactly once, it is clear that each $x\in\mip$ is in the orbit of some
$\widetilde{x}\in f_0(\mathfrak{D})$ under the action of $\widetilde{V}$. Furthermore, if there are two such elements $\widetilde{x}, \widetilde{y}\in f_0(\mathfrak{D})$, then $\nu(\widetilde{x}) \ \mathrm{and} \ \nu(\widetilde{y})\in (\nu\circ f_0)(\mathfrak{D})$ belong to the same orbit under the action of $\nu(\widetilde{V})$, which implies that $\nu(\widetilde{x})=\nu(\widetilde{y})$. But this means that $\widetilde{x}^{(j)}=\widetilde{y}^{(j)}$ for $j\geq 2$, and $|\widetilde{x}^{(1)}|=|\widetilde{y}^{(1)}|$, which implies $x=y$ \big(see \eqref{unidadnormauno}\big). Therefore, $f_0(\mathfrak{D})$ is a fundamental domain of $\mip$ under the action of $\widetilde{V}$, and $F_0:\widehat{T}\to \big(\mip\big)/\widetilde{V}$ is surjective.

We now prove that $F_0$ is a homeomorphism. If $f_1:\R^r\to \mip$ is defined by
$$\big(f_1(x)\big)^{(j)}:=\big(\widetilde{\varepsilon}_1^{(j)}\big)^{x^{(1)}}\dots \big(\widetilde{\varepsilon}_r^{(j)}\big)^{x^{(r)}} \qquad (1\leq j\leq r),$$
then the composition $(\LOG\circ f_1):\R^r\to \R^r$ is a homeomorphism that satisfies
\begin{equation}\label{f1inyectiva}
(\LOG\circ f_1)(x)=(\LOG\circ f_0)(x,b) \qquad \big(x\in[0,1]^{r}, \ b\in\R, \ (x,b)\in D\big).
\end{equation}
Hence, definition \eqref{f0} and \eqref{f1inyectiva} imply that $f_0$ is injective on $\mathfrak{D}$. Now take two elements $x,y\in\mathfrak{D}$, and denote by $[x], [y]$ their respective cosets in $\widehat{T}$. If $F_0([x])=F_0([y])$, using \eqref{diagramaF0} we have $f_0(x)=u\cdot f_0(y)$ for some $u\in \widetilde{V}$. But $f_0(y)$ and $u\cdot f_0(y)$ lie in $f_0(\mathfrak{D})$, which implies that $u=1$ since $f_0(\mathfrak{D})$ is a fundamental domain for the action of $\widetilde{V}$ on $\mip$. Hence, $x=y$ since $f_0$ is injective on $\mathfrak{D}$. Therefore, $F_0:\widehat{T}\to \big(\mip\big)/\widetilde{V}$ is a continuous bijective map on a compact set. So $F_0$ is a homeomorphism, and the quotient
\begin{equation}\label{toro}
T:=\big(\mip\big)/\widetilde{V}
\end{equation}
is an $(r+1)$-torus.

Now let us prove a result for $f_0$ analogous to Lemma~\ref{afindentrosemiplano}, which will allow us to define a homotopy between
$F$ and $F_0$. Recall that
$$
\big(f_0(x)\big)^{(1)}=\big|\widetilde{\varepsilon}_1^{(1)}\big|^{x^{(1)}}\dots \big|\widetilde{\varepsilon}_r^{(1)}\big|^{x^{(r)}}\cdot\e\big(\omega(x)i\big)\qquad (x\in D),
$$
where $\omega:\R^{r+1}\to \R$ is the $\R$-linear map
\begin{equation}\label{omega}
\omega(x):=2\pi x^{(r+1)}+\sum_{j=1}^{r}\arg\!\big(\widetilde{\varepsilon}_j^{(1)}\big)x^{(j)}.
\end{equation}

\begin{lemma}\label{f0dentrosemiplano}
Let \ $\mu=(\sigma,q,n)\in \widetilde{S}_r$. \ Then
$$f_0(D_{\mu}) \ \subset \ \big(f_{\rho_\sigma(q),\sigma}^{(1)}\cdot\s_{Na(\rho_\sigma(q),\sigma,n)}\big)\times \R_+^{r-1} \ \subset \ \mip.$$
\end{lemma}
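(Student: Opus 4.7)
The plan is to reduce the claim to a statement about the argument of the first coordinate of $f_0$, exploit the linearity of the argument-lift $\omega$ on the convex simplex $D_\mu$, and then check the claim at the $r+2$ vertices, mirroring the vertex-by-vertex strategy of Lemma \ref{afindentrosemiplano} but with the explicit center-of-arm point $\e(2\pi i a(t,\sigma,\cdot))$ playing the role formerly played by $\alpha^{(1)}\in\mathring{\A}$.

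First I will dispose of coordinates $j\ge2$: since $\varepsilon_i^{(j)}$ is a positive real for $j\ge2$ (totally positive units), so is $\widetilde{\varepsilon}_i^{(j)}$, and thus $(f_0(x))^{(j)}>0$ by definition \eqref{f0}. This places the projection of $f_0(x)$ to the last $r-1$ coordinates inside $\R_+^{r-1}$. The substantive content is the first coordinate. Writing $(f_0(x))^{(1)}=\prod_j|\widetilde{\varepsilon}_j^{(1)}|^{x^{(j)}}\cdot\e(i\omega(x))$ with $\omega$ as in \eqref{omega}, and noting that the target sector $f_{\rho_\sigma(q),\sigma}^{(1)}\cdot\s_{Na(\rho_\sigma(q),\sigma,n)}$ is invariant under multiplication by positive reals, it suffices to show that, setting $\omega^*:=\arg(f_{\rho_\sigma(q),\sigma}^{(1)})+2\pi a(\rho_\sigma(q),\sigma,n)$, one has $\omega(x)-\omega^*\in [-\pi/(2N),5\pi/(2N))\pmod{2\pi}$ for every $x\in D_\mu$. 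Since $\omega$ is $\R$-linear and $D_\mu$ is the convex hull of $\{\phi_{t,\mu}\}_{t=1}^{r+2}$, the image $\omega(D_\mu)$ is the closed interval bounded by the extreme vertex values, and it is enough to place each $\omega(\phi_{t,\mu})-\omega^*$ in a common arc of length less than $2\pi$ that sits inside the target.

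Next I will carry out the vertex computation. Using \eqref{dtsigma} together with $\xi_\sigma(t)$ having argument $S_t-S_{r+1}\pmod{2\pi}$, set $\delta_t:=\arg\!\big(\xi_\sigma(t)\e(2\pi i\m(\xi_\sigma(t))/N)\big)\in[0,2\pi/N)$ (cf. Lemma \ref{mprop}(iii)); a short computation then gives
\begin{equation*}
\omega(\phi_{t,\mu})-\omega^*=\delta_t-\delta_{\rho_\sigma(q)}+\tfrac{2\pi}{N}\cdot\mathbf{1}_{t\prec_\sigma\rho_\sigma(q)}\qquad(1\le t\le r+1),
\end{equation*}
and $\omega(\phi_{r+2,\mu})-\omega^*=2\pi/N$. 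The crucial monotonicity claim is that $t\prec_\sigma t'$ implies $\delta_t\le\delta_{t'}$; this will come from condition \eqref{orden1}, which ensures $\m(\xi_\sigma(t,t'))\equiv\m(\xi_\sigma(t'))-\m(\xi_\sigma(t))\pmod{N}$, so that Lemma \ref{mprop}(iii) applied to $z=\xi_\sigma(t,t')$ forces $\delta_{t'}-\delta_t\in[0,2\pi/N)\pmod{2\pi}$, and because $\delta_{t'}-\delta_t\in(-2\pi/N,2\pi/N)$ the only consistent lift is $\delta_{t'}-\delta_t\in[0,2\pi/N)$. Combining this monotonicity with the classification in \eqref{ftmu} shows that every vertex value $\omega(\phi_{t,\mu})-\omega^*$ lies in $[0,2\pi/N]$, which is contained in $[-\pi/(2N),5\pi/(2N))$ since $2\pi/N<5\pi/(2N)$.

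Finally, by linearity of $\omega$ and convexity of $D_\mu$ (as $D_\mu=P(\phi_{1,\mu},\dots,\phi_{r+2,\mu})$ by \eqref{Dsigmaqn2}), $\omega(x)-\omega^*$ is a convex combination of the vertex values and hence also lies in $[0,2\pi/N]\subset[-\pi/(2N),5\pi/(2N))$, proving the required inclusion for the first coordinate and completing the lemma. The main obstacle is the monotonicity statement $\delta_t\le\delta_{t'}$ for $t\prec_\sigma t'$: this is what genuinely uses \eqref{orden1}, and it is the place where the combinatorial definition of $\prec_\sigma$ converts into a sharp metric estimate on the angles; everything else is assembly of vertex computations and the linearity of $\omega$.
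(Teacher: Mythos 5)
Your proof is correct and follows essentially the same route as the paper's: reduce to the first coordinate, compute the $\R$-linear lift $\omega$ of the argument at the $r+2$ vertices $\phi_{t,\mu}$ relative to $\phi_{\rho_\sigma(q),\mu}$, establish the key monotonicity $t\prec_\sigma t' \Rightarrow \delta_t\le\delta_{t'}$ for $\delta_t:=\arg\!\big(\xi_\sigma(t)\e(2\pi i\m(\xi_\sigma(t))/N)\big)$ (this is exactly the paper's $\theta_{t,\sigma}$ and its claim \eqref{claim}), and conclude by convexity of $D_\mu=P(\phi_{1,\mu},\dots,\phi_{r+2,\mu})$. The only genuine difference is in the monotonicity step: you deduce it directly by applying Lemma~\ref{mprop}~(iii) to $z=\xi_\sigma(t,t')$ together with \eqref{orden1}, obtaining $\delta_{t'}-\delta_t\equiv\eta\pmod{2\pi}$ with $\eta\in[0,2\pi/N)$ and then lifting to an equality since $\delta_{t'}-\delta_t\in(-2\pi/N,2\pi/N)$, whereas the paper argues by contradiction with a ceiling-function estimate coming from \eqref{dtsigma}; these are the same calculation viewed from two sides. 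One small imprecision: your displayed identity $\omega(\phi_{t,\mu})-\omega^*=\delta_t-\delta_{\rho_\sigma(q)}+\tfrac{2\pi}{N}\mathbf{1}_{t\prec_\sigma\rho_\sigma(q)}$ holds only modulo $2\pi$ (since $\omega^*$ uses $\arg(f_{\rho_\sigma(q),\sigma}^{(1)})\in[-\pi,\pi)$ rather than the actual partial sum of unit arguments), but the discrepancy is a fixed multiple of $2\pi$ independent of $t$, so the convexity argument and the final modular inclusion go through unchanged.
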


\begin{proof} Put $\theta_{t,\sigma}:=\arg\!\Big(\xi_\sigma(t)\cdot\e\big(2\pi i\m(\xi_\sigma(t))/N\big)\Big)$. We claim
\begin{equation}\label{claim}
t\prec_\sigma t' \quad \Longrightarrow \quad \theta_{t,\sigma}\le\theta_{t',\sigma}\qquad (\text{for all $t, t'\in\{1,\dots,r+1\}$ and $\sigma\in S_r$}).
\end{equation}
For the sake of contradiction, suppose $t\prec_\sigma t'$ and $\theta_{t,\sigma}>\theta_{t',\sigma}$. From \eqref{dtsigma}, there exists $q\in\Z$ such that
$$\frac{-N}{2\pi}(\theta_{t',\sigma}-\theta_{t,\sigma})=\frac{-N\arg(\xi_\sigma(t,t'))}{2\pi}-\m(\xi_\sigma(t'))+\m(\xi_\sigma(t))+Nq \ \in \ (0, 1).$$
Evaluating the ceiling function at the last expression we contradict condition \eqref{orden1}, and so we have proved \eqref{claim}.

In proving Lemma~\ref{f0dentrosemiplano}, \eqref{f0} implies that we have only to worry about the first coordinate of the elements in $f_0(D_{\mu})$. We only have to study three cases for $\omega(\phi_{t,\mu})$ with $t\in\{1,\dots,r+2\}$.

\noindent \emph{Case 1}. If $1\leq t\leq r+1$ with $t\prec_\sigma\rho_\sigma(q)$, then Definition~\ref{phi}, \eqref{claim}, and \eqref{dtsigma} imply
$$\omega(\phi_{t,\mu})- \omega(\phi_{\rho_\sigma(q),\mu})=\theta_{t,\sigma}-\theta_{\rho_\sigma(q),\sigma}+2\pi/N \ \in \ (0,2\pi/N]$$
since $\arg\!\big(\varepsilon_{j}^{(1)}\big)=\arg\!\big(\widetilde{\varepsilon}_{j}^{(1)}\big)$ for all $1\leq j\leq r$.

\noindent \emph{Case 2}. If $1\leq t\leq r+1$ with $\rho_\sigma(q)\prec_\sigma t$, then Definition~\ref{phi}, \eqref{claim}, and \eqref{dtsigma} imply
$$\omega(\phi_{t,\mu})- \omega(\phi_{\rho_\sigma(q),\mu})=\theta_{t,\sigma}-\theta_{\rho_\sigma(q),\sigma} \ \in \ [0,2\pi/N)$$

\noindent \emph{Case 3}. If $t=\rho_\sigma(q)$ or $t=r+2$,
$$\omega(\phi_{t,\mu})- \omega(\phi_{\rho_\sigma(q),\mu}) \ \in \ \{0, 2\pi /N\}.$$

Therefore, using the linearity of $\omega$, and the convexity of $D_{\mu}$ and $[0,2\pi/N]$, the above three cases allow us to claim that $\omega(D_{\mu})$ is contained in
\begin{equation*}
[\omega(\phi_{\rho_\sigma(q),\mu}) \ , \ \omega(\phi_{\rho_\sigma(q),\mu})+2\pi/N]
\ \subset \ [\omega(\phi_{\rho_\sigma(q),\mu})-\pi/2N \ , \ \omega(\phi_{\rho_\sigma(q),\mu})+5\pi/2N).
\end{equation*}
Thus, the proof follows from definition \eqref{semiplanos}, and from the identity
$$\omega(\phi_{\rho_\sigma(q),\mu})=2\pi a(\rho_\sigma(q),\sigma,n)+\sum_{j=1}^{\rho_\sigma(q)-1}\arg\!\big(\widetilde{\varepsilon}_{\sigma(j)}^{(1)}\big).$$
\end{proof}

The next lemma summarizes the properties of $F$ and $F_0$ that we shall use later.

\begin{lemma}\label{homotopicas}
Let \ $F,F_0:\widehat{T}\to T$ \ be the functions defined by the diagrams \eqref{diagramaF} and \eqref{diagramaF0}, where \ $T:=\big(\mip\big)/\widetilde{V}$. \ Then \ $F$ \ is homotopic to \ $F_0$, \ and \ $F_0$ \ is a homeomorphism between the $(r+1)$-tori \ $\widehat{T}$ \ and \ $T$.
\end{lemma}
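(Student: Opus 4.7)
The plan is to handle the two assertions of the lemma separately. For the homeomorphism claim, I would note that the discussion preceding the statement has essentially already done the work: the argument using the auxiliary map $f_1$ together with Lemma~\ref{reguladorconsigno} shows that $F_0$ is a continuous bijection, and a continuous bijection from a compact space ($\widehat{T}$) to a Hausdorff space ($T$) is automatically a homeomorphism. This also justifies treating $T$ as an $(r+1)$-torus.

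For the homotopy I would use the straight-line interpolation
$$g_\lambda(x) := \lambda f(x) + (1-\lambda) f_0(x) \qquad (x\in D,\ \lambda\in[0,1]),$$
mirroring exactly the cubic-case construction sketched in the introduction. The only substantive obstacle — and the principal one of the entire proof — is verifying $g_\lambda(x)\in\mip$ for all $x$ and $\lambda$, since $\mip=\C^*\times\R_+^{r-1}$ is \emph{not} a convex subset of $\C\times\R^{r-1}$. This is precisely why the author developed Lemmas~\ref{afindentrosemiplano} and \ref{f0dentrosemiplano}: on each simplex $D_\mu$ with $\mu=(\sigma,q,n)$, both $f(D_\mu)$ and $f_0(D_\mu)$ lie in the common region
$$\bigl(f_{\rho_\sigma(q),\sigma}^{(1)}\cdot\s_{Na(\rho_\sigma(q),\sigma,n)}\bigr)\times\R_+^{r-1},$$
which is a product of convex sets (the $\s_t$ are convex since $N\ge 3$) sitting inside $\mip$. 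Convexity now does all the work: $g_\lambda(x)$ lies in the same region, hence in $\mip$.

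Finally, I would descend $g$ to a continuous homotopy $G:\widehat{T}\times[0,1]\to T$. Continuity of $g$ on $D$ follows exactly as the well-definedness of $f$ was established in Proposition~\ref{Escher}~(i), and continuity of $f_0$ is immediate from \eqref{f0}. Both $f$ and $f_0$ satisfy the periodicity relations in parts (ii) and (iii) of Proposition~\ref{Escher} ($f_0$ directly from its definition), and these relations are clearly preserved under convex combinations; hence each $g_\lambda$ satisfies $g_\lambda(x+e_{r+1})=g_\lambda(x)$ and $g_\lambda(x+e_j+\beta e_{r+1})=\widetilde\varepsilon_j\cdot g_\lambda(x)$ wherever these identifications apply. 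Therefore $g$ factors through the quotients to give the required $G$ with $G(\cdot,1)=F$ and $G(\cdot,0)=F_0$, completing the proof.
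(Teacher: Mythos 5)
Your proposal follows essentially the same route as the paper: the homeomorphism claim is handled exactly by invoking the discussion around Lemma~\ref{reguladorconsigno} (continuous bijection from compact to Hausdorff), and the homotopy is the same straight-line interpolation, with well-definedness in $\mip$ secured simplex-by-simplex via Lemmas~\ref{afindentrosemiplano} and \ref{f0dentrosemiplano} and the convexity of the $\s_t$, then descended to the tori by checking the periodicity relations of Proposition~\ref{Escher}. No gaps; this matches the paper's proof.
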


\begin{proof}
By the discussion following Lemma~\ref{reguladorconsigno}, we have only to show that $F$ and $F_0$ are homotopic. For $x\in D$ and $\lambda\in [0,1]$, consider $f_\lambda(x):=\lambda f(x)+(1-\lambda)f_0(x)\in \C\times \R_+^{r-1}$. Since $x\in D$, there exists $\mu=(\sigma,q,n)\in \widetilde{S}_r$ such that $x\in D_\mu$. Using Lemma~\ref{afindentrosemiplano} and Lemma~\ref{f0dentrosemiplano}, we have that $f(x)^{(1)}$ and $f_0(x)^{(1)}$ lie in $f_{\rho_\sigma(q),\sigma}^{(1)}\cdot\s_{Na(\rho_\sigma(q),\sigma,n)}\subset \C^*$. But $\s_{Na(\rho_\sigma(q),\sigma,n)}$ is a convex set, so
$$f_\lambda(x):=\lambda f(x)+(1-\lambda)f_0(x)\subset \mip.$$
Hence, we can define $f_\lambda:D\to \mip$ by $f_\lambda(x)=\lambda f(x)+(1-\lambda)f_0(x)$. Clearly, $(\lambda,x)\mapsto f_\lambda(x)$ is continuous.

Suppose $x\in D$ and $x+e_{r+1}\in D$. Then, using Lemma~\ref{Escher}~(ii) and \eqref{f0}, we have
\begin{equation*}
f_\lambda(x+e_{r+1})=(1-\lambda)f_0(x+e_{r+1})+\lambda f(x+e_{r+1})
=(1-\lambda)f_0(x)+\lambda f(x)
=f_\lambda(x).
\end{equation*}

Now suppose $x\in D$, and $x+e_j+\beta e_{r+1}\in D$ for some standard basis vector $e_j$ of $\R^{r+1}$ distinct from $e_{r+1}$, and some $\beta\in\Z$. Then, using Lemma~\ref{Escher}~(iii) and \eqref{f0}, we have
\begin{align*}
f_\lambda(x+e_j+\beta e_{r+1})&=(1-\lambda)f_0(x+e_j+\beta e_{r+1})+\lambda f(x+e_j+\beta e_{r+1})\\
&=(1-\lambda)\widetilde{\varepsilon}_j f_0(x)+\lambda \widetilde{\varepsilon}_j f(x)=\widetilde{\varepsilon}_j f_\lambda(x).
\end{align*}
Therefore, $f_\lambda$ descends to a homotopy $F_\lambda:\widehat{T}\to T$ between $F_0$ and $F$.
\end{proof}

We end this section with some computations which we will need when we determine the local and global degrees of $F$ and $F_0$.

\begin{lemma}
Consider \ $\C\times\R^{r-1}=\R^{r+1}$ \ as a real vector space. For \ $\mu\in \widetilde{S}_r$, \ let \ $L_\mu:\R^{r+1}\to \R^{r+1}$ \ be the linear part of the affine map \ $A_\mu$ \ defined in \eqref{afin}. That is, \ $L_\mu$ \ is the unique $\R$-linear map such that \ $A_\mu-L_\mu$ \ is constant. Then,
\begin{equation}\label{detafin}
\mathrm{sign}\big(\!\det(L_{\mu})\big)=(-1)^{r+1}\mathrm{sgn}(\sigma)
\cdot\mathrm{sign}\big(\!\det(f_{1,\mu},f_{2,\mu},
\dots,f_{r+2,\mu})\big),
\end{equation}
where \ $\det(L_{\mu})$ \ is the determinant of \ $L_\mu$, \ and \ $\det(f_{1,\mu},f_{2,\mu},
\dots,f_{r+2,\mu})$ \ is the determinant of the \ $(r+2)\times (r+2)$ \ matrix having columns \ $f_{i,\mu}$.

On the other hand, if \ $P$ \ is an interior point of the set \ $D$ \ defined in \eqref{D}, then
\begin{equation}\label{detf0}
\mathrm{sign}\Big(\!\det\!\big(\mathrm{d}f_{0_P}\big)\Big)=
(-1)^{r+1}\mathrm{sign}\Big(\!\det\!\big(\LOG(\widetilde{\varepsilon}_1),\dots,\LOG(\widetilde{\varepsilon}_r)\big)\Big),
\end{equation}
where \ $\det\!\big(\mathrm{d}[f_0]_P\big)$ \ is the Jacobian at \ $P$ \ of the function \ $f_0:D\to \R^{r+1}$ \ defined in \eqref{f0}, \ $\LOG:\mip\to\R^r$ \ is defined by \ $\big(\LOG(x)\big)^{(j)}:=\log|x^{(j)}|$\  ($1\leq j\leq r$), \ and \ $\det\!\big(\LOG(\widetilde{\varepsilon}_1),\dots,\LOG(\widetilde{\varepsilon}_r)\big)$ \ is the determinant of the \ $r\times r$ \ matrix having columns \ $\LOG(\widetilde{\varepsilon}_i)$.
\end{lemma}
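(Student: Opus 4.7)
The two identities \eqref{detafin} and \eqref{detf0} are independent sign computations on explicit matrices.

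For \eqref{detafin}, the plan is to use that $A_\mu$ is affine and sends $\phi_{t,\mu}\mapsto\varphi_{t,\mu}:=\ell(f_{t,\mu})$. Its linear part $L_\mu$ therefore maps the basis $\{\phi_{t,\mu}-\phi_{r+2,\mu}\}_{t=1}^{r+1}$ (independent by Lemma~\ref{phiind}) to $\{\varphi_{t,\mu}-\varphi_{r+2,\mu}\}_{t=1}^{r+1}$, so
$$\det(L_\mu)=\frac{\det\bigl(\varphi_{1,\mu}-\varphi_{r+2,\mu},\,\ldots,\,\varphi_{r+1,\mu}-\varphi_{r+2,\mu}\bigr)}{\det\bigl(\phi_{1,\mu}-\phi_{r+2,\mu},\,\ldots,\,\phi_{r+1,\mu}-\phi_{r+2,\mu}\bigr)}.$$
For the numerator, I would establish the projective identity
$$\det(v_1,\ldots,v_{r+2})=\Bigl(\prod_{t=1}^{r+2}v_t^{(r+1)}\Bigr)\det\bigl(\ell(v_1)-\ell(v_{r+2}),\,\ldots,\,\ell(v_{r+1})-\ell(v_{r+2})\bigr),$$
proved by dividing the $t$-th column of the $(r+2)\times(r+2)$ left matrix by $v_t^{(r+1)}$, subtracting the last column from the others, and expanding along the resulting last row $(0,\ldots,0,1)$. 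Applied with $v_t=f_{t,\mu}$ and using $f_{t,\mu}^{(r+1)}>0$, this shows the numerator has the same sign as $\det(f_{1,\mu},\ldots,f_{r+2,\mu})$.

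For the denominator, I would augment the columns $\phi_{t,\mu}$ with a bottom row of $1$'s to obtain an $(r+2)\times(r+2)$ matrix whose determinant still equals the denominator (subtract the last column from the others and expand along the bottom row). Permuting its first $r$ rows by $\sigma^{-1}$ (contributing $\mathrm{sgn}(\sigma)$) converts the top-left $r\times(r+1)$ block into the staircase indicator $(\mathbf{1}_{k<t})_{k,t}$; column $r+2$ then differs from column $\rho_\sigma(q)$ only in the $(r+1)$-th entry, by $a(\rho_\sigma(q),\sigma,n+1)-a(\rho_\sigma(q),\sigma,n)=1/N$ (using $\rho_\sigma(q)\not\prec_\sigma\rho_\sigma(q)$ and \eqref{aym}). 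Subtracting column $\rho_\sigma(q)$ from column $r+2$, expanding along the resulting new column $r+2$ (sign $(-1)^{2r+3}$), and cyclically moving the bottom row of $1$'s to the top of the remaining $(r+1)\times(r+1)$ minor (sign $(-1)^{r}$) leaves an upper-triangular matrix with $1$'s on the diagonal. The denominator therefore equals $\mathrm{sgn}(\sigma)(-1)^{r+1}/N$, and combining with the numerator computation yields \eqref{detafin}.

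For \eqref{detf0}, I would compute the Jacobian of $f_0$ at $P$ directly from \eqref{f0}. Writing $f_0(x)^{(1)}=R(x)\bigl(\cos\omega(x)+i\sin\omega(x)\bigr)$ with $R(x)>0$ and $\omega$ as in \eqref{omega}, and $f_0(x)^{(j)}=u_j(x)>0$ for $j\ge2$, the chain rule factors
$$df_{0_P}=\mathrm{diag}\bigl(R,R,u_2,\ldots,u_r\bigr)\cdot\begin{pmatrix}\cos\omega&-\sin\omega&0\\ \sin\omega&\cos\omega&0\\ 0&0&I_{r-1}\end{pmatrix}\cdot M,$$
where $M$ has first row $(\log|\widetilde{\varepsilon}_1^{(1)}|,\ldots,\log|\widetilde{\varepsilon}_r^{(1)}|,0)$, second row $(\arg\widetilde{\varepsilon}_1^{(1)},\ldots,\arg\widetilde{\varepsilon}_r^{(1)},2\pi)$, and remaining rows $(\log|\widetilde{\varepsilon}_1^{(j)}|,\ldots,\log|\widetilde{\varepsilon}_r^{(j)}|,0)$ for $j=2,\ldots,r$. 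The diagonal factor is positive and the rotation block has determinant $1$, so the sign of $\det(df_{0_P})$ equals that of $\det M$. Expanding $\det M$ along the last column, whose only nonzero entry is $2\pi$ in row $2$, gives $(-1)^{r+3}\cdot 2\pi\cdot\det(\LOG(\widetilde{\varepsilon}_1),\ldots,\LOG(\widetilde{\varepsilon}_r))$, and $(-1)^{r+3}=(-1)^{r+1}$ yields \eqref{detf0}.

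The main delicate step is the sign bookkeeping in the denominator computation for \eqref{detafin}: one must verify that column $r+2$ and column $\rho_\sigma(q)$ of the augmented matrix differ only in the single entry $1/N$, and then correctly combine the three sign contributions from the $\sigma^{-1}$ row permutation, the column expansion at position $(r+1,r+2)$, and the cyclic permutation triangularizing the final minor. Once the projective identity for the numerator is in hand, the rest is routine, and \eqref{detf0} is a direct Jacobian calculation.
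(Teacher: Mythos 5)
Your proposal is correct, and for \eqref{detf0} it is essentially identical to the paper's argument (polar change of coordinates, diagonal factor plus rotation block, expansion along the last column). For \eqref{detafin} the endpoint is the same but the route is organized differently, and the difference is worth noting. The paper computes $L_\mu$ directly on the standard basis of $\R^{r+1}$, using $\phi_{r+2,\mu}-\phi_{\rho_\sigma(q),\mu}=(1/N)e_{r+1}$ and Definition~\ref{phi} to express $L_\mu(e_{r+1})$ and $L_\mu(e_{\sigma(t)})$ in terms of the $\varphi$'s, then composes with the permutation matrix $P_{\overline\sigma}$ and performs column operations until the footnote's projective identity applies; in that approach the ``denominator'' determinant is never computed explicitly because it is absorbed into the basis change. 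You instead write $\det(L_\mu)$ as the ratio of $\det(\varphi_{t,\mu}-\varphi_{r+2,\mu})$ to $\det(\phi_{t,\mu}-\phi_{r+2,\mu})$ and evaluate the denominator head-on via the augmented $(r+2)\times(r+2)$ matrix with a bottom row of ones, the $\sigma^{-1}$ row permutation, and the observation that columns $\rho_\sigma(q)$ and $r+2$ differ only by $(1/N)e_{r+1}$; the subsequent cofactor expansion and cyclic row shift yield exactly $\mathrm{sgn}(\sigma)(-1)^{r+1}/N$. The sign bookkeeping you flag as delicate does check out: the cofactor sign $(-1)^{(r+1)+(r+2)}=-1$, the cycle of length $r+1$ contributing $(-1)^r$, and the row permutation contributing $\mathrm{sgn}(\sigma^{-1})=\mathrm{sgn}(\sigma)$ combine to the stated value. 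Your projective identity is anchored at the last column (picking up no cofactor sign) rather than the first column as in the paper's footnote (which picks up $(-1)^{r+1}$), but this is compensated by the $(-1)^{r+1}$ in your denominator, so the final result agrees. Your route has the mild advantage of isolating where the factor $(-1)^{r+1}\mathrm{sgn}(\sigma)$ actually comes from -- the relative orientation of the simplex $\{\phi_{t,\mu}\}$ inside $\R^{r+1}$ -- while the paper's route is more computational but avoids computing the $\phi$-determinant explicitly.
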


\begin{proof}
First let us prove \eqref{detafin}. We have
$$L_\mu(x)=A_\mu(x+\phi_{r+2,\mu})-A_\mu(\phi_{r+2,\mu})\qquad(x\in\R^{r+1}).$$
Using Definition~\eqref{afin}, we have
\begin{equation}\label{63}
L_\mu(\phi_{t,\mu}-\phi_{r+2,\mu})=\varphi_{t,\mu}-\varphi_{r+2,\mu}\qquad (1\leq t\leq r+1).
\end{equation}
Now we compute the values of $L_{\mu}$ on the standard basis $\{e_j\}_{1\leq j\leq r+1}$ of $\R^{r+1}$. Since $\phi_{r+2,\mu}-\phi_{\rho_\sigma(q),\mu}=(1/N)e_{r+1}$, putting $t=\rho_\sigma(q)$ in \eqref{63} we conclude that
\begin{equation}\label{64}
L_\mu(e_{r+1})=N(\varphi_{r+2,\mu}-\varphi_{\rho_\sigma(q),\mu}).
\end{equation}
From Definition~\ref{phi}, we have
$$e_{\sigma(t)}=(\phi_{t+1,\mu}-\phi_{r+2,\mu})-(\phi_{t,\mu}-\phi_{r+2,\mu})-(\phi_{t+1,\mu}^{(r+1)}-\phi_{t,\mu}^{(r+1)})e_{r+1}\qquad (1\leq t\leq r).$$
Using \eqref{63} and \eqref{64} we have then
\begin{equation}\label{65}
L_\mu(e_{\sigma(t)})=\varphi_{t+1,\mu}-\varphi_{t,\mu}-N(\phi_{t+1,\mu}^{(r+1)}-\phi_{t,\mu}^{(r+1)})(\varphi_{r+2,\mu}-\varphi_{\rho_\sigma(q),\mu})\qquad(1\leq t\leq r).
\end{equation}

Let $P_{\overline{\sigma}}:\R^{r+1}\to\R^{r+1}$ be the linear map determined by $P_{\overline{\sigma}}(e_t):=e_{\overline{\sigma}(t)}$, where $\overline{\sigma}\in S_{r+1}$ is defined by $\overline{\sigma}(r+1):=r+1$, and $\overline{\sigma}(t):=\sigma(t)$ for each $1\leq t\leq r$. Note that $\mathrm{sgn}(\sigma)=\mathrm{sgn}(\overline{\sigma})=\det(P_{\overline{\sigma}})$. We have already proved that
\begin{equation}\label{66}
\mathrm{sgn}(\sigma)\cdot\det(L_\mu)=\det(L_\mu\circ P_{\overline{\sigma}})=\det\!\big(L_\mu(e_{\sigma(1)}),\dots,L_\mu(e_{\sigma(r)}),L_\mu(e_{r+1})\big).
\end{equation}
By \eqref{64} and \eqref{65}, we get that the right-hand side of \eqref{66} equals
$$N\det\!\big(\varphi_{2,\mu}-\varphi_{1,\mu}\,,\,\varphi_{3,\mu}-\varphi_{2,\mu}\,,\,\dots\,,\,\varphi_{r+1,\mu}-\varphi_{r,\mu}\,,\,\varphi_{r+2,\mu}-\varphi_{\rho_\sigma(q),\mu}\big)$$
using elementary column operations. Adding the first column above to the second, then the second to the third, and so on until adding the $(r-1)$-th column to the $r$-th, we find that $\mathrm{sgn}(\sigma)\cdot\det(L_\mu)$ equals
\begin{align*} N\det\!\big(\varphi_{2,\mu}-\varphi_{1,\mu}\,,\,\varphi_{3,\mu}-\varphi_{1,\mu}\,,\,\dots\,,\,\varphi_{r+1,\mu}-\varphi_{1,\mu}\,,\,\varphi_{r+2,\mu}-\varphi_{\rho_\sigma(q),\mu}\big),
\end{align*}
Adding the column $\varphi_{\rho_\sigma(q),\mu}-\varphi_{1,\mu}$ above to the last one, we obtain
\begin{equation}\label{67}
\mathrm{sgn}(\sigma)\cdot\det(L_\mu)=
 N\det\!\big(\varphi_{2,\mu}-\varphi_{1,\mu}\,,\,\dots\,,\,\varphi_{r+1,\mu}-\varphi_{1,\mu}\,,\,\varphi_{r+2,\mu}-\varphi_{1,\mu}\big).
\end{equation}
Since $\varphi_\mu:=\ell(f_{t,\mu})\in \C\times\R^{r-1}=\R^{r+1}$, the $(r+1)\times(r+1)$ determinant in \eqref{67} is related to the $(r+2)\times(r+2)$ determinant in the right-hand side of \eqref{detafin} by the identity
\begin{equation*}
\mathrm{sign}\big(\det(w_1,\dots,w_{r+2})\big)=
(-1)^{r+1}\mathrm{sign}\Big(\!\det\!\big(\ell(w_2)-\ell(w_1),\dots,\ell(w_{r+2})-\ell(w_{1})\big)\Big),
\end{equation*}
valid for any $w_i\in \C\times\R^r=\R^{r+2}$ with $w_i^{(r+1)}>0$ ($1\leq i\leq r+2$). \footnote{To prove this identity, start with the matrix $(w_1,\dots,w_{r+2})$, divide the $i^{\mathrm{th}}$ column (\ie $w_i$) by $w_i^{(r+1)}$ for all $1\le i\le r+2$. This makes no change in the sign of the determinant as $w_i^{(r+1)}>0$. Now subtract the first column from each of the other columns and expand by the last row.}
Combining this with \eqref{67}, we get formula \eqref{detafin}.

To prove \eqref{detf0}, consider $\widetilde{f}_0:D\to \R^{r+1}$ defined by
$$\widetilde{f}_0(x):=\Big(\big|f_0(x)^{(1)}\big| \, , \, \omega(x) \, , \, f_0(x)^{(2)} \, , \, \dots \, , \, f_0(x)^{(r)}\Big) \qquad
(x\in D),$$
where $\omega$ is the $\R$-linear map defined in \eqref{omega}. To compute $\mathrm{sign}\big(\!\det\!(\mathrm{d}[f_0]_P)\big)$, consider the change of coordinates
\begin{align*}
&\mathcal{C}:\R_+\times\big(\omega(P)-\pi\,,\,\omega(P)+\pi\big)\times\R^{r-1}\rightarrow \R^{r+1},\\
&\mathcal{C}(R,\vartheta,x^{(1)},\dots,x^{(r-1)}):=(R\cos\vartheta,R\sin\vartheta,x^{(1)},\dots,x^{(r-1)}).
\end{align*}
Hence, $f_0=\mathcal{C}\circ \widetilde{f}_0$ in some neighborhood of $P$, and $\det\!\big(\mathrm{d}[\mathcal{C}]_{\widetilde{f}_0(P)}\big)=\big|f_0(P)^{(1)}\big|$.
Computing the corresponding partial derivatives, we obtain
\begin{align}
\nonumber
&\det\!\big(\mathrm{d}[f_0]_P\big)=\det\!\big(\mathrm{d}[\mathcal{C}\circ
\widetilde{f}_0]_P\big)=\det\!\big(\mathrm{d}[\mathcal{C}]_{\widetilde{f}_0(P)}\big)\cdot\det\!\big(\mathrm{d}[\widetilde{f}_0]_P\big)=\\
\nonumber
&(-1)^{r+1}2\pi\cdot\det\!\big(\LOG(\widetilde{\varepsilon}_1),\dots,\LOG(\widetilde{\varepsilon}_r)\big)\cdot\big|f_0(P)^{(1)}\big|^2\cdot\prod_{i=2}^{r}f_0(P)^{(i)},
\end{align}
which proves formula \eqref{detf0}.
\end{proof}

\subsection{Degree computations}

The properties used here concerning topological degree theory are summarized in \cite[Proposition 21]{DF1}. Recall the $(r+1)$-tori $\widehat{T}:=D/\sim$ and $T:=\big(\mip\big)/\widetilde{V}$ defined respectively in \eqref{torocanonico} and \eqref{toro}. Also recall
the commutative diagrams
\begin{equation}\label{diagramas}
\begin{CD}
D @>f_0 >> \mip\\
@VV\widehat{\pi}V @VV\pi V\\
\widehat{T} @>F_0>\simeq> T
\end{CD},
\qquad
\qquad \begin{CD}
D @>f>> \mip\\
@VV\widehat{\pi}V @VV\pi V\\
\widehat{T} @>F>{\phantom{\simeq}}> T
\end{CD},
\end{equation}
defining $F_0$ and $F$. In the following, fix an orientation of the real vector space $\C\times\R^{r-1}=\R^{r+1}$, and use it to fix orientations in $\widehat{T}$ and $T$. Since $\widehat{\pi}:D\to \widehat{T}$
restricted to $\stackrel{\circ}{D}$ is a local homeomorphism, and the tori are connected and oriented, we orient $\widehat{T}$ by declaring $\widehat{\pi}$ an orientation-preserving map. Here, the open set $\stackrel{\circ}{D}\,\subset\R^{r+1}$ has the induced orientation. Thus, the local degree
of $\widehat{\pi}$ at any point of $\stackrel{\circ}{D}$ is $+1$. To orient $T$,
give the induced orientation to $\mip\subset\C\times\R^{r-1}=\R^{r+1}$, and orient $T$ by declaring $\pi:\mip\to T$ a local homeomorphism of local degree
$+1$.

\subsubsection{Global degree}\label{gradoglobal}

Let $F:\widehat{T}\to T$ be the map defined in \eqref{diagramaF}. The degree $\deg(F)$ of $F$ is defined since $F$ is a continuous map between compact oriented manifolds. We shall prove that
\begin{equation}\label{signogradoglobal}
\deg(F)=(-1)^{r+1}\mathrm{sign}\big(\det(\Log\ \varepsilon_1 \ ,\dots, \ \Log \ \varepsilon_r)\big).
\end{equation}
To verify this formula, note that the homotopy in Lemma~\ref{homotopicas} shows that $\deg(F)=\deg(F_0)$ \cite[Proposition 21 (6)]{DF1}. So we have only to prove that
$\deg(F_0)$ is given by the right-hand side of \eqref{signogradoglobal}. Since $F_0$ is a homeomorphism between connected manifolds, $\deg(F_0)$ equals
the local degree $\mathrm{locdeg}_{\widehat{\pi}(P)}(F_0)$ of $F_0$ at $\widehat{\pi}(P)$ for any $P\in\,\stackrel{\circ}{D}$. Thus,
$\deg(F_0)=\mathrm{locdeg}_{\widehat{\pi}(P)}(F_0)$ for all $P\in\,\stackrel{\circ}{D}$. From \eqref{diagramas}, we have $ F_0\circ \widehat{\pi}=\pi\circ f_0$, and $f_0$ is a local homeomorphism around $P$. Then for $P$ in the interior $\stackrel{\circ}{D}$ of $D$,
$$\deg(F_0)=\mathrm{locdeg}_{\widehat{\pi}(P)}(F_0)=\ldeg_P(f_0) \qquad (P\in\,\stackrel{\circ}{D})$$
by \cite[Proposition 21 (7)]{DF1} since $\pi$ and $\widehat{\pi}$ has local degree +1. The local degree at $P$ of the local diffeomorphism $f_0$
is given by \eqref{detf0} \cite[Proposition 22]{DF1}. Therefore, \eqref{signogradoglobal} follows from \eqref{lema48}.

\subsubsection{Local degree}

The local degree of $F:\widehat{T}\to T$  can be easily computed at points where $F$ is a local diffeomorphism. If $x$ is an interior point of the simplex $D_{\mu}$, and $w_{\mu}\not=0$, then the local degree $\ldeg_{\widehat{\pi}(x)}(F)$ of $F$ at $\widehat{\pi}(x)$ is defined, and
\begin{equation}\label{gradolocalfacil}
\ldeg_{\widehat{\pi}(x)}(F)=v_{\mu}:=(-1)^{r+1}\mathrm{sgn}(\sigma)\cdot\mathrm{sign}\big(\det(f_{1,\mu}, \ \dots, \  f_{r+2,\mu})\big).
\end{equation}
To verify this formula, using \eqref{diagramas} we have $ F\circ \widehat{\pi}=\pi\circ f$. Since $f$ restricted to $D_{\mu}$ is the bijective affine map $A_{\mu}$ whenever $w_{\mu}\not=0$ (see \cite[Lemma 15]{DF1}), it is clear that $f$ is a local diffeomorphism around $x$. But $\widehat{\pi}$ and $\pi$ are local diffeomorphisms of degree $+1$, so $F$ is a local diffeomorphism around $\widehat{\pi}(x)$. Then
$\ldeg_{\widehat{\pi}(x)}(F)=\ldeg_x(f)$. Finally, using \cite[Proposition 22]{DF1}, we have that \eqref{gradolocalfacil} follows from \eqref{detafin}.

\subsection{Preliminary results}\label{resultadosprevios}

The next lemma shows that the vector $[0,0,\dots,0,1]\in\C\times\R^{r}$ cannot lie in any of the $H_{i,\mu}$
($\mu\in \widetilde{S}_r$), as we mentioned in the remarks after the 7SA \big(see \eqref{hiperplano}\big). As always, we suppose $r>0$.

\begin{lemma}\label{lema49}
Let \ $v_1,v_2,\dots,v_\ell\in k$ \ with \ $\ell<[k:\Q]=r+2$, \ let \ $\tau_j:k\to \C$ \ the $r+2$ distinct embeddings of \ $k$ \ into \ $\C$\  (with \ $\tau_1$ \ and \ $\tau_{r+2}$ \ the non-real embeddings, \ $\overline{\tau}_1=\tau_{r+2}$), \ and define \ $\widetilde{J}:k\to \C\times\R^{r}$ \ by
\ $\big(\widetilde{J}(v)\big)^{(j)}:=\tau_j(v)$ \ for \ $v\in k$ \ and \ $1\leq j\leq r+1$. \ Then \ $e_{r+2}:=[0,0,\dots,0,1]\in \C\times\R^r$ \ does not lie in the $\R$-subspace
$$\R\cdot\widetilde{J}(v_1)+\R\cdot \widetilde{J}(v_2)+\dots+\R\cdot \widetilde{J}(v_\ell) \ \subset
\ \C\times\R^r.$$
\end{lemma}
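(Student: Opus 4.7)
The plan is to argue by contradiction, reducing a statement about $\R$-spans of vectors from $\widetilde{J}(k)$ to a statement about proper $\Q$-subspaces of $k$, which can then be handled by invoking the non-degeneracy of the trace pairing on $k/\Q$.

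First I would observe that $\widetilde{J}$ extends to an $\R$-linear map $\widetilde{J}_\R\colon k_\R := k\otimes_\Q\R \to \C\times\R^r$. Since $\dim_\R k_\R = [k:\Q] = r+2 = \dim_\R(\C\times\R^r)$ and a $\Q$-basis of $k$ maps under $\widetilde{J}$ to an $\R$-linearly independent set (the relevant determinant is a nonzero Vandermonde-type expression, essentially the square root of the discriminant of $k$ up to a factor of $2i$), the map $\widetilde{J}_\R$ is an $\R$-linear isomorphism. Now assume for contradiction that $e_{r+2} = \sum_{i=1}^{\ell} c_i\widetilde{J}(v_i)$ with $c_i\in\R$, and set $V := \Q v_1 + \cdots + \Q v_\ell \subset k$. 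Since $\dim_\Q V \le \ell < r+2 = [k:\Q]$, the subspace $V$ is a proper $\Q$-subspace of $k$; moreover $w:=\widetilde{J}_\R^{-1}(e_{r+2})$ lies in $V\otimes_\Q\R \subset k_\R$.

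Next I would produce a $\Q$-linear functional annihilating $V$ but not $w$. Since $V \subsetneq k$, there exists a nonzero $\phi\in\mathrm{Hom}_\Q(k,\Q)$ with $\phi(V)=0$. Because $k/\Q$ is separable, the trace form is non-degenerate, so we can write $\phi(x) = \mathrm{Tr}_{k/\Q}(\alpha x)$ for some nonzero $\alpha\in k$. Extending by $\R$-linearity gives $\phi_\R\colon k_\R\to\R$ with $\phi_\R = \mathrm{Tr}_{k_\R/\R}(\alpha\,\cdot\,)$, and $\phi_\R$ must vanish on $V\otimes_\Q\R$, hence on $w$.

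To reach the contradiction I would compute $\phi_\R(w)$ using the $\R$-algebra decomposition $k_\R \cong \R^r\times\C$, in which the $j$-th real factor corresponds to $\tau_{j+1}$ (for $1\le j\le r$) and the complex factor to $\tau_1$. In this decomposition $w$ is the idempotent supported at the $\tau_{r+1}$-factor, i.e.\ $(0,\dots,0,1,0)$, and $\alpha$ acts component-wise through $(\tau_2(\alpha),\dots,\tau_{r+1}(\alpha),\tau_1(\alpha))$. Using $\mathrm{Tr}_{k_\R/\R}(x_2,\dots,x_{r+1},z) = x_2+\cdots+x_{r+1}+2\,\mathrm{Re}(z)$, only the $\tau_{r+1}$-slot survives and yields
\[
\phi_\R(w) = \mathrm{Tr}_{k_\R/\R}(\alpha\cdot w) = \tau_{r+1}(\alpha),
\]
which is nonzero because $\alpha\ne 0$ and $\tau_{r+1}$ is an embedding. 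This contradicts $\phi_\R(w)=0$.

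The main obstacle is the bookkeeping between two different identifications of $k_\R\cong\R^{r+2}$: the one used to define $\widetilde{J}$ (which splits the complex place into its real and imaginary parts) and the $\R$-algebra decomposition $\R^r\times\C$ (which is natural for the trace calculation). Once one fixes the correspondence $e_{r+2}\leftrightarrow$ the $\tau_{r+1}$-idempotent, everything else is formal; the injectivity of the embedding $\tau_{r+1}$ is what ultimately carries the proof.
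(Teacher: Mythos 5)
Your argument is correct, but it takes a genuinely different route from the paper. The paper assumes $e_{r+2}=\sum_i c_i\widetilde{J}(v_i)$ with $c_i\in\R$, observes that the vanishing of the first (complex) coordinate, together with the realness of the $c_i$, forces the same vanishing at the conjugate embedding $\tau_{r+2}=\overline{\tau}_1$, and hence that $e_{r+2}\in\C^{r+2}$ would lie in the $\C$-span of the full Minkowski vectors $J(v_1),\dots,J(v_\ell)$; this contradicts~\cite[Lemma 9]{DF1}. In other words, the paper lifts the $\R$-linear problem in $\C\times\R^r$ to a $\C$-linear problem in $\C^{r+2}$ and defers to the totally complex version already proved in~\cite{DF1}. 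You instead work directly inside the $\R$-algebra $k_\R\cong\R^r\times\C$: you identify $\widetilde{J}_\R^{-1}(e_{r+2})$ as the idempotent supported at the $\tau_{r+1}$-place, observe that it must lie in $V\otimes_\Q\R$ for the proper $\Q$-subspace $V=\sum_i\Q v_i$, and then derive a contradiction from the non-degeneracy of the trace form, since $\mathrm{Tr}_{k_\R/\R}(\alpha\cdot w)=\tau_{r+1}(\alpha)\neq 0$ for any nonzero $\alpha$ dual to a functional killing $V$. Your bookkeeping between the two identifications of $k_\R$ with $\R^{r+2}$ is handled correctly, and the argument is self-contained rather than relying on the external lemma. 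The trade-off is essentially one of economy versus transparency: the paper's proof is shorter given that~\cite[Lemma 9]{DF1} is already available and in scope, while your proof exposes the underlying mechanism (trace non-degeneracy, which is also what powers the cited lemma) without any black box.
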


\begin{proof}
Suppose $e_{r+2}$ lies in $\R\cdot\widetilde{J}(v_1)+\dots+\R\cdot \widetilde{J}(v_\ell)$.
This means that there are scalars $c_j\in\R$ such that
$$[0,0,\dots,0,1]=c_1\widetilde{J}(v_1)+\dots+c_\ell\widetilde{J}(v_\ell)\in \C\times\R^r.$$
Using the definition of $\widetilde{J}$, we have $c_1\tau_1(v_1)+\dots+c_\ell\tau_1(v_\ell)=0$ in the first coordinate of the last equation (recall $r>0$). Then, taking the complex conjugate,
$$0=\overline{c_1\tau_1(v_1)+\dots+c_\ell\tau_1(v_\ell)}=c_1\tau_{r+2}(v_1)+\dots+c_\ell\tau_{r+2}(v_\ell).$$
Hence, if we define $J:k\to \C^{r+2}$ by $\big(J(v)\big)^{(1)}:=\tau_1(v)$, $\big(J(v)\big)^{(2)}:=\tau_{r+2}(v)$, and $\big(J(v)\big)^{(j)}:=\tau_{j-1}(v)$
for $v\in k$ and $3\leq j\leq r+2$, we have that the vector $[0,0,\dots,0,1]\in\C^{r+2}$ can be written as $c_1J(v_1)+\dots+c_\ell J(v_\ell)$. But \cite[Lemma 9]{DF1} shows that $[0,0,\dots,0,1]\in\C^{r+2}$ cannot lie in $\C\cdot J(v_1)+\dots+\C\cdot J(v_\ell)$, so we have a contradiction.
\end{proof}

We will prove that $\{C_{\mu},w_{\mu}\}_{w_{\mu}\not=0}$
\big(see \eqref{Csigmaqn} and \eqref{wsigmaqn}\big) is a signed fundamental domain for the action of $V$ on $\mi$ by showing that $\{C_{\mu},w_{\mu}\}_{w_{\mu}\not=0}$ is related to
a signed fundamental domain for the action of $\widetilde{V}$ on $\mip$. For $\mu=(\sigma,q,n)\in
\widetilde{S}_r$, we define
\begin{align*}
 c_{\mu}&:=   \Big\{y\in\mip \ \big| \ y=
 \sum_{t=1}^{r+2}b_t\varphi_{t,\mu},\ \,\sum_{t=1}^{r+2}b_t=1,
 \ \,b_t\in J_{t,\mu}  \Big\}  ,\\
  \nonumber
& \varphi_{t,\mu}:=\ell(f_{t,\mu}),\quad \quad
J_{i,\sigma}:=
\begin{cases} [0,1]   &\mathrm{ if\ }  e_{r+2}\in H_{t,\mu}^+,\\
 (0,1] &\mathrm{ if\ }  e_{r+2}\in H_{t,\mu}^-,
 \end{cases}
\end{align*}
for each $w_{\mu}\not=0$ and $t\in\{1,\dots,r+2\}$, where $f_{t,\mu}\in\mi$ is defined in \eqref{ftmu}. The closure of $c_\mu$ in $\C\times\R^{r-1}$ is
$$
\overline{c}_{\mu}=P(\varphi_{1,\mu},\dots,\varphi_{r+2,\mu})=f(D_{\mu})=A_{\mu}(D_{\mu}),
$$
where $f$ is the function defined in Proposition~\ref{Escher}.

\begin{lemma}\label{relaciondominios}
If \ $\{c_{\mu},w_{\mu}\}_{w_{\mu}\not=0}$ \ satisfies
\begin{equation}\label{75}
\sum_{\substack{\mu\in \widetilde{S}_r\\ w_{\mu}\not=0}}
\,\sum_{z\in c_{\mu}\cap\widetilde{V}\cdot y}w_{\mu}=1\qquad\qquad\big(  y\in\mip\big),
\end{equation}
where the cardinality of \ $c_{\mu}\cap\widetilde{V}\cdot y$ \ is bounded independently of \ $y$, \ then \ $\{C_{\mu},w_{\mu}\}_{w_{\mu}\not=0}$ \ is a signed fundamental domain for the action of \ $V$ \ on \ $\mi$.
\end{lemma}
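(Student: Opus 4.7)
The plan is to transport the hypothesis \eqref{75} about $\widetilde{V}$ acting on $\mip$ into the desired signed fundamental domain equation for $V$ acting on $\mi$, by using the projection $\ell:\mi\to\mip$ defined in \eqref{ell}. The key structural property of $\ell$ is that it intertwines the two actions: $\ell(\varepsilon x)=\widetilde{\varepsilon}\,\ell(x)$ for all $\varepsilon\in V$ and $x\in\mi$. Moreover, the map $\varepsilon\mapsto\widetilde{\varepsilon}$ is a bijection $V\to\widetilde{V}$ by \eqref{unidadnormauno}, and both $V\curvearrowright\mi$ and $\widetilde{V}\curvearrowright\mip$ are free actions (for $V$, a fixed point equation $\varepsilon x=x$ with $x^{(j)}\ne0$ forces $\varepsilon^{(j)}=1$ for each coordinate; for $\widetilde{V}$, this is exactly the content of \eqref{unidadnormauno}).

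The central step will be to prove the bijection
\[
\bigl\{\varepsilon\in V\,\bigm|\,\varepsilon x\in C_\mu\bigr\}\;\longleftrightarrow\;\bigl\{\widetilde{\varepsilon}\in\widetilde{V}\,\bigm|\,\widetilde{\varepsilon}\,\ell(x)\in c_\mu\bigr\}
\]
via $\varepsilon\mapsto\widetilde{\varepsilon}$, for any $x\in\mi$ and any $\mu\in\widetilde{S}_r$ with $w_\mu\ne0$. The idea is this: since $w_\mu\ne0$, Lemma~\ref{lema49} guarantees that $f_{1,\mu},\dots,f_{r+2,\mu}$ are $\R$-linearly independent in $\C\times\R^r$, so any $\varepsilon x$ admits a unique expansion $\varepsilon x=\sum_{t=1}^{r+2}s_t f_{t,\mu}$. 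Applying $\ell$ and using that $f_{t,\mu}^{(r+1)}>0$ and $(\varepsilon x)^{(r+1)}>0$, one obtains
\[
\ell(\varepsilon x)=\sum_{t=1}^{r+2}b_t\varphi_{t,\mu},\qquad b_t:=\frac{s_t f_{t,\mu}^{(r+1)}}{(\varepsilon x)^{(r+1)}},\qquad \sum_t b_t=1,
\]
where each $b_t\ge 0$ iff $s_t\ge0$, and $b_t>0$ iff $s_t>0$. The reverse passage (from a convex expansion of $\widetilde{\varepsilon}\,\ell(x)$ against the $\varphi_{t,\mu}$ back to a cone expansion of $\varepsilon x$ against the $f_{t,\mu}$) is given by the inverse formula $s_t=b_t(\varepsilon x)^{(r+1)}/f_{t,\mu}^{(r+1)}$, again with matching signs. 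Since the defining boundary conditions $\R_{t,\mu}$ of $C_\mu$ and $J_{t,\mu}$ of $c_\mu$ are both dictated by the single criterion $e_{r+2}\in H_{t,\mu}^+$ vs.\ $e_{r+2}\in H_{t,\mu}^-$, the signed conditions $s_t\in\R_{t,\mu}$ and $b_t\in J_{t,\mu}$ correspond perfectly, so $\varepsilon x\in C_\mu\iff\widetilde{\varepsilon}\,\ell(x)\in c_\mu$.

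Combining the bijection above with the freeness of both actions yields the cardinality identity $|C_\mu\cap V\cdot x|=|c_\mu\cap\widetilde{V}\cdot\ell(x)|$ for every $x\in\mi$ and every $\mu$ with $w_\mu\ne0$; in particular, the hypothesized uniform bound on $|c_\mu\cap\widetilde{V}\cdot y|$ transfers to a uniform bound on $|C_\mu\cap V\cdot x|$. Multiplying by $w_\mu$, summing over $\widetilde{S}_r$, and applying the hypothesis \eqref{75} with $y=\ell(x)$ then produces the signed fundamental domain equation \eqref{Basic}. The only delicate point is the careful bookkeeping of boundary conditions, but this reduces to the single observation that $\R_{t,\mu}$ and $J_{t,\mu}$ are defined by the same half-space dichotomy; everything else is linear algebra driven by the $\R$-linear independence of the $f_{t,\mu}$.
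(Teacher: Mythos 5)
Your argument is correct and is essentially the proof the paper delegates to \cite[Proposition~10]{DF1}: it uses only the underlying $\R$-vector-space structure, transporting cone membership along the projection $\ell$ via the scaling $b_t=s_tf_{t,\mu}^{(r+1)}/(\varepsilon x)^{(r+1)}$, which preserves both the sign and the strict/weak dichotomy of coordinates and hence matches $\R_{t,\mu}$ with $J_{t,\mu}$. One small slip: the $\R$-linear independence of $f_{1,\mu},\dots,f_{r+2,\mu}$ comes directly from $w_\mu\ne0$ (the determinant in \eqref{wsigmaqn} being nonzero), not from Lemma~\ref{lema49}, whose role is rather to guarantee that $e_{r+2}\notin H_{i,\mu}$ so that the half-space dichotomy in step~7 is well posed; this does not affect the validity of your proof.
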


\begin{proof}
The proof in \cite[Proposition 10]{DF1} works in our case as it only involves the underlying real vector space structure.
\end{proof}

Define
\begin{equation}\label{B}
\mathcal{B}:=\bigcup_{\mu\in \widetilde{S}_r}\mathcal{B}_\mu, \qquad\qquad \mathcal{B}_\mu:=\bigcup_{\widetilde{\varepsilon}\in \widetilde{V}}\widetilde{\varepsilon}\cdot\partial\overline{c}_{\mu},
\end{equation}
where $\partial\overline{c}_{\mu}$ is the boundary of $c_\mu$ in $\C\times\R^{r-1}$. Note that $\overline{c}_\mu\subset\mathcal{B}$ when $w_\mu=0$, for then $\overline{c}_\mu$ coincides with its boundary $\partial\overline{c}_{\mu}$.

Now define $J_\mu(y)\subset \widetilde{V}$ as
$$J_\mu(y):=\{\widetilde{\varepsilon}\in \widetilde{V} \,|\, \widetilde{\varepsilon}\cdot y\in c_\mu \}\qquad\qquad (y\in\mip; \ \mu\in \widetilde{S}_r).$$
Then we have the following lemma.

\begin{lemma}\label{aproximaciongradoslocales}
For any \ $y\in\mip$ \ and \ $\mu\in \widetilde{S}_r$, \ there exists \ $T_\mu(y)\in(0,1)$ \ such that \ $T_\mu(y)\le t<1$ \ implies \ $J_\mu(y)=J_\mu(ty)$ \ and \ $ty\not\in\mathcal{B}_\mu$.
\end{lemma}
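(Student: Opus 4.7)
The plan is to exploit the properly discontinuous action of $\widetilde{V}$ on $\mip$ (established by Lemma~\ref{homotopicas}, where $T:=\mip/\widetilde{V}$ is shown to be an $(r+1)$-torus) in order to reduce the problem to a finite combinatorial analysis of how the ray $\{ty : t\in(0,1]\}$ meets the boundary of $\overline{c}_\mu$ and of its $\widetilde{V}$-translates. Applying proper discontinuity to the compact set $\{ty : t\in[1/2,1]\}\cup\overline{c}_\mu$ yields a finite set
\[E:=\{\widetilde{\varepsilon}\in\widetilde{V}\,|\, \widetilde{\varepsilon}\cdot ty\in\overline{c}_\mu\text{ for some }t\in[1/2,1]\},\]
and only $\widetilde{\varepsilon}\in E$ can contribute either to $J_\mu(ty)$ or to placing $ty$ inside $\mathcal{B}_\mu$ when $t\in[1/2,1]$.

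Next, I would show that no affine hyperplane $h_i\subset\R^{r+1}$ spanned by $r+1$ of the $r+2$ vertices $\varphi_{t,\mu}=\ell(f_{t,\mu})$ of the simplex $\overline{c}_\mu$ passes through the origin. Any affine relation $\sum_{t\neq i}c_t\varphi_{t,\mu}=0$ with $\sum c_t=1$ becomes, after setting $c_t':=c_t/f_{t,\mu}^{(r+1)}$, the identity $\sum_{t\neq i}c_t' f_{t,\mu}=e_{r+2}$ in $\R^{r+2}$, which contradicts Lemma~\ref{lema49}. Because each $\widetilde{\varepsilon}\in\widetilde{V}$ acts on $\R^{r+1}$ by a linear map fixing the origin, each translate $\widetilde{\varepsilon}\cdot h_i$ also avoids the origin, so the line $\{ty:t\in\R\}$ meets it in at most one point. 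Consequently $\{t\in[1/2,1) : ty\in\mathcal{B}_\mu\}$ is a finite set, and choosing $T_\mu(y)$ strictly between its supremum and $1$ already guarantees $ty\not\in\mathcal{B}_\mu$ for every $t\in[T_\mu(y),1)$.

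To establish $J_\mu(y)=J_\mu(ty)$ for such $t$, the plan is to treat each $\widetilde{\varepsilon}\in E$ separately: the cases $\widetilde{\varepsilon}y\in c_\mu^\circ$ and $\widetilde{\varepsilon}y\not\in\overline{c}_\mu$ are immediate by continuity. In the delicate case $\widetilde{\varepsilon}y\in\partial\overline{c}_\mu$, I would expand $\widetilde{\varepsilon}y=\sum_s b_s\varphi_{s,\mu}$ with $\sum b_s=1$, $b_s\ge 0$, and $b_{i_j}=0$ precisely for $j=1,\dots,k$, and use the unique affine relation $\sum_t\beta_t\varphi_{t,\mu}=0$ with $\sum\beta_t=1$ to compute the barycentric coordinates $b_s'(t)=tb_s+(1-t)\beta_s$ of $\widetilde{\varepsilon}(ty)$. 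For $t<1$ close to $1$, each $b_s'(t)$ with $b_s>0$ remains positive, while $b_{i_j}'(t)=(1-t)\beta_{i_j}$ has the sign of $\beta_{i_j}$, which is nonzero thanks to the previous paragraph. Setting $\gamma_t:=\beta_t/f_{t,\mu}^{(r+1)}$ yields $\sum_t\gamma_t f_{t,\mu}=e_{r+2}$, so $\mathrm{sgn}(\beta_i)=\mathrm{sgn}(\gamma_i)$, and by the definition of $H_{i,\mu}^+$ we obtain $\beta_i>0\Leftrightarrow e_{r+2}\in H_{i,\mu}^+\Leftrightarrow J_{i,\mu}=[0,1]$. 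Hence $\widetilde{\varepsilon}y\in c_\mu$ iff every $\beta_{i_j}>0$, iff $\widetilde{\varepsilon}(ty)\in c_\mu^\circ\subset c_\mu$ for $t$ just below $1$, as required. Replacing $T_\mu(y)$ by a larger value (still less than $1$) to handle all finitely many $\widetilde{\varepsilon}\in E$ completes the argument.

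The main obstacle is exactly this last sign-matching: the open/closed face structure of $c_\mu$ recorded by the intervals $J_{t,\mu}$ must be shown to be precisely compatible with the inward/outward direction of the radial scaling $y\mapsto ty$ at each boundary face of $\overline{c}_\mu$, and Lemma~\ref{lema49} is what forces the relevant coefficients $\beta_{i_j}$ to be nonzero so that the sign comparison is well posed.
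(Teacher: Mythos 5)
The paper itself gives no argument: it simply remarks that the proof of \cite[Lemma~25]{DF1} applies verbatim, so you are reconstructing what that proof must be. Your reconstruction is correct and uses the same ingredients that appear in the surrounding text (proper discontinuity coming from $T=\big(\mip\big)/\widetilde{V}$ being a compact manifold, the analogue Lemma~\ref{lema49} of DF1's Lemma~9, and the sign comparison between barycentric coefficients and the half-space data built into the intervals $J_{t,\mu}$). All the steps check out: the translation from the affine relation $\sum_{t\ne i}c_t\varphi_{t,\mu}=0$ to $\sum_{t\ne i}(c_t/f_{t,\mu}^{(r+1)})f_{t,\mu}=e_{r+2}$ is exact because $\ell$ divides out the positive last coordinate; Lemma~\ref{lema49} then forces every $\beta_t\neq 0$ in the barycentric expansion of $0$; and $\mathrm{sgn}(\beta_i)=\mathrm{sgn}(\gamma_i)$, so $\beta_i>0$ iff $e_{r+2}\in H_{i,\mu}^+$ iff the corresponding face is included in $c_\mu$. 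This is exactly what is needed to match $J_\mu(y)$ with $J_\mu(ty)$ as $t\uparrow 1$.

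One small point worth being careful about: when $w_\mu=0$ the vectors $\varphi_{t,\mu}$ are affinely dependent, so $r+1$ of them need not span a hyperplane and the barycentric analysis does not directly apply. But in that case $c_\mu$ is empty (or undefined), so the $J_\mu$-equality is vacuous, and the same Lemma~\ref{lema49} argument still shows the affine hull of any $\le r+1$ of the $\varphi_{t,\mu}$, hence of all of them, avoids the origin, so $\overline c_\mu=\partial\overline c_\mu$ lies in a $\widetilde V$-translate-invariant family of affine subspaces missing $0$ and the finiteness of $\{t\in[1/2,1):ty\in\mathcal B_\mu\}$ persists. Your wording (``hyperplane spanned by $r+1$ of the vertices'') should be loosened to cover this degenerate case, but the substance of the argument already does.
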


\begin{proof}
Again the proof in \cite[Lemma 25]{DF1} applies verbatim to our case.
\end{proof}

\subsection{End of the proof}

From Lemma~\ref{relaciondominios}, to establish Theorem~\ref{Main}, we have to prove \eqref{75}, and that for any $\mu\in \widetilde{S}_r$ the set $c_{\mu}\cap\widetilde{V}\cdot y$ is bounded independently of $y\in\mip$. The last part follows using the surjective group homomorphism $\LOG:\mip\to\R^r$ defined in \eqref{LOG}. Indeed, since $\LOG(\overline{c}_{\mu})$ is compact and $\LOG(\widetilde{V})$ is a lattice,
we have only to show that there are no two (distinct) elements $u,\, v\in \widetilde{V}$ such that $\LOG(u\cdot y)=\LOG(v\cdot y)$. But
$\LOG(u\cdot y)=\LOG(v\cdot y)$ implies $|(uv^{-1})^{(j)}|=1$
for all $1\leq j\leq r$. Therefore, since $\widetilde{V}=\ell(V)$, we have that \eqref{unidadnormauno} implies $u=v$.

Note that the above also implies that $J_\mu(y)=\{\widetilde{\varepsilon}\in \widetilde{V} \,|\, \widetilde{\varepsilon}\cdot y\in c_\mu \}$ is finite (possibly empty) for all $\mu\in \widetilde{S}_r$ and $y\in \mip$. Furthermore, we have
\begin{equation}\label{cardinaldeJ}
\sum_{z\in c_\mu\cap \widetilde{V}\cdot y}1= \mathrm{Card}\big(J_\mu(y)\big)\qquad (y\in\mip, \ \mu\in \widetilde{S}_r).
\end{equation}

Now we prove \eqref{75} at a point $y\in(\mip)\smallsetminus\mathcal{B}$, where $\mathcal{B}$ was defined in \eqref{B}. Let $\alpha:=\pi(y)\in T\smallsetminus\pi(\mathcal{B})$. Since $\deg(F)=\pm1\not=0$, we have that $F$ is surjective (see \cite[Proposition 21 (3)]{DF1}). Let $\delta\in F^{-1}(\alpha)\subset \widehat{T}$, and suppose $x\in D$ satisfies $\widehat{\pi}(x)=\delta$. Then $\alpha=F\big(\widehat{\pi}(x)\big)=\pi\big(f(x)\big)$. If $x\in\partial D_\mu$ for some $\mu\in \widetilde{S}_r$, then $f(x)\in f(\partial D_\mu)\subset\partial \overline{c}_\mu\subset\mathcal{B}$, contradicting $\alpha\not\in\pi(\mathcal{B})$. Thus, $x\not\in\partial D_\mu$ for any $\mu \in \widetilde{S}_r$. Similarly, $x\not\in D_\mu$ for any $\mu \in \widetilde{S}_r$ such that $w_\mu=0$. Since $w_\mu\not=0$, the map $f=A_\mu$ gives a bijection between the interior of $D_\mu$ and the interior of $\overline{c}_\mu$. It follows that $f$ is a local homeomorphism in a neighborhood of $x$, as are $\widehat{\pi}$ and $\pi$. Hence $F$ is a local homeomorphism in a neighborhood of $\delta$. Thus, $\delta=\widehat{\pi}(x)$ with $x$ in the interior $\stackrel{\circ}{D}_\mu$ of some $D_\mu$, and $w_\mu\not=0$. Moreover, as $\widehat{\pi}$ restricted to $\stackrel{\circ}{D}$ is a bijection onto its image, there is a unique point $x\in\widehat{\pi}^{-1}(\delta)$. Also, $f(x)$ is in the interior of $\overline{c}_\mu$, so $f(x)\in c_\mu$.

Now we calculate as in \cite{DF1} using \eqref{gradolocalfacil}, the invariance of the degree under homotopy, and the local-global principle of topological degree theory\footnote{Note that $F^{-1}(\alpha)$ is finite since $c_\mu\cap\widetilde{V}\cdot y$ is finite, and since the map $f=A_\mu$ gives a bijection between the interior of $D_\mu$ and the interior of $\overline{c}_\mu$ for all $y\in\mip$ and $\mu\in \widetilde{S}_r$.} \big(see \cite[Proposition 21 (6) and (9)]{DF1}\big),
\begin{align*}
\deg(F)&=\sum_{\delta\in F^{-1}(\alpha)}\ldeg_\delta(F)=\sum_{\substack{\mu\in \widetilde{S}_r\\ w_\mu\not=0}} \ \sum_{\substack{x\in D_\mu\\ \widehat{\pi}(x)\in F^{-1}(\alpha)}}\ldeg_{\widehat{\pi}(x)}(F)\\
&= \sum_{\substack{\mu\in \widetilde{S}_r\\ w_\mu\not=0}} \ \sum_{\substack{x\in D_\mu\\ F(\widehat{\pi}(x))=\alpha}}v_\mu = \sum_{\substack{\mu\in \widetilde{S}_r\\ w_\mu\not=0}} \ \sum_{\substack{x\in D_\mu\\ \pi(f(x))=\pi(y)}}v_\mu = \sum_{\substack{\mu\in \widetilde{S}_r\\ w_\mu\not=0}} \ \sum_{\substack{x\in D_\mu\\ f(x)\in \widetilde{V}\cdot y}}v_\mu\\
&= \sum_{\substack{\mu\in \widetilde{S}_r\\ w_\mu\not=0}} \ \sum_{z\in c_\mu\cap \widetilde{V}\cdot y}v_\mu = \deg(F)\sum_{\substack{\mu\in \widetilde{S}_r\\ w_\mu\not=0}} \ \sum_{z\in c_\mu\cap \widetilde{V}\cdot y}w_\mu,
\end{align*}
since $v_\mu=\deg(F)w_\mu$ by \eqref{wsigmaqn}, \eqref{signogradoglobal} and \eqref{gradolocalfacil}. On dividing both sides by $\deg(F)=\pm 1$, \eqref{75} follows for $y\in(\mip)\smallsetminus\mathcal{B}$.

We can now prove \eqref{75} for any $y\in\mip$. Lemma~\ref{aproximaciongradoslocales} shows the existence of $y_0=y_0(y)\in\mip$ such that $J_\mu(y_0)=J_\mu(y)$ and $y_0\not\in\mathcal{B}_\mu$ for all $\mu\in \widetilde{S}_r$. Thus $y_0\not\in\mathcal{B}:=\cup_\mu\mathcal{B}_\mu$. In particular, we know that \eqref{75} holds for $y_0$. Hence, using \eqref{cardinaldeJ},
\begin{align*}
1 &= \sum_{\substack{\mu\in \widetilde{S}_r\\ w_\mu\not=0}}\sum_{z\in c_\mu\cap \widetilde{V}\cdot y_0}w_\mu = \sum_{\substack{\mu\in \widetilde{S}_r\\ w_\mu\not=0}}w_\mu\mathrm{Card}\big(J_\mu(y_0)\big)\\
&= \sum_{\substack{\mu\in \widetilde{S}_r\\ w_\mu\not=0}}w_\mu\mathrm{Card}\big(J_\mu(y)\big) = \sum_{\substack{\mu\in \widetilde{S}_r\\ w_\mu\not=0}}\sum_{z\in c_\mu\cap \widetilde{V}\cdot y}w_\mu.
\end{align*}

\end{document}